\newcommand{\F}{\mathcal{F}}
\newcommand{\Hilbert}{\mathbb{H}}
\newcommand{\Proba}{\mathbb{P}}
\newcommand{\Borel}{\mathcal{P}}
\newcommand{\Y}{\mathbb{Y}}
\newcommand{\Kilbert}{\mathbb{K}}
\newcommand{\Ex}{\mathbb{E}}
\newcommand{\Real}{\mathbb{R}}
\newcommand{\Linear}{\mathcal{L}}
\title{\textbf{On weighted pseudo almost automorphic mild solutions for some mean field stochastic evolution equations.  }}
\author{\fontsize{10pt}{20pt}\selectfont  Moustapha Dieye $^{(1,5)}$\footnote{E-mail address: mdieye@ept.sn}, Amadou Diop $^{(2)}$\footnote{E-mail address: diop.amadou@ugb.edu.sn}, Mamadou Moustapha Mbaye $^{(4)}$\footnote{E-mail address: mamadoumoustapha3.mbaye@ucad.edu.sn} \, and  Mark  A. McKibben $^{(3)}$\footnote{E-mail address: mmckibben@wcupa.edu
}
}
\date{\today}
\begin{document}

	\newtheorem{theorem}{Theorem}[section]
	\newtheorem{corollary}[theorem]{Corollary}
	\newtheorem{lemma}[theorem]{Lemma}
	\newtheorem{example}{Example}[section]
	\newtheorem{definition}{Definition}[section]
	\newtheorem{remark}{Remark}[section]
	\renewenvironment{proof}{$\mathbf{Proof. \quad}$}{\begin{flushright}$\checkmark$\end{flushright}}

	\maketitle
	\noindent

{\fontsize{7.5pt}{10pt}\selectfont
\begin{tabular}{ll}
$^1$& École Polytechnique de Thiés, Département Tronc commun, Thiés BP A10, Sénégal \\
$^2$ &Universit\'e Gaston Berger de Saint-Louis, \,UFR SAT, D\'epartement de Math\'ematiques, B. P. 234, \,Saint-Louis S\'en\'egal \\
$^3$ & Department of Mathematics West Chester University, 25 University Avenue, West Chester, PA, 19383, U.S.A. \\
$^4$ &D\'epartement de Math\'ematiques, \,Facult\'{e} des Sciences et Technique, \,Universit\'{e} Cheikh Anta Diop, BP 5005, \,Dakar-Fann, Senegal\\
$^5$ & African Institute for Mathematical Sciences Ghana, P.O. Box LGDTD 20046, Summerhill Estates, East Legon Hills, Santoe, Acrra
\end{tabular}
}
	\abstract{ When the evolution familiy is hyperbolic and satisfies the Acquistapace-Terreni conditions, the existence and uniquenness of an almost automorphic mild solution and a weighted pseudo almost automorphic mild solution in distribution of  mean-filed nonautonomous stochastic evolution equations driven by fractional Brownian motion is proved. Examples illustrating the main results are included.				\\\\\\
		\textbf{Keywords:}  Stochastic processes, stochastic evolution equations, Pseudo automorphic solutions, Fractional Brownian motion, distribution, Mean field.}

	\fontsize{12pt}{20pt}\selectfont

\section{Introduction}
The aim of this work is to study the existence of  mean-square almost automorphic and weighted pseudo almost automorphic mild solutions in distribution to the following class of mean field stochastic evolution equations driven by a fractional Brownian motion in a separable Hilbert space $\Hilbert$ :
\begin{equation}\label{C1}
\begin{array}{rl}
d\vartheta(t)=&A(t)\vartheta(t)\,dt + f(t, \vartheta(t), \Proba_{\vartheta(t)})\,dt + \theta(t, \vartheta(t), \Proba_{\vartheta(t)})\,dW(t)\\\\
 &+\psi(t, \Proba_{\vartheta(t)})\,dB^H(t)\quad  \mbox{for all}\quad t \in \mathbb{R},
\end{array}
\end{equation}
where $\{A(t)\}_{t \in \mathbb{R}}$ is a family of densely-defined closed linear operators satisfying the Acquistapace-Terreni conditions; $\Proba_{\vartheta(t)}$ denotes the probability measures induced by $\vartheta(t)$; $f,\theta$ and $\psi$ are stochastic processes ; $B^H=\big\{B^H_t,  t\in\mathbb{R}\big\}$ is a cylindrical fractional Brownian motion (fBm) with Hurst parameter $H\in (1/2, 1)$ with values in a separable Hilbert space $\mathbb{U}$ and  $W(t)$ is a two-sided and standard one-dimensional Brownian motion on a separable Hilbert space $\mathbb{U}_0$ independent of $B^H$.\\
During the last two decades, stochastic differential equations driven by fractional Brownian motions have been considered extensively.
The main difficulty encountered when studying  the stochastic evolution equation (\ref{C1}) is due- the fact that the fBm is neither a Markov process nor a semimartingale, excepted for $H=\frac{1}{2}$. Thus, the usual stochastic calculus cannot be applied. There are essentially two different ways to define stochastic integrals with respect to fBm.
One developed by Ciesielski, Kerkyacharian and Roynette \cite{CKR} and Z\"ahle \cite{Z} is a path-wise approach that uses the H\"older continuity properties of the sample paths. The otherintroduced by Dereusefond and \"Ust\"unel in \cite{DU}, is the stochastic calculus of variations (Malliavin calculus) for the fBm.
\par As a generalization of almost periodicity, the concept of almost automorphy was introduced by Bochner \cite{B}. For stochastic processes, the notion of distributionally almost automorphy for stochastic processes was considered in the articles \cite{FU1,FU2}. In this context several authors have studied the stochastic periodicity in distribution  and almost automorphic solutions in a distribution sense for stochastic differential equation, for instance see \cite{FU1,FU2,LUI1}. In \cite{CHENLIN}, the  authors Chen and Lin
introduced the concept of the square-mean weighted pseudo almost automorphy, which  is a generalization
of the square-mean pseudo almost automorphy, and established the well-posedness of the square-mean
weighted pseudo almost automorphic solutions  for a general class of non-autonomous stochastic evolution
equations that satisfy either global or local Lipschitz condition,  whereas Kexue and Li \cite{KLi} have established the existence and uniqueness results of almost automorphic solutions in
distribution and weighted pseudo almost automorphic solutions in distribution for
some semilinear nonautonomous stochastic partial differential equations driven by
L\'evy noise.\\
As it is very well known,  the works of Vlasov \cite{Vlasov}, Kac \cite{Kac} and McKean \cite{M1}, mean-field stochastic differential equations also called McKean-Vlasov equations arise from Boltzmann's equation in physics. Such SDEs are used to model weak interactions among particles in a multi-particle system. The current literature on mean-field stochastic differential equations is extensive. Many papers are devoted on the problems of McKean Vlasov differential equations and their application using different methods \cite{M1,M2,M3,M4,M5}.

Motivated by the aforementionned papers, this work focuses on the existence and uniqueness of almost automorphic mild solution and weighted pseudo almost automorphic mild solution in distribution of  McKean-Vlasov nonautonomous stochastic evolution equations driven by fractional Brownian motion of the abstract form Equ.\eqref{C1}.  This result generalizes the one in  Diop et al. \cite{Mbaye} and  Chen and Zhang \cite{CHEN2019}.

\par
This paper is organized as follows. In Sections 2, we briefly recall some basic facts regarding fractional Brownian motion, evolution families, almost automorphic  processes and weighted pseudo almost automorphic processes. In Section 3,  we study the existence and uniquness of mild almost automorphic mild solution for Equ.\eqref{C1}. In section 4, we investigate weighted pseudo almost automorphic mild solutions in distribution for Equ.\eqref{C1}. Finally, in Section 5, we provide  examples to illustrate the basic theory developed in this work.

\section{Preliminaries}
In this section we recall some concepts, results  and notations that will be used in the sequel.
Let $(\Y, d)$ be a separable, complete metric space and $\Borel(\Y)$ be the space of Borel probability measures on $\Y$. For $\mu_1,\mu_2\in \Borel(\Y)$, we define
 \begin{equation}
 	d_{BL}(\mu_1,\mu_2)=\displaystyle\sup_{\|g\|_{BL}\le 1} \left| \int_{\Y}^{}\,g\, d(\mu_1-\mu_2)\right|,
 \end{equation}
 where $g$ are Lipschitz continuous functions on $\Y$ with the norm
 \begin{equation*}
 \begin{array}{rl}
 \|g\|_{L}=&\sup \left\lbrace \dfrac{ |g(k)-g(l)|}{\|k-l\|}\; ;\, k,l\in \Y,\, k\ne l \right\rbrace\\\\
 \|g\|_{BL}=& \max \{\|g\|_{\infty}, \|g\|_{L} \}\;,\; \|g\|_{\infty}:=\displaystyle\sup_{k\in \Y}|g(k)|<\infty.
 \end{array}
 \end{equation*}
 It is known that $d_{BL}$ is a complete metric on $\Borel(\Y)$ which generates the weak topology \cite{MT}. Let $(\Hilbert,\|\cdot\|)$ be a real separable Hilbert spaces. We assume that $(\Omega,\F,(\F_t)_{t\ge 0}, \Proba)$ is a probability space, and $\mathcal{L}^2(\Proba,\Hilbert)$ stand for the space of all $\Hilbert$-valued random variables $\vartheta$ such that $\Ex\|\vartheta\|^2=\displaystyle\int_\Omega\|\vartheta(t)\|^2 d\Proba<\infty$. We denote by $\Proba_\vartheta=\Proba\circ \vartheta^{-1}=\mu(\vartheta)$ the distribution of all random variable $\vartheta: (\Omega,\F,\Proba) \to \Hilbert$. For any $\mu_1,\mu_2\in \Borel(\Hilbert)$,   the $2$-Wasserstein distance is defined by

 $$\mathcal{W}(\mu_1,\mu_2)=\inf\left\lbrace \left[\displaystyle\int_{\Kilbert\times\Kilbert}^{} |u-v|^2\pi(du,dv) \right]^{1/2} ,\pi\in \Borel(\Kilbert\times \Kilbert) \mbox{ with marginals } \mu_1 \mbox{ and } \mu_2\right\rbrace  .$$
 Note that if $\vartheta,\widetilde{\vartheta}\in \mathcal{L}^2(\Proba,\Kilbert)$, then
 $  \mathcal{W}(\Proba_{\vartheta},\Proba_{\widetilde{\vartheta}})\le \left(\Ex\|\vartheta-\widetilde{\vartheta}\|^2 \right)^{1/2} .$

\subsection{Almost automorphic  and weighted pseudo almost automorphic stochastic process }
In this section, we recall some known facts about almost automorphic processes. First, we give the following definitions
\begin{definition}
	A stochastic process $\vartheta : \mathbb{R}\to \mathcal{L}^2(\Proba,\Kilbert)$ is
	\begin{description}
		\item[(a)] $\mathcal{L}^2$-continuous if for any $t^\prime\in\mathbb{R}$, $\quad\displaystyle\lim\limits_{t\to t^\prime} \Ex\| \vartheta(t)-\vartheta(t^\prime)\|=0$,
		\item[(b)] $\mathcal{L}^2$-bounded if $\displaystyle\sup_{t\in \mathbb{R}}\Ex\|\vartheta(t)\|^2<\infty$.
	\end{description}
\end{definition}
We denote by $\mathcal{C}_b(\mathbb{R},\mathcal{L}^2(\Proba,\Kilbert))$ the Banach space of all $\mathcal{L}^2$-continuous and uniformly bounded stochastic processes endowed with the norm $\|\vartheta\|^2_\infty=\sup_{t\in \mathbb{R}}\left( \Ex\|\vartheta(t)\|^2_\Kilbert \right)$.
\begin{definition}
	An $\mathcal{L}^2$-continuous stochastic process $\vartheta : \mathbb{R}\to \mathcal{L}^2(\Proba,\Kilbert)$ is said to be square-mean almost automorphic, if for every sequence of real numbers $\{e^\prime_n\}\subset \mathbb{R}$, there exists a subsequence $\{e_n\}\subset \{e^\prime_n\}$ and a stochastic process $\widehat{\vartheta}: \mathbb{R}\to \mathcal{L}^2(\Proba,\Kilbert)$ such that
	\begin{equation}
	\lim\limits_{n\to \infty} \Ex\| \vartheta(t+e_n)-\widehat{\vartheta}(t)\|^2  \qquad\text{and}\qquad \lim\limits_{n \to \infty} \Ex\|\widehat{\vartheta}(t-e_n)-\vartheta(t)\|^2=0
	\end{equation}
	for each $t\in \mathbb{R}$.
\end{definition}
In the sequel, we denote by $SAA(\mathbb{R};\mathcal{L}^2(\Proba,\Kilbert))$ the collection of all square-mean almost automorphic stochastic processes $\vartheta: \mathbb{R} \to \mathcal{L}^2(\Proba,\Kilbert)$ and define
 \begin{align*}
  &SAA(\mathbb{R} \times \mathcal{L}^2(\Proba,\Kilbert)\times \Borel(\Hilbert),\mathcal{L}^2(\Proba,\Kilbert) )\\
   &= \bigg\{g(\cdot,\vartheta,\Proba_\vartheta) \in SAA(\mathbb{R}, \mathcal{L}^2(\Proba,\Kilbert)):\vartheta \in \mathcal{L}^2(\Proba,\Kilbert), \Proba_\vartheta \in \Borel(\Hilbert))\bigg\}.
 \end{align*}
\begin{definition}
	A continuous stochastic process $\vartheta : \mathbb{R}\to \mathcal{C}(\mathbb{R},\Kilbert)$ is almost automorphic in distribution if every sequence $\{e^\prime_n\}\subset \mathbb{R}$ has a subsequence $\{ e_n \}$ such that for some stochastic process $\widetilde{\vartheta}$ :
	$$
	\displaystyle\lim\limits_{n\to \infty}d_{BL}(\Proba\circ[\vartheta(t+e_n) ]^{-1}\,,\,\Proba\circ[\widetilde{\vartheta}(t)]^{-1})=0\mbox{ and }
	\displaystyle\lim\limits_{n\to \infty}d_{BL}(\Proba\circ [\widetilde{\vartheta}(t-e_n) ]^{-1}\,,\,\Proba\circ[\vartheta(t)]^{-1})=0
	$$
	hold, for each $t\in \mathbb{R}$. That is, the $\Borel(\mathcal{C}(\mathbb{R},\Kilbert))$-valued mapping $t\mapsto\Proba(\vartheta^{-1}(t))$ on $\mathbb{R}$  is almost automorphic.
\end{definition}

Next, we recall some facts of the notion of weighted pseudo almost automorphic process.
Let $\mathcal{M}$ be the set of all functions that are positive and locally integrable over $\mathbb{R}$. For given $q>0$ and $\rho\in \mathcal{M}$, define $$m(q,\rho)=\displaystyle\int_{-q}^{q} \rho(t)dt,$$ and $$\mathcal{M}_\infty=\{\rho\in \mathcal{M} \; :\; \displaystyle\lim\limits_{q\to +\infty} m(q,\rho)=+\infty \}.$$
By $SBC_0(\mathbb{R},\mathcal {L}^2(\Proba,\Kilbert),\rho)$ we denote the collection $\mathcal{L}^2$-bounded and $\mathcal{L}^2$-continuous process $\vartheta(t)$ such that $\displaystyle\lim\limits_{q\to +\infty} \dfrac{1}{m(q,\rho)}\int_{-q}^{q}\Ex\|\vartheta(t)\|^2\rho(t)dt=0$. From \cite{KLi}, it is known that $SBC_0(\mathbb{R},\rho)$ equipped with the norm $\|\vartheta\|_\infty$ is a Banach space.\\
Denote by
\begin{align*}
   &SBC_0(\mathbb{R}\times \mathcal {L}^2(\Proba,\Kilbert)\times \Borel(\Hilbert),\mathcal{L}^2(\Proba,\Kilbert),\rho )\\
   &= \bigg\{g(\cdot,\vartheta,\Proba_\vartheta) \in SBC_0(\mathbb{R}, \mathcal{L}^2(\Proba,\Kilbert),\rho):\mbox{ for any } \vartheta \in \mathcal{L}^2(\Proba,\Kilbert), \Proba_\vartheta \in \Borel(\Hilbert))\bigg\}.
 \end{align*}

\begin{definition} An $\mathcal{L}^2$-continuous stochastic process $\vartheta\, :\, \mathbb{R}\to \mathcal{L}^2(\Proba,\Kilbert)$ is square-mean
	weighted pseudo almost automorphic with respect to $\rho\in \mathcal{M}_\infty$ if it can be decomposed as $\vartheta=\vartheta_1+\phi$, where
	$\vartheta_1 \in SAA(\mathbb{R};\mathcal{L}^2(\Proba,\Kilbert))$ and $\phi\in SBC_0(\mathbb{R},\mathcal {L}^2(\Proba,\Kilbert),\rho)$. The  collection  of  all  square-mean weighted  pseudo  almost  automorphic  processes with  respect  to  $\rho$ is
	denoted by $SWPAA(\mathbb{R},\mathcal {L}^2(\Proba,\Kilbert),\rho)$.
\end{definition}
\begin{definition}
Let $\rho\in \mathcal{M}_\infty$ and $F :\mathbb{R}\times \mathcal {L}^2(\Proba,\Kilbert)\times \Borel(\Hilbert) \rightarrow \mathcal {L}^2(\Proba,\Kilbert)$ be stochastic process. $F$ is square-mean weighted pseudo almost automorphic process in $t\in \mathbb{R}$, for each $\vartheta \in \mathcal {L}^2(\Proba,\Kilbert)$ and $\Proba_\vartheta \in \Borel(\Hilbert),$ if it can be decomposed as
$$F = \vartheta_1 + \phi,$$
where $\vartheta_1 \in SAA(\mathbb{R} \times \mathcal{L}^2(\Proba,\Kilbert)\times \Borel(\Hilbert),\mathcal{L}^2(\Proba,\Kilbert) )$ and $\phi \in SBC_0(\mathbb{R}\times \mathcal {L}^2(\Proba,\Kilbert)\times \Borel(\Hilbert),\mathcal{L}^2(\Proba,\Kilbert),\rho )$. The space of all such stochastic processes is denoted by $SWPAA(\mathbb{R}\times \mathcal {L}^2(\Proba,\Kilbert)\times \Borel(\Hilbert),\mathcal{L}^2(\Proba,\Kilbert),\rho ).$
\end{definition}

\begin{definition}
	Let $\vartheta\, :\, \mathbb{R}\to \mathcal{L}^2(\mathbb{R},\Kilbert)$ be a $\mathcal{L}^2$-continuous stochastic process. $\vartheta$ is weighted pseudo automorphic in distribution with respect to $\rho\in \mathcal{M}_\infty$, if it can be decomposed as $\vartheta=\vartheta_1+\phi$, where $\vartheta_1$ is almost automorphic in distribution and $\phi\in SBC_0(\mathbb{R},\mathcal {L}^2(\Proba,\Kilbert),\rho)$.
\end{definition}

\begin{definition}
A set $\Y$ is translation invariant if for any $\vartheta(t)\in \Y,\; \vartheta(t+s)\in \Y$ for any $s\in \mathbb{R}$.
\end{definition}
We denote $\mathcal{M}^{inv}=\{ \rho \in\mathcal{M}_\infty,\; | \; SBC_0(\mathbb{R},\rho) \mbox{ is translation invariant } \}$.

\begin{definition}\cite{Chen}
  An  $\mathcal{L}^2$-continuous  stochastic  process  $f(t, s)  : \mathbb{R} \times \mathbb{R} \to \mathcal{L}^2(\Proba, \Hilbert)$ is square-mean  bi-almost  automorphic  if  for  every  sequence  of  real  numbers  $\{s^\prime_n\}$,  there  exists  a  subsequence $\{s_n\}$ and a continuous function $g : \mathbb{R} \times \mathbb{R} \to \mathcal{L}^2(\Proba, \Hilbert)$ such that
\begin{equation*}
\lim\limits_{n\to \infty}\Ex\| f(t+s_n, s+s_n)-g(t,s)\|^2=0 \mbox{ and }\lim\limits_{n\to \infty}\Ex\| g(t-s_n, s-s_n)-f(t,s)\|^2=0.
\end{equation*}
\end{definition}
 The collection of all square-mean bi-almost automorphic processes is denoted by $SBAA(\mathbb{R} \times \mathbb{R}, \mathcal{L}^2(\Proba, \Hilbert))$.

\subsection{Fractional Brownian motion}\label{mbf}
Let $(\Omega ,\F,P)$ be a complete probability space and consider the two separable Hilbert spaces $\Kilbert$ and $\Kilbert_1$ such that $\Kilbert\hookrightarrow \Kilbert_1$ and the embedding is a Hilbert-Schmidt operator. Let $\mathcal{Q}$ be the trace class operator that is self-adjoint and positive.
\begin{definition}
A $\Kilbert$-valued Gaussian process $\big\{B^H(t),  t\in\mathbb{R}\big\}$ on $(\Omega, \F, \Proba)$ is a \emph{fractional Brownian
 motion of $\mathcal{Q}$-covariance type with Hurst parameter} $H\in (0, 1)$ (or, more simply, a fractional $\mathcal{Q}$-Brownian motion with Hurst parameter $H$) if
\begin{description}
  \item [(1)] $\Ex\big[B^H(t)\big]=0$ for all $t\in\mathbb{R}$,
  \item [(2)] $\operatorname{cov} (B^H(t), B^H(s))
 =\frac{1}{2}\big (|t|^{2H}+|s|^{2H}-|t-s|^{2H}\big) \mathcal{Q}$ \quad for all $t\in\mathbb{R}$,
  \item [(3)] $\big\{B^H(t),  t\in\mathbb{R}\big\}$ has $\Kilbert$-valued, continuous sample
 paths a.s.-$P$,
\end{description}
where $\operatorname{cov}(X, Y)$ denotes the \emph{covariance operator} for the Gaussian random variables $X$ and $Y$ and $\Ex$ stands for the mathematical expectation on $(\Omega, \F, \Proba)$.
\end{definition}

The existence of a fractional $\mathcal{Q}$-Brownian motion is guaranted in the following result.
\begin{theorem} \cite{P}
Let $H\in(0, 1)$ be fixed and $\mathcal{Q}$ be a linear operator such that
$\mathcal{Q}=\mathcal{Q}^{\star}$ and $\mathcal{Q}\in \mathcal{L}_1(\Kilbert)$, where $\mathcal{L}_1(\Kilbert)$ denotes the space of trace class operators on $\Kilbert$. Then, there exists a fractional $\mathcal{Q}$-Brownian motion with Hurst parameter $H$.
\end{theorem}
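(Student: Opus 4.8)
The plan is to construct $B^H$ explicitly as a random series built from the spectral data of $\mathcal{Q}$ and a family of independent scalar fractional Brownian motions. First I would invoke the spectral theorem: since $\mathcal{Q}=\mathcal{Q}^{\star}$ is positive and of trace class (hence compact), there exist an orthonormal basis $\{e_n\}_{n\ge 1}$ of $\Kilbert$ and a summable sequence of nonnegative eigenvalues $\{\lambda_n\}_{n\ge 1}$, with $\sum_{n\ge 1}\lambda_n=\mathrm{Tr}(\mathcal{Q})<\infty$, such that $\mathcal{Q}e_n=\lambda_n e_n$.

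Next I would take a sequence $\{\beta^H_n\}_{n\ge 1}$ of mutually independent real-valued fractional Brownian motions on $(\Omega,\F,\Proba)$, each with Hurst parameter $H$; their existence follows from verifying that the kernel $R_H(t,s)=\tfrac12\big(|t|^{2H}+|s|^{2H}-|t-s|^{2H}\big)$ is positive semidefinite and then applying the Kolmogorov extension and continuity theorems. I would define the candidate process by $B^H(t):=\sum_{n\ge 1}\sqrt{\lambda_n}\,\beta^H_n(t)\,e_n$. Because $\Ex|\beta^H_n(t)|^2=|t|^{2H}$ and $\sum_n\lambda_n<\infty$, one obtains $\Ex\|B^H(t)\|^2=|t|^{2H}\,\mathrm{Tr}(\mathcal{Q})<\infty$, so the partial sums converge in $\mathcal{L}^2(\Proba,\Kilbert)$ and the series defines a genuine $\Kilbert$-valued random variable for each $t$.

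It then remains to verify the three defining properties. Property (1), mean zero, is immediate from $\Ex[\beta^H_n(t)]=0$. For property (2) I would test the covariance operator against the basis: using independence and orthonormality, $\Ex\big[\langle B^H(t),e_m\rangle\,\langle B^H(s),e_k\rangle\big]=\sqrt{\lambda_m\lambda_k}\,\Ex[\beta^H_m(t)\beta^H_k(s)]=\lambda_m\,R_H(t,s)\,\delta_{mk}$, so the operator is diagonal in $\{e_n\}$ with entries $\lambda_m R_H(t,s)$, i.e. $\operatorname{cov}(B^H(t),B^H(s))=R_H(t,s)\,\mathcal{Q}$, as required. Gaussianity of $B^H(t)$ follows since it is an $\mathcal{L}^2$-limit of the Gaussian partial sums and finite linear combinations of jointly Gaussian variables remain Gaussian in the limit.

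The main obstacle is property (3), the existence of continuous $\Kilbert$-valued sample paths. Each term $\sqrt{\lambda_n}\,\beta^H_n(\cdot)\,e_n$ is continuous, so it suffices to show that the series converges uniformly on every compact interval $[-T,T]$ almost surely. Here I would use a maximal estimate for scalar fBm, of the form $\Ex\sup_{t\in[-T,T]}|\beta^H_n(t)|^2\le C_{H,T}$ uniformly in $n$, together with $\sum_n\lambda_n<\infty$, to show that the tails of the series are Cauchy in the Banach space $C([-T,T];\Kilbert)$ and, after passing to almost sure convergence by a Borel--Cantelli/subsequence argument, to produce a continuous modification. This delivers the continuous sample paths and completes the construction.
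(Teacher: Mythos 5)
Your construction is correct and is exactly the one the paper has in mind: the paper does not prove this theorem itself (it cites \cite{P}), but immediately afterwards it presents the same spectral series $B^H(t)=\sum_{n\ge 1}\sqrt{\lambda_n}\,\beta_n^H(t)\,e_n$ built from the eigenpairs of $\mathcal{Q}$ and independent scalar fractional Brownian motions as the definition of a fractional $\mathcal{Q}$-Brownian motion. Your verification of the mean, the covariance identity $\operatorname{cov}(B^H(t),B^H(s))=R_H(t,s)\,\mathcal{Q}$, Gaussianity of the $\mathcal{L}^2$-limit, and almost-sure uniform convergence on compacts (via the maximal estimate and a subsequence argument) supplies precisely the details the paper delegates to the cited reference.
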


A fractional Brownian motion of $\mathcal{Q}$-covariance type can be defined directly using the infinite series
\begin{eqnarray}\label{A35}
B^H(t):=\sum_{n=1}^\infty\sqrt{\lambda_n} \beta_n^H(t) e_n,
\end{eqnarray}
where $(e_n,  n\in\mathbb{N})$ is an orthonormal basis in $\Kilbert$ consisting of eigenvectors of $\mathcal{Q}$ and $\{\lambda_n, n\in\mathbb{N}\}$ is the corresponding sequence of eigenvalues of $\mathcal{Q}$ such that $\mathcal{Q}e_n=\lambda_n e_n$ for all $n\in\mathbb{N}$ and $\{\beta_n^H(t),  n\in\mathbb{N},  t\in\mathbb{R}\}$ is a sequence of independent, real-valued standard fractional Brownian motions each with the same Hurst parameter $H\in (0, 1)$. Also, a standard cylindrical fractional Brownian motion in a Hilbert space $\Kilbert$ by is defined by the following formal series
\begin{equation} \label{A36}
B^H(t):=\sum_{n=1}^\infty \beta_n^H(t) e_n\,,
\end{equation}
where $\{e_n,  n\in\mathbb{N}\}$ is a complete orthonormal basis in $\Kilbert$ and $\{\beta_n^H(t),  n\in\mathbb{N},  t\in\mathbb{R}\}$
is a sequence of independent, real-valued standard fractional Brownian motions each with the same Hurst parameter $H\in (0, 1)$. It is well known that the infinite series \eqref{A36} converges in $L^2(\Omega, \Kilbert_1)$, then it defines a $\Kilbert_1$-valued random variable and $\{B^H(t),  t\in\mathbb{R}\}$ is a $\Kilbert_1$-valued fractional Brownian motion of $\mathcal{Q}$-covariance type.\\
Next, in order to define the stochastic integral $\displaystyle\int_{T_1}^{T_2} h(t)dB^H(t)$ for an operator-valued function $h: [T_1, T_2]\to\mathcal{L}(\Kilbert, \Hilbert)$ with $T_1$, $T_2\in\mathbb{R}$, $T_1<T_2$ and for only $H\in (1/2, 1)$, we need the following lemma.\\
\begin{lemma}\cite{P} \label{A40}
If $p>1/H$, then for a given $\varphi\in L^p([T_1, T_2], \mathbb{R})$ the following inequality is true
$$
\int_{T_1}^{T_2}\int_{T_1}^{T_2}\varphi (u)\varphi (v)\phi (u-v)\,du \,dv
\leq C_{T_1, T_2}\big|\varphi\big|^2_{L^p([T_1, T_2]; \mathbb{R})}
$$
for some $C_{T_1, T_2}>0$ that depends only on $T_1$ and $T_2$.
The function $\phi$ is called a fractional kernel and has the following form
\begin{equation} \label{A1}
\phi (u)=H(2H-1) |u|^{2H-2}\quad \mbox{for all}\quad u \in \mathbb{R}\,.
\end{equation}
\end{lemma}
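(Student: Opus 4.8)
The plan is to recognize this as a standard continuity estimate for the bilinear form associated with the weakly singular Riesz-type kernel $\phi(u)=H(2H-1)|u|^{2H-2}$, and to prove it via Young's convolution inequality. Since $H\in(1/2,1)$, the exponent $2H-2$ lies in $(-1,0)$, so $\phi$ is nonnegative with a locally integrable singularity at the origin; the positive prefactor $H(2H-1)$ will be absorbed into the final constant. First I would extend $\varphi$ by zero outside $[T_1,T_2]$, setting $\tilde\varphi:=\varphi\,\mathbf{1}_{[T_1,T_2]}$, and observe that on the domain of integration $|u-v|\le L:=T_2-T_1$. Hence the double integral equals $H(2H-1)\int_{\mathbb{R}}\tilde\varphi(u)\,(\tilde\varphi*k_L)(u)\,du$, where $k_L(w):=|w|^{2H-2}\mathbf{1}_{[-L,L]}(w)$ is the kernel truncated to the relevant range, a replacement that leaves the integral unchanged because $|u-v|$ never exceeds $L$.

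Next I would apply H\"older's inequality with exponents $p$ and its conjugate $p'$ to bound the integral by $\|\tilde\varphi\|_{L^p}\,\|\tilde\varphi*k_L\|_{L^{p'}}$, and then invoke Young's convolution inequality $\|\tilde\varphi*k_L\|_{L^{p'}}\le\|\tilde\varphi\|_{L^p}\,\|k_L\|_{L^r}$, where $1+1/p'=1/p+1/r$. Solving this relation gives $r=p/(2(p-1))$. The crucial point is then to check that $k_L\in L^r([-L,L])$: since $\|k_L\|_{L^r}^r=2\int_0^L w^{-r(2-2H)}\,dw$, finiteness requires $r(2-2H)<1$, and a short computation shows this is \emph{equivalent} to $p>1/H$. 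Thus the stated hypothesis is precisely what makes the truncated kernel $r$-integrable, and the resulting norm $\|k_L\|_{L^r}=(2L^{1-r(2-2H)}/(1-r(2-2H)))^{1/r}$ depends only on $L=T_2-T_1$ and on the fixed parameters $H,p$.

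Combining the two inequalities yields $\int\!\int\tilde\varphi(u)\tilde\varphi(v)\phi(u-v)\,du\,dv\le H(2H-1)\,\|k_L\|_{L^r}\,\|\varphi\|_{L^p([T_1,T_2])}^2$, so one sets $C_{T_1,T_2}:=H(2H-1)\,\|k_L\|_{L^r}$, which depends only on $T_1$ and $T_2$ through $L$, completing the argument. I expect the main obstacle to be the bookkeeping that pins the admissibility condition $r<1/(2-2H)$ to the hypothesis $p>1/H$; this is where the strict inequality enters, and it is essential that the domain be bounded so that the tail of the kernel is cut off at $|w|=L$ and never contributes. As an alternative high-level route, one could appeal directly to the Hardy--Littlewood--Sobolev inequality, which controls the kernel $|u-v|^{-(2-2H)}$ at the critical exponent $p_0=1/H$, and then use the finite-measure embedding $L^p([T_1,T_2])\hookrightarrow L^{1/H}([T_1,T_2])$ valid for $p>1/H$; this produces the same estimate, with the interval-length dependence now entering through the embedding constant.
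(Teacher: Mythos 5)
First, a point of reference: the paper does not prove this lemma at all — it is quoted, with citation, from Posp\'i\v{s}il \cite{P} — so there is no in-paper argument to compare yours against; your proposal must be judged on its own merits.

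Your main line (extend $\varphi$ by zero, truncate the kernel at $|w|\le L$, then H\"older followed by Young) is correct, but only on part of the stated range of $p$. The Young exponent you derive, $r=p/(2(p-1))$, satisfies $r\ge 1$ exactly when $p\le 2$. For $p>2$ you get $r<1$, and Young's inequality $\|\tilde\varphi*k_L\|_{L^{p'}}\le\|\tilde\varphi\|_{L^p}\,\|k_L\|_{L^r}$ is then not available; indeed it is false for $r<1$ (test it with a fixed compactly supported $f\in L^p$ and $g_\epsilon=\epsilon^{-1}\mathbf{1}_{[0,\epsilon]}$: then $\|g_\epsilon\|_{L^r}=\epsilon^{1/r-1}\to 0$, so the right-hand side tends to $0$, while $\|f*g_\epsilon\|_{L^q}\to\|f\|_{L^q}>0$). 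Since $H\in(1/2,1)$ gives $1/H\in(1,2)$, the hypothesis $p>1/H$ includes every $p>2$, and the paper applies the lemma for an arbitrary fixed $p>1/H$, so this is a genuine gap rather than an edge case. Your observation that $r(2-2H)<1$ is algebraically equivalent to $p>1/H$ is correct, but it only certifies integrability of the kernel; it does not rescue the convolution inequality when $r<1$.

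The repair is cheap, and it is the same device you already invoke in your closing remark: on the bounded interval, $\|\varphi\|_{L^{p_0}([T_1,T_2])}\le (T_2-T_1)^{1/p_0-1/p}\|\varphi\|_{L^p([T_1,T_2])}$ for any $p_0<p$, so it suffices to run your H\"older--Young computation for one fixed $p_0\in(1/H,2]$ (where $r\ge1$ and the integrability criterion applies) and transfer the estimate to all larger $p$, at the cost of a constant depending only on $T_2-T_1$, $H$ and $p$. Alternatively, your ``alternative high-level route'' — Hardy--Littlewood--Sobolev at the critical pair $p=q=1/H$, $\lambda=2-2H\in(0,1)$, followed by that same finite-measure embedding — is complete as stated and covers the whole range $p>1/H$ in one stroke; promoting it from an afterthought to the actual proof would close the gap with no further work.
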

If $\{\beta^H(t),  t\in\mathbb{R}\}$ is a real-valued standard fractional Brownian motion then
$$\Ex(\beta^H(t), \beta^H(s))=\operatorname{cov} (\beta^H(t), \beta^H(s))=\int_{0}^{t}\int_{0}^{s}\phi (r-u)\,du\,dr.$$
Let $\mathcal{E}$ be the family of $\Hilbert$-valued step functions
\begin{align*}
\mathcal{E} = \Big\{&h :  h(s)=\sum_{i=1}^{n-1} h_i \chi_{[t_i, t_{i+1})}(s),
\,T_1=t_1<t_2<\dots<t_n=T_2\\
&~~~~~~~~~~~~~~~~~~~~~~~~~~~~~~~~~~~~\text{and } h_i\in\Hilbert \text{ for }i\in\{1, \ldots, n-1\}\Big\}.
\end{align*}
For $h\in\mathcal{E}$, we define the stochastic integral as follows
\begin{equation} \label{def1}
\int_{T_1}^{T_2} h(t)\,d\beta^H(t):=\sum_{i=1}^{n-1}h_i(\beta^H(t_{i+1})-\beta^H(t_i))\, ,
\end{equation}
where $\{\beta^H(t), t\in [T_1, T_2]\}$ is a scalar fractional Brownian motion.
The expectation of this random variable is zero and the second moment is
\begin{align*}
\Ex\big\|\int_{T_1}^{T_2}h(t)\,d\beta^H(t)\big\|^2_\Hilbert&=\Ex\langle \sum_{i=1}^{n-1}h_i(\beta^H(t_{i+1})-\beta^H(t_i)), \sum_{i=1}^{n-1}h_i(\beta^H(t_{i+1})-\beta^H(t_i))\rangle_\Hilbert\\
&=\sum_{i=1}^{n-1}\sum_{j=1}^{n-1}\langle h_i,h_j\rangle_\Hilbert\Ex\Big[[\beta^H(t_{i+1})-\beta^H(t_i)][\beta^H(t_{j+1})-\beta^H(j_i)]\Big]\\
&=\sum_{i=1}^{n-1}\sum_{j=1}^{n-1}\langle h_i,h_j\rangle_\Hilbert\Big\{\Ex([\beta^H(t_{i+1})][\beta^H(t_{j+1})])-\Ex([\beta^H(t_{i})][\beta^H(t_{j})])\Big\}\\
&=\sum_{i=1}^{n-1}\sum_{j=1}^{n-1}\langle h_i,h_j\rangle_\Hilbert\int_{t_j}^{t_j+1}\int_{t_i}^{t_i+1}\phi (u-v)\,du\,dv\,\\
&=\int_{T_1}^{T_2}\int_{T_1}^{T_2}\sum_{i=1}^{n-1}\sum_{j=1}^{n-1}\langle h_i\chi_{[t_i, t_{i+1})}(u),h_j\chi_{[t_j, t_{j+1})}(v)\rangle_\Hilbert\phi (u-v)\,du\,dv\,\\
&=\int_{T_1}^{T_2}\int_{T_1}^{T_2}\langle h(u), h(v)\rangle_\Hilbert\phi (u-v)\,du\,dv\,.
\end{align*}
Using Lemma \ref{A40} and the fact that $\mathcal{E}$ is dense in $L^p([T_1, T_2], \Hilbert)$, it follows that for $h$ in $L^p([T_1, T_2], \Hilbert)$
$$
\Ex\big\|\int_{T_1}^{T_2}h(t)\,d\beta^H(t)\big\|^2_\Hilbert
\leq C_{T_1, T_2, p}\Big(\int_{T_1}^{T_2}\|h(s)\|^p_\Hilbert\,ds\Big)^{2/p}\,
$$
for some constant $C_{T_1, T_2, p}$ that depends only on $T_1$, $T_2$, and $p$.\\
Now, let $h: [T_1, T_2]\to\mathbb{L}_2$, where $\mathbb{L}_2= L_2(\Kilbert; \Hilbert)$ be the space of all Hilbert-Schmidt operators acting between $\Kilbert$ and $\Hilbert$. We assume that $h(\cdot)x\in L^p([T_1, T_2]; \Hilbert)$ and
\begin{eqnarray}\label{A41}
\int_{T_1}^{T_2}\int_{T_1}^{T_2}\|h(s)\|_{\mathbb{L}_2}\|h(r)\|_{\mathbb{L}_2}\phi (r-s)\,dr ds
<\infty\,
\end{eqnarray}
for any $x\in\Kilbert$ and for an arbitrary $p>1/H$ fixed.\\
Then for a $\Kilbert$-valued standard cylindrical fractional Brownian motion and for $h: [T_1, T_2]\to\mathbb{L}_2$, we define the stochastic integral by
\begin{equation} \label{A43}
\int_{T_1}^{T_2}h(t)\,dB^H(t):=\sum_{n=1}^\infty\int_{T_1}^{T_2} h(t)e_n\,d\beta_n^H(t),
\end{equation}
where $\{e_n,  n\in\mathbb{N}\}$ is a complete orthonormal basis in $\Kilbert$ and
$\{\beta_n^H(t),  n\in\mathbb{N},  t\in\mathbb{R}\}$
is a sequence of independent, real-valued standard fractional Brownian motions each with the same Hurst parameter $H\in (1/2, 1)$. Since $\{\beta_n^H(t),  n\in\mathbb{N},  t\in\mathbb{R}\}$ is a sequence of independent Gaussian random variables and by \eqref{def1}, the sequence of random variables $\left\{\displaystyle\int_{T_1}^{T_2}g(t)e_n\,d\beta_n^H(t), \;n\in\mathbb{N}\right\}$ are clearly mutually independent Gaussian random variables.\\
The second moment of the stochastic integral \eqref{A43} is given by
\begin{align*}
\Ex\big\|\int_{T_1}^{T_2}h(t)\,d\mathbb{B}^H(t)\big\|^2_\Hilbert
&=\Ex\langle\sum_{n=1}^\infty\int_{T_1}^{T_2}h(t)e_n\,d\beta_n^H(t), \sum_{n=1}^\infty \int_{T_1}^{T_2}h(t)e_n\,d\beta_n^H(t)\rangle_\Hilbert\\
&=\Ex\sum_{n=1}^\infty\langle\int_{T_1}^{T_2}h(t)e_n\,d\beta_n^H(t),\int_{T_1}^{T_2}h(t)e_n\,d\beta_n^H(t)\rangle_\Hilbert\\
&~~~~~~~~~~~~~~~~~~+\Ex\sum_{i\neq j}^\infty\langle\int_{T_1}^{T_2}h(t)e_i\,d\beta_i^H(t),\int_{T_1}^{T_2}h(t)e_j\,d\beta_j^H(t)\rangle_\Hilbert\\
&= \sum_{n=1}^\infty\Ex\big\|\int_{T_1}^{T_2}h(t)e_n\,d\beta_n^H(t)\big\|^2_\Hilbert + 0\\
&= \sum_{n=1}^\infty\int_{T_1}^{T_2}\int_{T_1}^{T_2}\langle h(s)e_n,h(r)e_n\rangle_\Hilbert\phi(r-s)\,dr ds\\
&\leq \int_{T_1}^{T_2}\int_{T_1}^{T_2}\|h(s)\|_{\mathbb{L}_2}\|h(r)\|_{\mathbb{L}_2}\phi (r-s)\,dr ds <\infty\,.
\end{align*}
Hence, the stochastic integral \eqref{A43} is a $\Hilbert$-valued Gaussian random variable.\\
For more details, we refer the reader to \cite{P1,P} and the references therein.

\subsection{Evolution families}
Let $(\mathbb{X}, \|\cdot\|)$ be a Banach space and $\mathtt{T}$ be a linear operator on $\mathbb{X}$. Then $\mathsf{Dom}(\mathtt{T})$, $\varrho(\mathtt{T})$, and $\sigma(\mathtt{T})$  stand
respectively for the domain, resolvent set, and spectrum of $C$. Similarly, one sets $R(\lambda, \mathtt{T}) := (\lambda I - \mathtt{T})^{-1}$ for all $\lambda \in \varrho(\mathtt{T})$ where $I$ is the identity operator for $\mathbb{X}$. We denote by ${\mathcal L}(\mathbb{X})$ the space of all bounded linear operators from $\mathbb{X}$ to itself.\\
The following definition was introduced by Acquistapace and Terreni in \cite{AT}.
\begin{definition}\cite{AT}\label{DEF} \rm
A family of closed linear operators $A(t)$ for $t\in \mathbb{R}$ on $\mathbb{X}$ with domain $\mathsf{Dom}(A(t))$ (possibly not densely defined) satisfy the so-called Acquistapace-Terreni condition, if there exist constants $\omega\geq 0$, $\theta
\in (\pi/2,\pi)$, $L,K > 0$, $ a,  b \in (0,1]$ with
$ a +  b > 1$ such that
\begin{equation}\label{AT1}
\\S_\theta \cup \{0\} \subset \varrho(A(t)-\omega) \ni \lambda,\quad
  \|R(\lambda,A(t)-\omega)\|\le \frac{K}{1+|\lambda|} \quad \text{for all }
   t \in \mathbb{R}
\end{equation}
   and
\begin{equation}\label{AT2}
\|(A(t)-\omega)R(\lambda,A(t)-\omega)\,[R(\omega,A(t))-R(\omega,A(s))]\|
  \le L\, \frac{|t-s|^ a}{|\lambda|^{ b}}
  \end{equation}
for $t,s\in\mathbb{R}$, $ \lambda \in S_\theta:=\{\lambda\in\mathbb{C}\setminus\{0\}: |\arg \lambda|\le\theta\}$.
\end{definition}
When $A(t)$ has a constant domain $\mathsf{D}=\mathsf{Dom}(A(t))$ then condition (\ref{AT2}) can be replaced with the following one: There exist constants $L>0$ and $0<\alpha\leq1$ such that
\begin{align}\label{ATbis}
    \|(A(t) - A(s))R(\omega,A(r))\|\leq L\|t-s\|^{\alpha} \quad \mbox{for all}\quad t,s,r \in \mathbb{R}.
\end{align}
More details can be found in \cite{Amann}.
\begin{theorem}\cite{BD}
Let $A(t)$ be a family of closed linear operators which satisfies Acquistapace-Terreni conditions. Then there exists a unique evolution family
$$
\mathcal{U}= \{U(t,s): t, s \in \mathbb{R} \text{ such that } t > s\}
$$
on $\mathbb{X}$ such that

\begin{enumerate}
\item[(a)] $U(t, s)\mathbb{X} \subseteq \mathsf{Dom}(A(t))$ for all $t, s \in \mathbb{R}$ with $t > s$;
\item[(b)]  $U(t,s)U(s,r)=U(t,r)$ for $t,s \in \mathbb{R}$ such that
 $t \geq s \geq r$;

\item[(c)] $U(t,t)=I$ for $t \in \mathbb{R}$ where $I$ is the identity
 operator of $\mathbb{X}$;

\item[(d)] $(t,s)\to U(t,s)\in {\mathcal L}(\mathbb{X})$ is continuous for $t>s$;

\item[(e)] $U(\cdot,s)\in C^1((s,\infty),{\mathcal L}(\mathbb{X}))$, $
\frac{\partial U}{\partial t}(t,s) =A(t)U(t,s)$ and
\begin{align*}\label{au}
  \|A(t)^k U(t,s)\|&\le K\,(t-s)^{-k}
  \end{align*}
for $0< t-s\le 1$ and $k=0,1$.
\end{enumerate}
\end{theorem}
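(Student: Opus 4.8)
The plan is to construct the evolution family explicitly by a parametrix method and then solve for the remaining part via a weakly singular Volterra equation, following the classical scheme of Acquistapace and Terreni \cite{AT}. First I would exploit condition \eqref{AT1}: it says precisely that each shifted operator $A(t)-\omega$ is sectorial of angle $\theta$ uniformly in $t$, with resolvent bound $\|R(\lambda,A(t)-\omega)\|\le K/(1+|\lambda|)$ on $S_\theta\cup\{0\}$. Hence $A(t)-\omega$ generates an analytic semigroup, and for $\tau>0$ one may define the frozen semigroup through the Dunford contour integral $e^{\tau(A(t)-\omega)}=\frac{1}{2\pi i}\int_\Gamma e^{\tau\lambda}R(\lambda,A(t)-\omega)\,d\lambda$, with $\Gamma$ a suitable path in the sector. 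The shift by $\omega$ only contributes a bounded factor $e^{\omega(t-s)}$, which is harmless on the range $0<t-s\le 1$ appearing in (e) and does not alter domains, so I would absorb it and write $E(t,s):=e^{(t-s)A(t)}$ for the leading term. The smoothing of the analytic semigroup yields at once $E(t,s)\mathbb{X}\subseteq\mathsf{Dom}(A(t))$ together with the basic estimate $\|A(t)^k E(t,s)\|\le C\,(t-s)^{-k}$ for $k=0,1$.

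Next I would compute the defect of this parametrix. Differentiating the contour representation in $t$ and using $\lambda R(\lambda,A(t))=I+A(t)R(\lambda,A(t))$, the contribution from the exponent reproduces $A(t)E(t,s)$, while the contribution from the $t$-dependence of the resolvent yields the residual kernel $R_1(t,s):=(\partial_t-A(t))E(t,s)=\frac{1}{2\pi i}\int_\Gamma e^{(t-s)\lambda}\,\partial_t R(\lambda,A(t))\,d\lambda$. The whole point is that condition \eqref{AT2} (or \eqref{ATbis} when the domain is constant), together with $a+b>1$, forces $R_1$ to be only \emph{weakly singular}: one obtains an estimate of the form $\|R_1(t,s)\|\le C\,(t-s)^{\mu-1}$ with $\mu>0$ (morally $\mu=a$ in the constant-domain case). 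I expect this estimate to be the main obstacle, since it is exactly here that the Hölder regularity of $t\mapsto A(t)$ encoded in \eqref{AT2} must be converted into an integrable singularity, and the non-density of the domains forces one to argue through fractional powers and the resolvent-difference structure of \eqref{AT2} rather than a naive commutator bound.

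With the weak singularity in hand, I would seek the evolution family in the form $U(t,s)=E(t,s)+\int_s^t E(t,r)\,\Phi(r,s)\,dr$ and impose $\partial_t U=A(t)U$ with $U(s,s)=I$. Since $E(t,t)=I$ and $(\partial_t-A(t))E(t,r)=R_1(t,r)$, this reduces to the Volterra integral equation $\Phi(t,s)=-R_1(t,s)-\int_s^t R_1(t,r)\,\Phi(r,s)\,dr$, which I would solve by the Neumann series $\Phi=\sum_{n\ge 1}(-1)^n R_n$, with $R_{n+1}(t,s)=\int_s^t R_1(t,r)\,R_n(r,s)\,dr$. Iterated convolution of the kernels $(t-s)^{\mu-1}$ improves the exponent through the Beta integral, giving $\|R_n(t,s)\|\le \frac{(C\,\Gamma(\mu))^n}{\Gamma(n\mu)}(t-s)^{n\mu-1}$, a series that converges uniformly on $0<t-s\le 1$; this defines $\Phi$ and hence $U(t,s)$.

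Finally I would verify the listed properties. Property (c) is built into the ansatz; (a) and (e) follow from the smoothing estimates on $E$ and the convergence of the series, which also yield $\|A(t)^kU(t,s)\|\le K(t-s)^{-k}$ after differentiating under the integral, a step legitimized by the integrability of the singularity of $R_1$; (d) follows from the joint continuity of $E$ and of the kernels $R_n$. For the cocycle law (b), the operators $U(t,s)U(s,r)$ and $U(t,r)$ both solve, as functions of $t$, the initial value problem $\partial_t X=A(t)X$ with the same value at $t=s$, so they coincide by a uniqueness argument; the same argument, applied to the difference $D(t)=U(t,s)-V(t,s)$ of two families satisfying (a)--(e), shows $\partial_t D=A(t)D$ with $D(s)=0$ and hence $D\equiv 0$ by a Gronwall estimate, which establishes uniqueness of the evolution family.
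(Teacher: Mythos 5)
Your proposal cannot be checked against any argument in the paper itself: the authors do not prove this theorem, they quote it from Bezandry--Diagana \cite{BD}, and the underlying construction is the one of Acquistapace--Terreni \cite{AT}. Your skeleton --- frozen-coefficient parametrix, weakly singular residual kernel, Volterra equation solved by a Neumann series whose iterates improve through the Beta integral --- is indeed the architecture of that classical proof, so the route is the right one. But two of your steps are genuinely broken, not merely compressed. The first is the formula $R_1(t,s)=\frac{1}{2\pi i}\int_\Gamma e^{(t-s)\lambda}\,\partial_t R(\lambda,A(t))\,d\lambda$: it presupposes that $t\mapsto R(\lambda,A(t))$ is differentiable, and under the Acquistapace--Terreni conditions it is not. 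Condition \eqref{AT2} is only a H\"older-type bound on resolvent \emph{differences}, of size $|t-s|^a/|\lambda|^b$, and that is all one is allowed to use. The entire technical content of the theorem is to run the parametrix scheme with differences instead of derivatives; this is precisely where the requirement $a+b>1$ enters, and the resulting weak singularity has exponent $\mu=a+b-1$ in the general (non-constant-domain) case, with $\mu=\alpha$ from \eqref{ATbis} only in the constant-domain situation. You state the key bound $\|R_1(t,s)\|\le C\,(t-s)^{\mu-1}$ as ``expected'' rather than deriving it, so the heart of the proof is missing.

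The second gap is the uniqueness and cocycle argument. From $\partial_t D=A(t)D$ with $D(s)=0$ you cannot conclude $D\equiv 0$ ``by a Gronwall estimate'': $A(t)$ is unbounded, so there is no inequality of the form $\|A(t)D(t)\|\le C\|D(t)\|$ to feed into Gronwall's lemma. The standard argument instead establishes backward regularity of the constructed family, namely $\partial_r U(t,r)x=-U(t,r)A(r)x$ for $x\in\mathsf{Dom}(A(r))$ (a statement that itself must be extracted from the parametrix construction), and then shows that $r\mapsto U(t,r)D(r)$ has vanishing derivative on $(s,t)$, whence $D(t)=U(t,t)D(t)=U(t,s)D(s)=0$; the identity $U(t,s)U(s,r)=U(t,r)$ is obtained the same way. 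A smaller point in the same spirit: when the domains $\mathsf{Dom}(A(t))$ are not dense --- which the theorem explicitly allows --- the frozen semigroup $e^{\tau A(t)}$ is not strongly continuous at $\tau=0$ on all of $\mathbb{X}$, so ``property (c) is built into the ansatz'' requires more care than the convention $E(t,t)=I$ suggests; this is one of the reasons the non-densely-defined case in \cite{AT} is delicate.
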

\begin{definition} \rm
An evolution family $\mathcal{U} = \{U(t,s): t, s \in \mathbb{R}
 \text{ such that } t \geq s\}$ is said to have an {\it exponential dichotomy} (or is {\it hyperbolic}) if there are projections $P(t)$ ($t\in\mathbb{R}$) that are uniformly bounded, strongly continuous in $t$ and there are constants $\delta>0$  and $N\ge1$ such that
\begin{enumerate}
\item[(f)] $U(t,s)P(s) = P(t)U(t,s)$;
\item[(g)] the restriction $U_Q(t,s):Q(s)\mathbb{X}\to Q(t)\mathbb{X}$ of $U(t,s)$ is invertible;
\item[(h)] $\|U(t,s)P(s)\| \le Ne^{-\delta (t-s)}$ and $\|\widetilde{U}_Q(s,t)Q(t)\|\le Ne^{-\delta (t-s)}$ for $t\ge s,\quad t,s\in \mathbb{R}$
\end{enumerate}
where $Q(\cdot)=I-P(\cdot)$ and $\widetilde{U}_Q(s,t):=U_Q(t,s)^{-1}$.\\\\
Note that if $U(t,s)$ is exponentially stable, then $\mathcal{U} $ is {\it hyperbolic} which $P(t)=I$.
\end{definition}
More details about the evolutions families can be found in \cite{Lun,EN}.

Throughout this work, we impose the following assumptions:
\begin{enumerate}
	\item The family of operators $A(t)$ on $L^{2}(\Omega,\Hilbert)$ satisfies the Acquistpace-Terreni condition and the evolution family $\mathcal{U}=\big\{U(t, s),   t\ge s\big\}$ associated with $A(t)$ is exponentially stable ( that is there exist constant $M$, $\delta >0$ such that
	$$\|U(t, s)\|\leq M e^{-\delta(t-s)} \quad \mbox{for all}\quad t\geq s).$$
	This implies $U(t,s)$ is {\it hyperbolic} whith $P(t)=I$.
	\item $U(t,s)z \in  SBAA(\mathbb{R},{\mathcal L}(L^{2}(\Omega,\Hilbert)))$  uniformly for all $z$ in any bounded subset in $L^{2}(\Omega,\Hilbert)$.
\end{enumerate}
If the above two conditions hold, we say that condition $\mathbf{(H_0)}$ holds.

We recall the following lemma that will be crucial in the proof of the main result.
\begin{lemma}\cite{Kamenskii}\label{Gronw}
	Let $g : \mathbb{R}\to \mathbb{R}$ be a continuous function such that, for every $t \in \mathbb{R}$,
	\begin{equation}\label{Gronw1}
	0\le g(t)\le \alpha(t)+\beta_1\displaystyle\int_{-\infty}^{t}e^{-\delta_1(t-s)}g(s)ds+\cdots+\beta_n\displaystyle\int_{-\infty}^{t}e^{-\delta_n(t-s)}g(s)ds
	\end{equation}
	for some locally integrable function $\alpha : \mathbb{R} \to \mathbb{R}$, and for some constants $\beta_1,\cdots,\beta_n>0$, and $\delta_1,\cdots,\delta_n>\beta$, where $\beta:=\displaystyle\sum_{i=1}^{n} \beta_i$. We assume that the integrals on the right side of \eqref{Gronw1} are convergent. Let $\delta_{\max}=\displaystyle\min_{1\le i\le n}\delta_i$. Then, for every $ \gamma\in (0,\delta_{\max}-\beta)$
	such that $\displaystyle\int_{-\infty}^{0}e^{\gamma\,s}\alpha(s)ds$
	converges, we have, for every $t\in \mathbb{R}$,
	$$ g(t)\le \alpha(t)+\beta\displaystyle\int_{-\infty}^{t}e^{-\gamma(t-s)}\alpha(s)ds,
	$$for every $t\in\mathbb{R}.$\\
	In particular, if $\alpha(t)$ is constant, then $$ g(t)\le \alpha\dfrac{\delta_{\max}}{\delta_{\max}-\beta},$$
for every $t\in\mathbb{R}.$
\end{lemma}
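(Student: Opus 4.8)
The plan is to prove the sharp version of the estimate, with decay rate $\delta_{\max}-\beta$, and then deduce the stated inequality (with the looser rate $\gamma$) and the constant case as immediate consequences. First I would collapse the $n$ integral terms into a single one. Since $\delta_{\max}=\min_i\delta_i$, each $\delta_i\ge\delta_{\max}$, so $e^{-\delta_i(t-s)}\le e^{-\delta_{\max}(t-s)}$ for $s\le t$, and because $g\ge0$ the right-hand side of \eqref{Gronw1} is bounded by
$$g(t)\le\alpha(t)+\beta\int_{-\infty}^{t}e^{-\delta_{\max}(t-s)}g(s)\,ds,\qquad \beta=\sum_{i=1}^n\beta_i.$$
This reduces the problem to a single-kernel linear integral inequality with infinite memory.

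The second step converts this to a differential inequality via an integrating factor. Set $V(t):=\int_{-\infty}^t e^{\delta_{\max}s}g(s)\,ds$ (finite by the convergence hypothesis), so that $V'(t)=e^{\delta_{\max}t}g(t)$ and $\beta\int_{-\infty}^t e^{-\delta_{\max}(t-s)}g(s)\,ds=\beta e^{-\delta_{\max}t}V(t)$. The reduced inequality becomes $V'(t)\le e^{\delta_{\max}t}\alpha(t)+\beta V(t)$, i.e. $\big(e^{-\beta t}V(t)\big)'\le e^{(\delta_{\max}-\beta)t}\alpha(t)$. Integrating from $-\infty$ to $t$ and multiplying back by the appropriate exponentials yields exactly
$$g(t)\le\alpha(t)+\beta\int_{-\infty}^t e^{-(\delta_{\max}-\beta)(t-s)}\alpha(s)\,ds.$$

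The hard part will be justifying the lower limit in this integration, namely $\lim_{\tau\to-\infty}e^{-\beta\tau}V(\tau)=0$. Since $e^{-\beta\tau}V(\tau)=\int_{-\infty}^{\tau}e^{\delta_{\max}s-\beta\tau}g(s)\,ds\le\int_{-\infty}^{\tau}e^{(\delta_{\max}-\beta)s}g(s)\,ds$ (using $s\le\tau$), this vanishes once $g$ obeys a mild growth control at $-\infty$, in particular when $g$ is bounded, as it will be in every application to \eqref{C1}, where $g(t)=\Ex\|\cdot\|^2$ is $\mathcal{L}^2$-bounded; this is precisely where the standing integrability assumptions enter. An equivalent route that sidesteps the boundary term is to iterate the single-kernel inequality: writing $(Tf)(t)=\beta\int_{-\infty}^te^{-\delta_{\max}(t-s)}f(s)\,ds$, one checks by induction that $T^k$ has kernel $\beta^k\tfrac{(t-s)^{k-1}}{(k-1)!}e^{-\delta_{\max}(t-s)}$, whence $g\le\sum_{k=0}^mT^k\alpha+T^{m+1}g$ with remainder $\|T^{m+1}g\|_\infty\le(\beta/\delta_{\max})^{m+1}\|g\|_\infty\to0$ since $\beta<\delta_{\max}$; summing the series gives the resolvent kernel $\beta e^{-(\delta_{\max}-\beta)(t-s)}$ and the same estimate.

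Finally, the stated conclusions follow. For $\gamma\in(0,\delta_{\max}-\beta)$ one has $e^{-(\delta_{\max}-\beta)(t-s)}\le e^{-\gamma(t-s)}$ for $s\le t$, so when $\alpha\ge0$ the sharp estimate is dominated by $\alpha(t)+\beta\int_{-\infty}^t e^{-\gamma(t-s)}\alpha(s)\,ds$, and the assumed convergence of $\int_{-\infty}^0 e^{\gamma s}\alpha(s)\,ds$ guarantees this bound is finite. When $\alpha$ is constant, the sharp estimate evaluates to $\alpha\big(1+\tfrac{\beta}{\delta_{\max}-\beta}\big)=\alpha\frac{\delta_{\max}}{\delta_{\max}-\beta}$, which is the last assertion.
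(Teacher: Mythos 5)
The paper never proves this lemma: it is imported verbatim, with the citation \cite{Kamenskii}, and used as a black box in the proofs of Theorems \ref{Th1} and \ref{Th2}. So there is no in-paper argument to compare yours against; your proposal has to be judged on its own. On its own terms it is essentially sound and follows the standard route: collapsing the $n$ kernels into one via $\delta_i\ge\delta_{\max}$ and $g\ge 0$, then either the integrating-factor computation with $V(t)=\int_{-\infty}^t e^{\delta_{\max}s}g(s)\,ds$ or the resolvent iteration $g\le\sum_k T^k\alpha+T^{m+1}g$. Both computations are correct (the kernel of $T^k$ and the bound $\|T^{m+1}\mathbf{1}\|_\infty=(\beta/\delta_{\max})^{m+1}$ check out), you obtain the sharper decay rate $\delta_{\max}-\beta$, and the stated conclusions, including the constant case $\alpha\,\delta_{\max}/(\delta_{\max}-\beta)$, follow from it — granted $\alpha\ge 0$, which you use tacitly when comparing the kernels $e^{-(\delta_{\max}-\beta)(t-s)}\le e^{-\gamma(t-s)}$ and which the statement does not actually assume.

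The one genuine issue is the point you flag yourself: both of your routes need control of $g$ at $-\infty$ (vanishing of the boundary term $e^{-\beta\tau}V(\tau)$, or $\|g\|_\infty<\infty$ so that $T^{m+1}g\to 0$). Your remark that ``this is precisely where the standing integrability assumptions enter'' is too generous to the statement: convergence of the integrals in \eqref{Gronw1} gives finiteness of $V(\tau)$ but does not imply finiteness of $\int_{-\infty}^{\tau}e^{(\delta_{\max}-\beta)s}g(s)\,ds$, nor boundedness of $g$. And some such extra hypothesis is genuinely necessary, because the lemma as printed is false: take $n=1$, $\alpha\equiv 0$, $\delta_1>\beta_1=\beta>0$, and $g(t)=e^{-(\delta_1-\beta)t}$; then $g$ is continuous and nonnegative, the integral $\beta\int_{-\infty}^{t}e^{-\delta_1(t-s)}g(s)\,ds$ converges and equals $g(t)$ exactly, so \eqref{Gronw1} holds with equality, yet the conclusion would force $g\le 0$. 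So the correct reading is that the lemma requires (as its source does, and as is true in every application in this paper, where $g(t)=\Ex\|\cdot\|^2$ of an $\mathcal{L}^2$-bounded process) that $g$ be bounded or at least of subexponential growth at $-\infty$; under that proviso your proof is complete, and it is a service that your write-up makes explicit a hypothesis the statement silently omits.
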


Let $\mathbb{U}$ and $\mathbb{U}_0$ be real separable Hilbert spaces, $\mathbb{L}_2:= L_2(\mathbb{U}; \Hilbert)$ denote the space of all Hilbert-Schmidt operators acting between $\mathbb{U}$ and $\Hilbert$ equipped with the Hilbert-Schmidt norm $\|\cdot\|_{\mathbb{L}_2}$ and $\mathbb{L}_2^0=L_2(\mathbb{U}_0; \Hilbert)$.

For each $t\in\mathbb{R}$, we denote by $\F_t$ the $\sigma$-field generated by the random variables $\big\{B^H(s),  W(s), s\leq t \big\}$ and the $\mathbb{P}$-null sets. In addition to the natural filtration $\big\{\F_t,  t\in\mathbb{R}\big\}$, we consider a larger filtration $\big\{\mathcal{G}_t,  t\in\mathbb{R}\big\}$ for which
\begin{itemize}
	\item [(1)] $\{\mathcal{G}_t\}$ is right-continuous and $\mathcal{G}_0$ contains the P-null sets,
	\item [(2)] $B^H$ is $\mathcal{G}_0$-measurable and $W$ is a $\mathcal{G}_t$-Brownian motion.
\end{itemize}

To prove Theorem \ref{Th1}, we need the following lemma that is a particular case of Lemma 2.2 in \cite{Se}.
\begin{lemma} \label{Bridge}
	Let $\Gamma :[0,T]\times \Omega \rightarrow {\mathcal L}(L^{p}(\Omega,\Hilbert)) $ be an $\F_{t}-$adapted measurable stochastic process satisfying $\displaystyle\int_{0}^{T}\Ex\big\|\Gamma(t)\big\|^{2}dt<\infty$ almost surely. Then, for any $p \in [1,\infty[$, there exits a constant $\widetilde{C_{p}} > 0$ such that
	\begin{align*}
	\Ex\left( \sup_{0\leq t \leq T}\Big\|\int_{0}^{T} \Gamma(s)dW(s)\Big\|^{p}\right)\leq
	\widetilde{C_{p}}\Big(\int_{0}^{T}\Ex\Big\|\Gamma(s)\Big\|^{2}ds \Big)^{\frac{p}{2}}
	\end{align*}
	for $T >0$.
\end{lemma}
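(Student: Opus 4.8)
The plan is to recognize the object inside the supremum as a continuous, $\Hilbert$-valued, square-integrable martingale and then to chain the vector-valued Burkholder--Davis--Gundy inequality with an elementary convexity estimate. Writing $M_t:=\int_0^t \Gamma(s)\,dW(s)$ for the stochastic integral process, the hypothesis $\int_0^T \Ex\|\Gamma(s)\|^2\,ds<\infty$ guarantees that $M=(M_t)_{0\le t\le T}$ is a well-defined, $\F_t$-adapted, continuous $\Hilbert$-valued martingale whose (trace) quadratic variation is dominated by $\int_0^t \|\Gamma(s)\|^2\,ds$. First I would record this martingale structure together with the identification of the quadratic variation, which is precisely where the operator-valued, adapted and square-integrability assumptions on $\Gamma$ enter.

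Next, I would invoke the Burkholder--Davis--Gundy inequality in the separable Hilbert space $\Hilbert$: for each $p\ge 1$ there is a constant $c_p>0$ with
\[
\Ex\Big(\sup_{0\le t\le T}\|M_t\|^p\Big)\le c_p\,\Ex\Big[\Big(\int_0^T \|\Gamma(s)\|^2\,ds\Big)^{p/2}\Big].
\]
In the Hilbert-space setting this maximal inequality is standard; it can be obtained by expanding $W$ through its series representation and $M_t$ along an orthonormal basis of $\Hilbert$, thereby reducing to the classical scalar BDG inequality, or equivalently by appealing to the martingale type-$2$ property of $\Hilbert$. This is the content quoted from Lemma 2.2 in \cite{Se}, specialized to a Brownian (rather than general martingale) integrator.

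It then remains to replace the random quadratic-variation integral by the deterministic quantity on the right-hand side. For $1\le p\le 2$ the map $x\mapsto x^{p/2}$ is concave on $[0,\infty)$, so Jensen's inequality, combined with Tonelli's theorem to exchange $\Ex$ with the time integral, yields
\[
\Ex\Big[\Big(\int_0^T \|\Gamma(s)\|^2\,ds\Big)^{p/2}\Big]\le \Big(\int_0^T \Ex\|\Gamma(s)\|^2\,ds\Big)^{p/2},
\]
and setting $\widetilde{C_{p}}:=c_p$ completes the argument in this range. The main obstacle lies in the regime $p>2$, where this concavity step reverses and is no longer available: there one must instead use Minkowski's integral inequality in $L^{p/2}(\Omega)$ (or the full factorization argument of \cite{Se}) to control $\big\|\int_0^T\|\Gamma(s)\|^2\,ds\big\|_{L^{p/2}(\Omega)}$ by the time integral of the $L^{p/2}(\Omega)$-norms. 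This is the delicate part, and the reason the statement is attributed to \cite{Se} rather than proved from scratch; I would therefore carry out the $1\le p\le 2$ case in detail and cite \cite{Se} for the remaining exponents.
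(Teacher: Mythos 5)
Your proposal should first be measured against what the paper actually does: the paper offers no proof of this lemma at all — it is quoted as a particular case of Lemma 2.2 in \cite{Se}. So your BDG-plus-Jensen argument is not a variant of the paper's proof but a substitute for a missing one, and for $1\le p\le 2$ it is correct and complete: reading the (evidently intended) integral $\int_0^t$ inside the supremum, the Hilbert-space Burkholder--Davis--Gundy inequality gives
\[
\Ex\Big(\sup_{0\le t\le T}\|M_t\|^p\Big)\le c_p\,\Ex\Big[\Big(\int_0^T\|\Gamma(s)\|^2\,ds\Big)^{p/2}\Big],
\]
and concavity of $x\mapsto x^{p/2}$ together with Tonelli turns the right-hand side into $c_p\big(\int_0^T\Ex\|\Gamma(s)\|^2\,ds\big)^{p/2}$. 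Since the paper only ever invokes the lemma with $p=2$ (the constant $\widetilde{C_2}$), your argument covers every use actually made of it, and is more self-contained than the paper's bare citation.

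Your treatment of $p>2$, however, contains a genuine error, and it is worth seeing why no repair exists. Minkowski's integral inequality in $L^{p/2}(\Omega)$ yields
\[
\Big\|\int_0^T\|\Gamma(s)\|^2\,ds\Big\|_{L^{p/2}(\Omega)}\le\int_0^T\big(\Ex\|\Gamma(s)\|^{p}\big)^{2/p}\,ds,
\]
i.e., it controls the quadratic variation by $p$-th moments of $\Gamma$, not by the second moments appearing in the lemma, so it cannot produce the stated right-hand side. In fact the inequality as stated is false for $p>2$: take $\Hilbert=\mathbb{R}$, $T=1$, and $\Gamma(s)\equiv\xi$ with $\xi$ an $\F_0$-measurable random variable satisfying $\Ex\xi^2<\infty$ but $\Ex|\xi|^p=\infty$; then $\int_0^1\Gamma\,dW=\xi\,W(1)$ with $W(1)$ independent of $\xi$, so the left-hand side equals $\Ex|\xi|^p\cdot\Ex|W(1)|^p=\infty$ while the right-hand side $(\Ex\xi^2)^{p/2}$ is finite. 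For the same reason, your fallback of citing \cite{Se} does not close this case either: Seidler's Lemma 2.2 keeps the expectation outside the power, $\Ex\big[\big(\int_0^T\|\Gamma(s)\|^2\,ds\big)^{p/2}\big]$, and only Jensen — hence only $p\le 2$ — converts that into the deterministic bound claimed here. So the defect lies in the paper's statement rather than in your method: your instinct that the concavity step is indispensable is exactly right, and the honest conclusion is that the lemma should either be restricted to $p\le 2$ (which suffices for this paper) or restated with the expectation outside the $(p/2)$-th power.
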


\section{Almost automorphic mild solution for Equ.\eqref{C1}}
We study the existence of almost automorphic mild solution for the mean field system \eqref{C1} in this section. First, we give the definition mild solution.
\begin{definition}
 A $\mathcal{G}_t$-progressively measurable process $\{\vartheta(t)\}_{t\in\mathbb{R}}$ is a mild solution of Equ.\eqref{C1} if it satisfies the stochastic integral equation
 \begin{equation}
 \begin{array}{ll}
\vartheta(t)=& U(t,b)\vartheta(b) +\displaystyle\int_{b}^{t} U(t,s)f(s,\vartheta(s),\Proba_{\vartheta(s)})ds + \displaystyle\int_{b}^{t} U(t,s)\theta(s,\vartheta(s),\Proba_{\vartheta(s)})dW(s)\\\\
 & + \displaystyle\int_{b}^{t} U(t,s)\psi(s,\Proba_{\vartheta(s)})dB^H(s)
 \end{array}
 \end{equation}
 for all $t\ge b$ and for each $b\in \mathbb{R}$.
\end{definition}
We introduce the following hypotheses which are assumed hereafter :
\begin{description}
\item[$\mathbf{(H_1)}$] The functions $f: \mathbb{R}\times \Hilbert\times \Borel(\Hilbert)\to \Hilbert$, $\theta : \mathbb{R}\times \Hilbert\times \Borel(\Hilbert)\to \mathbb{L}_2^0$ and $\Psi : \mathbb{R}\times \Borel(\Hilbert)\to \mathbb{L}_2$ are square-mean almost automorphic in $t\in \mathbb{R}$, for each $\vartheta\in \mathcal{L}^2(\Proba,\Hilbert)$.
\item[$\mathbf{(H_2)}$] There exist a constant $\mathbf{K}>0$ such that
\begin{align*}
&\|f(t,x,\nu_1)-f(t,y,\nu_2\|^2\le \mathbf{K} \left(\|x-y\|^2 + \mathcal{W}^2(\nu_1,\nu_2 \right),\\
&\|\theta(t,x,\nu_1)-\theta(t,y,\nu_2)\|^2_{\mathbb{L}_2^0}\le \mathbf{K} \left(\|x-y\|^2 + \mathcal{W}^2(\nu_1,\nu_2 \right),\\
& \|\psi(t,\nu_1)-\psi(t,\nu_2)\|_{\mathbb{L}_2}\le \mathbf{K}\;  \mathcal{W}(\nu_1,\nu_2),
 \end{align*}
for all $x,y\in \Hilbert$, $\nu_1,\nu_2\in \Borel(\Hilbert)$ and $t\in \mathbb{R}$. \end{description}

\begin{theorem}\label{Th1} Assume $\mathbf{(H_{0})}$, $\mathbf{(H_1)}$ and $\mathbf{(H_2)}$  hold. Then, Equ.\eqref{C1} has a unique $\mathcal{L}^2$-bounded solution provided that
	\begin{equation}\label{Cond1}
		2\mathbf{K}M^2\left(\dfrac{1}{\delta^2}+\dfrac{\widetilde{C_2}}{2\delta}\right)<1
	\end{equation} and
	\begin{equation}\label{Cond1+}
\dfrac{\beta_2}{\delta}\left[1+\,\dfrac{\beta_2}{\delta}\right]<1,
	\end{equation}
	where $\beta_2$ is a positive constant (see \eqref{beta12}).\\	Furthermore, this unique $\mathcal{L}^2$-bounded solution is almost automorphic in distribution, provided that
	\begin{equation}\label{Cond2}
	\left(\dfrac{18\mathbf{K}\,M^2}{\delta}\,+18\,\widetilde{C_2}\,M^2\, \mathbf{K}+9H(2H-1)M^2 (\mathbf{K})^2\right)<1.
	\end{equation}
\end{theorem}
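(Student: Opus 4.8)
The plan is to treat the two assertions separately. For existence and uniqueness I exploit that the evolution family is exponentially stable with $P(t)=I$: letting $b\to-\infty$ in the defining integral equation kills $U(t,b)\vartheta(b)$ (since $\|U(t,b)\|\le Me^{-\delta(t-b)}$) for every $\mathcal{L}^2$-bounded candidate, so an $\mathcal{L}^2$-bounded mild solution must be a fixed point of
\begin{align*}
(\Lambda\vartheta)(t):=&\int_{-\infty}^{t}U(t,s)f(s,\vartheta(s),\Proba_{\vartheta(s)})\,ds+\int_{-\infty}^{t}U(t,s)\theta(s,\vartheta(s),\Proba_{\vartheta(s)})\,dW(s)\\
&+\int_{-\infty}^{t}U(t,s)\psi(s,\Proba_{\vartheta(s)})\,dB^H(s).
\end{align*}
I would then run Banach's contraction principle for $\Lambda$ on $\mathcal{C}_b(\mathbb{R},\mathcal{L}^2(\Proba,\Hilbert))$, after checking via the same estimates that $\Lambda$ maps this space into itself.

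For the contraction, set $g(t):=\Ex\|\vartheta(t)-\widetilde\vartheta(t)\|^2$ for two candidates. The drift term is handled by Cauchy--Schwarz together with $\|U(t,s)\|\le Me^{-\delta(t-s)}$ and $\mathbf{(H_2)}$, and the Wiener term by Lemma \ref{Bridge} with $p=2$; bounding $\mathcal{W}(\Proba_{\vartheta(s)},\Proba_{\widetilde\vartheta(s)})\le g(s)^{1/2}$, these two contribute exactly the factor $2\mathbf{K}M^2(\delta^{-2}+\widetilde{C_2}/(2\delta))$ of \eqref{Cond1}. The fractional term is the delicate one: the second-moment estimate for $\int h\,dB^H$ from Section \ref{mbf} bounds it by a double integral over $(-\infty,t)^2$ of $\|U(t,s)\Delta\psi(s)\|_{\mathbb{L}_2}\,\|U(t,r)\Delta\psi(r)\|_{\mathbb{L}_2}$ against the singular kernel $\phi(u)=H(2H-1)|u|^{2H-2}$, where $\Delta\psi(s):=\psi(s,\Proba_{\vartheta(s)})-\psi(s,\Proba_{\widetilde\vartheta(s)})$. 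Using exponential decay, the Lipschitz bound on $\psi$, and $\mathcal{W}\le g^{1/2}$, I reorganize this (Young's inequality on the cross term $g(s)^{1/2}g(r)^{1/2}$, combined with the exponential weights and the integrability of $\phi$) into a Gronwall inequality of the form \eqref{Gronw1} with coefficient $\beta_2$ from \eqref{beta12}; invoking Lemma \ref{Gronw} and collecting the iterated fractional contributions yields the factor $\frac{\beta_2}{\delta}[1+\frac{\beta_2}{\delta}]$, so \eqref{Cond1+} makes the fractional part contractive as well. Together with \eqref{Cond1} this renders $\Lambda$ a strict contraction, and its unique fixed point is the desired $\mathcal{L}^2$-bounded mild solution.

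For almost automorphy in distribution I start from any sequence $\{e_n'\}$ and pass to a subsequence $\{e_n\}$ realizing simultaneously the square-mean almost automorphy of $f,\theta,\psi$ from $\mathbf{(H_1)}$ and the bi-almost automorphy of $U(t,s)$ from $\mathbf{(H_0)}$. A composition argument shows the drift and Wiener integrals inherit square-mean almost automorphy. The fractional integral, however, cannot be expected to be square-mean almost automorphic; here I shift $t\mapsto t+e_n$, substitute $s\mapsto s+e_n$, and use the stationarity of the increments of $B^H$ so that the shifted integral shares its law with an integral built from the convergent, shifted data. Passing to the limit in the metrics $d_{BL}$ and $\mathcal{W}$, with $\mathcal{W}(\Proba_{\vartheta},\Proba_{\widetilde\vartheta})\le(\Ex\|\vartheta-\widetilde\vartheta\|^2)^{1/2}$, identifies a limit equation whose unique $\mathcal{L}^2$-bounded solution $\widehat\vartheta$ satisfies $\Proba_{\vartheta(t+e_n)}\to\Proba_{\widehat\vartheta(t)}$ and $\Proba_{\widehat\vartheta(t-e_n)}\to\Proba_{\vartheta(t)}$ in $d_{BL}$. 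The stability estimate pushing these limits through, now carried out with the crude splitting $\|a+b+c\|^2\le3(\|a\|^2+\|b\|^2+\|c\|^2)$ applied to each occurrence, is precisely controlled by the left-hand side of \eqref{Cond2}; hence $t\mapsto\Proba_{\vartheta(t)}$ is almost automorphic, i.e.\ $\vartheta$ is almost automorphic in distribution.

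The main obstacle throughout is the fractional term. Because $B^H$ is neither Markov nor a semimartingale, its stochastic integral has no It\^o isometry, and its second moment is the genuinely nonlocal double integral against the singular kernel $\phi$; this forbids a one-step contraction and forces the Gronwall device of Lemma \ref{Gronw} together with the separate smallness condition \eqref{Cond1+}. The same term is also the reason the conclusion descends from square-mean almost automorphy to almost automorphy merely \emph{in distribution}, the stationarity of the fractional increments being the essential tool there.
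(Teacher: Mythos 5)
Your sketch of the distributional almost automorphy (the last part, tied to \eqref{Cond2}) follows the paper's own Step 3 in outline and is fine, but your existence--uniqueness argument has a genuine structural gap: as designed, it cannot produce the conditions \eqref{Cond1} and \eqref{Cond1+} that the theorem actually asserts. The paper does \emph{not} run a single fixed point with $\Proba_{\vartheta(s)}$ plugged into the operator. It first \emph{freezes} a measure path $\mu\in\mathcal{C}_b(\mathbb{R},\Borel(\Hilbert))$ and contracts the operator $\Upsilon_\mu$ (with $\mu(s)$ in place of $\Proba_{\vartheta(s)}$) on $\mathcal{C}_b(\mathbb{R},\mathcal{L}^2(\Proba,\Hilbert))$. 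Under this freezing the fBm integrand $\psi(s,\mu(s))$ does not depend on the unknown at all, and the Wasserstein terms in $\mathbf{(H_2)}$ vanish since $\mathcal{W}(\mu(s),\mu(s))=0$; that is precisely why \eqref{Cond1} carries the factor $2\mathbf{K}M^2$ with only the drift and Wiener contributions. The paper then defines $\mathbf{Q}(\mu)=\mathcal{L}(\vartheta_\mu)$, the law of the frozen-measure fixed point, and shows $\mathbf{Q}$ is a contraction on $\mathcal{C}_b(\mathbb{R},\Borel(\Hilbert))$. Comparing the two \emph{fixed points} $\vartheta_\mu$ and $\vartheta_{\widetilde{\mu}}$ yields a self-referential inequality $g(t)\le \beta_1\int_{-\infty}^{t}e^{-\delta(t-s)}g(s)\,ds+\frac{\beta_2}{\delta}\sup_{t}\mathcal{W}^2(\mu(t),\widetilde{\mu}(t))$ with $g(t)=\Ex\|\vartheta_\mu(t)-\vartheta_{\widetilde{\mu}}(t)\|^2$, and it is to \emph{this} inequality that Lemma \ref{Gronw} applies, producing the factor $\frac{\beta_2}{\delta}\left[1+\frac{\beta_2}{\delta}\right]$ of \eqref{Cond1+}.

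In your one-step scheme both ingredients break down. First, since the law varies together with the candidate process, you must split into three stochastic terms (a factor $3$, not $2$), and every Lipschitz bound picks up the extra term $\mathcal{W}^2(\Proba_{\vartheta(s)},\Proba_{\widetilde{\vartheta}(s)})\le\Ex\|\vartheta(s)-\widetilde{\vartheta}(s)\|^2$; the drift and Wiener parts alone therefore already contribute at least $6\mathbf{K}M^2\left(\frac{1}{\delta^2}+\frac{\widetilde{C_2}}{2\delta}\right)$, not the ``exact'' factor of \eqref{Cond1} you claim. Second, Lemma \ref{Gronw} cannot be invoked for a single application of $\Lambda$: your estimate bounds $\Ex\|(\Lambda\vartheta)(t)-(\Lambda\widetilde{\vartheta})(t)\|^2$ by integrals of $\Ex\|\vartheta(s)-\widetilde{\vartheta}(s)\|^2$, so the same unknown never appears on both sides, there is no inequality of type \eqref{Gronw1}, and the quantity $\frac{\beta_2}{\delta}\left[1+\frac{\beta_2}{\delta}\right]$ -- which is intrinsically a comparison between two fixed points -- cannot arise. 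Consequently your argument, even if completed, proves existence and uniqueness only under a strictly stronger smallness condition than \eqref{Cond1}--\eqref{Cond1+}, i.e.\ it does not prove the theorem as stated; the missing idea is the paper's decoupling of the McKean--Vlasov structure (freeze the measure, contract in $\vartheta$, then contract $\mathbf{Q}$ on the measure space). Two of your points do stand: the observation that any $\mathcal{L}^2$-bounded mild solution must be a fixed point (letting $b\to-\infty$ and using $\|U(t,b)\|\le Me^{-\delta(t-b)}$) is correct and supplies uniqueness cleanly, and your shift-and-stationarity argument for \eqref{Cond2} is essentially the paper's.
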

\begin{proof}
Let $\mu \in\mathcal{C}_b(\mathbb{R},\Borel(\Hilbert))$ be fixed and consider the operator  $\Upsilon $ defined by
\begin{equation*}
\begin{array}{ll}
(\Upsilon\vartheta)(t)=&\displaystyle\int_{-\infty}^{t} U(t,s)f(s,\vartheta(s),\mu(s))ds + \displaystyle\int_{-\infty}^{t} U(t,s)\theta(s,\vartheta(s),\mu(s))dW(s)\\\\
& + \displaystyle\int_{-\infty}^{t} U(t,s)\psi(s,\mu(s))dB^H(s).
\end{array}
\end{equation*}
for $ \vartheta \in \mathcal{C}_b(\mathbb{R},\mathcal{L}^2(\Proba,\Hilbert))$.
We break the proof into a sequence steps.\\
\textbf{Step 1}. Let us check that $\Upsilon\vartheta$ belongs to $ \mathcal{C}_b(\mathbb{R},\mathcal{L}^2(\Proba,\Hilbert))$.\\
 For arbitrary $t\ge t_1$,
 \begin{equation*}
 \begin{array}{rl}
 &\Ex\|(\Upsilon\vartheta)(t)-(\Upsilon\vartheta)(t_1)\|^2\\\\
 =&\Ex\Big\| \displaystyle\int_{-\infty}^{t} U(t,s)f(s,\vartheta(s),\mu(s))ds
 + \displaystyle\int_{-\infty}^{t} U(t,s)\theta(s,\vartheta(s),\mu(s))dW(s)\\\\
 & + \displaystyle\int_{-\infty}^{t} U(t,s)\psi(s,\mu(s))dB^H(s)-\displaystyle\int_{-\infty}^{t_1} U(t_1,s)f(s,\vartheta(s),\mu(s))ds\\\\
 &- \displaystyle\int_{-\infty}^{t_1} U(t_1,s)\theta(s,\vartheta(s),\mu(s))dW(s) - \displaystyle\int_{-\infty}^{t_1} U(t_1,s)\psi(s,\mu(s))dB^H(s)\Big\|^2\\\\
  =&3\mathbb{E}\Big\| \displaystyle\int_{-\infty}^{t} U(t,s)f(s,\vartheta(s),\mu(s))ds
-\displaystyle\int_{-\infty}^{t_1} U(t_1,s)f(s,\vartheta(s),\mu(s))ds\Big\|^2\\\\
& + 3\mathbb{E}\Big\|\displaystyle\int_{-\infty}^{t} U(t,s)\theta(s,\vartheta(s),\mu(s))dW(s)- \displaystyle\int_{-\infty}^{t_1} U(t_1,s)\theta(s,\vartheta(s),\mu(s))dW(s)\Big\|^2\\\\
 & + 3\mathbb{E}\Big\|\displaystyle\int_{-\infty}^{t} U(t,s)\psi(s,\mu(s))dB^H(s) - \displaystyle\int_{-\infty}^{t_1} U(t_1,s)\psi(s,\mu(s))dB^H(s)\Big\|^2
 \\\\
=&3\,\left[P_1(t)+P_2(t)+P_3(t)\right].
 \end{array}
 \end{equation*}
For $P_1(t)$, it follows from the H\"{o}lder inequality and exponential dissipation property of $U(t,s)$ that
\begin{equation}\label{EstimationContinuity1}
\begin{array}{rl}
&\Ex\Big\| \displaystyle\int_{-\infty}^{t} U(t,s)f(s,\vartheta (s),\mu(s))ds
-\displaystyle\int_{-\infty}^{t_1} U(t_1,s)f(s,\vartheta(s),\mu(s))ds\Big\|^2\\
&\le2\mathbb{E}\Big\| \displaystyle\int_{-\infty}^{t_1} \left(U(t,s)-U(t_1,s)\right)f(s,\vartheta(s),\mu(s))ds\Big\|^2+ 2\mathbb{E}\Big\|\displaystyle\int_{t_1}^{t} U(t,s)f(s,\vartheta(s),\mu(s))ds
\Big\|^2\\
&\le2\mathbb{E}\Big\| \displaystyle\int_{-\infty}^{t_1} \left[U(t,t_1)- I\right]U(t_1,s)f(s,\vartheta (s),\mu(s) )ds\Big\|^2\\
&\qquad + 2\left(\displaystyle\int_{t_1}^{t} M^2e^{-2\,\delta\,(t-s)}ds\right) \left(\displaystyle\int_{t_1}^{t}\Ex\|f(s,\vartheta(s),\mu(s))\|^2\,ds\right).\\
&\le2M^{2}\left(\displaystyle\int_{t_1}^{t} e^{-\delta\,(t_{1}-s)}ds\right) \left(\displaystyle\int_{-\infty}^{t_1}e^{-\delta\,(t_{1}-s)} \Ex\Big\| \left[U(t,t_1)- I\right]f(s,\vartheta (s),\mu(s) )\Big\|^2ds\right)\\
&\qquad + 2\left(\displaystyle\int_{t_1}^{t} M^2e^{-2\,\delta\,(t-s)}ds\right) \left(\displaystyle\int_{t_1}^{t}\Ex\|f(s,\vartheta(s),\mu(s))\|^2\,ds\right).\\
&\qquad + 2\left(\displaystyle\int_{t_1}^{t} M^2e^{-2\,\delta\,(t-s)}ds\right) \left(\displaystyle\int_{t_1}^{t}\Ex\|f(s,\vartheta(s),\mu(s))\|^2\,ds\right).\\
&\le\dfrac{2M^{2}}{\delta} \left(\displaystyle\int_{-\infty}^{t_1}e^{-\delta\,(t_{1}-s)} \Ex\Big\| \left[U(t,t_1)- I\right]f(s,\vartheta (s),\mu(s) )\Big\|^2ds\right)\\
&\qquad + 2M^2\sup_{s\in \mathbb{R}}\Ex\|f(s,\vartheta(s),\mu(s))\|^2\left(t-t_1\right)^{2}.\\
\end{array}
\end{equation}
From the strong continuity and exponential dissipation property of $U(t,t_{1})$, for $t_{1}\in (-\infty, t]$, we get
\begin{equation*}
\Ex\|\left[U(t,t_1)- I\right]f(s,\vartheta(s),\mu(s))\|^2\to 0
\end{equation*}
 as $t\to t_1$. As $t$ is in the neighborhood of $t_1$ sufficiently small, we have  \begin{equation*}
 \begin{array}{rl}
e^{-\delta\,(t_{1}-s)}\Ex\|\left[U(t,t_1)- I\right]f(s,\vartheta(s),\mu(s))\|^2
\le (M^2+1)e^{ \delta(t_1-s)} \Ex\|f(s,\vartheta(s),\mu(s))\|^2.
 \end{array}
 \end{equation*}
Since
\begin{equation*}
 \displaystyle\int_{-\infty}^{t_1}(M^2+1)e^{- \delta(t_1-s)} \Ex\|f(s,\vartheta(s),\mu(s))\|^2\,ds<\infty.
\end{equation*}
The Lebesgue dominated convergence theorem implies that
 \begin{equation*}
 \left(\displaystyle\int_{-\infty}^{t_1}e^{-\delta\,(t_{1}-s)} \Ex\Big\| \left[U(t,t_1)- I\right]f(s,\vartheta (s),\mu(s) )\Big\|^2ds\right)\to 0 \mbox{ as } t\to t_1.
 \end{equation*}
Hence, from \eqref{EstimationContinuity1}, it follows that

 \begin{equation}\label{EP1}
P_1(t)\longrightarrow0\quad \mbox{ as }\quad t\to t_1.
 \end{equation}

The case $t\le t_1 $ can be argued similarly.\\ Let
$\widetilde{W}(\tau)=W(\tau +t-t_1)-W(t-t_1)$ for each $\tau\in \mathbb{R}$. We know that $\widetilde{W}$ is Wiener process and has the same law of $W(\tau)$. Putting $F_{\theta}(s)=\theta(s,\vartheta(s),\mu(s))$, letting $\tau=s-t+t_1,$ and using  Lemma \ref{Bridge}, yileds
\begin{equation*}
\begin{array}{rl}
&\Ex\Big\|\displaystyle\int_{-\infty}^{t} U(t,s)\theta(s,\vartheta(s),\mu(s))dW(s)- \displaystyle\int_{-\infty}^{t_1} U(t_1,s)\theta(s,\vartheta(s),\mu(s))dW(s)\Big\|^2\\\\
=&\Ex\Big\|\displaystyle\int_{-\infty}^{t} U(t,s)F_{\theta}(s) dW(s)- \displaystyle\int_{-\infty}^{t_1} U(t_1,s)\,F_{\theta}(s)\,dW(s)\Big\|^2\\\\
=&\Ex\Big\|\displaystyle\int_{-\infty}^{t_1} U(t,t-t_1+s)\,F_{\theta}(t-t_1+s)\,dW(t-t_1+s)- \displaystyle\int_{-\infty}^{t_1} U(t_1,s)\, F_{\theta}(s)\,dW(s)\Big\|^2\\\\
=&\Ex\Big\|\displaystyle\int_{-\infty}^{t_1} U(t,t-t_1+s)\,F_{\theta}(t-t_1+s)\,d\widetilde{W}(s)- \displaystyle\int_{-\infty}^{t_1} U(t_1,s)\, F_{\theta}(s)\,d\widetilde{{W}}(s)\Big\|^2\\\\

\end{array}
\end{equation*}
\begin{equation*}
\begin{array}{rl}
=&\Ex\Big\|\displaystyle\int_{-\infty}^{t_1} \left[U(t,t-t_1+s)\,F_{\theta}(t-t_1+s)-U(t_1,s)\, F_{\theta}(s)\,\right]d\widetilde{W}(s)\Big\|^2\\\\
=&\Ex\Big\|\displaystyle\int_{-\infty}^{t_1} \big[U(t,t-t_1+s)(\,F_{\theta}(t-t_1+s)-F_{\theta}(s))\\\\
&\qquad+ U(t,t-t_1+s)F_{\theta}(s)-U(t_1,s)\, F_{\theta}(s)\,\big]d\widetilde{W}(s)\Big\|^2\\\\
\le&2\widetilde{C_2}\displaystyle\int_{-\infty}^{t_1}\Big[ \Ex\big\|U(t,t-t_1+s)\big(\,F_{\theta}(t-t_1+s)-F_{\theta}(s)\big)\big\|^2_{\mathbb{L}^0_2}\\\\
&\qquad+ \Ex\big\|U(t,t-t_1+s)F_{\theta}(s)-U(t_1,s)\, F_{\theta}(s)\,\big\|^2_{\mathbb{L}^0_2}\Big]\,ds\\\\
\le&2\widetilde{C_2}\displaystyle\int_{-\infty}^{t_1}\Big[\|U(t,t-t_1+s)\|^2\, \Ex\big\|F_{\theta}(t-t_1+s)-F_{\theta}(s)\big\|^2_{\mathbb{L}^0_2}\\\\
&\qquad+ \Ex\big\|U(t,t-t_1+s)F_{\theta}(s)-U(t_1,s)\, F_{\theta}(s)\,\big\|^2_{\mathbb{L}^0_2}\Big]\,ds\\\\
\le&2\widetilde{C_2}M^2\displaystyle\int_{-\infty}^{t_1}\, e^{-2\,\delta\, (t_1-s)}\, \Ex\big\|F_{\theta}(t-t_1+s)-F_{\theta}(s)\big\|^2_{\mathbb{L}^0_2}\,ds\\\\
&\qquad+ 2\widetilde{C_2}\displaystyle\int_{-\infty}^{t_1}\Ex\big\|U(t,t-t_1+s)F_{\theta}(s)-U(t_1,s)\, F_{\theta}(s)\,\big\|^2_{\mathbb{L}^0_2}\,ds.
\end{array}
\end{equation*}

Since \begin{equation*}
\, e^{-2\,\delta\, (t_1-s)}\, \Ex\big\|F_{\theta}(t-t_1+s)-F_{\theta}(s)\big\|^2_{\mathbb{L}^0_2}\to 0 \mbox{ as }t\to t_1
\end{equation*}
and
\begin{equation*}
e^{-2\,\delta\, (t_1-s)}\, \Ex\big\|F_{\theta}(t-t_1+s)-F_{\theta}(s)\big\|_{\mathbb{L}^0_2}^2\le K_1\,e^{-2\,\delta\, (t_1-s)},
\end{equation*}
where $K_1$ is a positive constant related to the boundedness of $F_{\theta}(s)$, it follows by Lebesgue dominated convergence theorem that
\begin{equation}\label{LB1}
\displaystyle\int_{-\infty}^{t_1}\, e^{-2\,\delta\, (t_1-s)}\, \Ex\big\|F_{\theta}(t-t_1+s)-F_{\theta}(s)\big\|^2_{\mathbb{L}^0_2}\,ds\to 0 \mbox{ as } t\to t_1.
\end{equation}
From the strong continuity of $U(t,s)$, we have
\begin{equation*}
\Ex\big\|U(t,t-t_1+s)F_{\theta}(s)-U(t_1,s)\, F_{\theta}(s)\,\big\|^2_{\mathbb{L}^0_2}\to \mbox{ as } t\to t_1,
\end{equation*}
and from the exponential dissipation property of $U(t,s)$, we have
\begin{equation*}
\Ex\big\|U(t,t-t_1+s)F_{\theta}(s)-U(t_1,s)\, F_{\theta}(s)\,\big\|^2_{\mathbb{L}^0_2}\le  4M^2\, e^{-2\delta\,(t_1-s)}\Ex\|F_{\theta}(s)\|^2_{\mathbb{L}^0_2}.
\end{equation*}
From the stochastic boundedness of $F_{\theta}\in SBC_0(\mathbb{R},\rho)$ and Lebesgue dominated convergence theorem, we deduce that
\begin{equation}\label{LB2}
\displaystyle\int_{-\infty}^{t_1}\Ex\big\|U(t,t-t_1+s)F_{\theta}(s)-U(t_1,s)\, F_{\theta}(s)\,\big\|^2_{\mathbb{L}^0_2}\,ds\to 0 \mbox{ as } t\to t_1.
\end{equation}
Hence, from \eqref{LB1} and \eqref{LB2}, we obtain
 \begin{equation}\label{EP2}
P_2(t) \longrightarrow 0 \mbox{ as } t\to t_1.
 \end{equation}

 For $P_3(t)=\Ex\big\|\displaystyle\int_{-\infty}^{t} U(t,s) F_{\psi}(s)\,dB^H(s) - \displaystyle\int_{-\infty}^{t_1} U(t_1,s)F_{\psi}(s)\,dB^H(s)\big\|^2$ where $F_{\psi}(s)=\psi(s,\mu(s))$.   Let
 $\widetilde{B^H}(\tau)=B^H(\tau +t-t_1)-B^H(t-t_1)$ for each $\tau\in \mathbb{R}$. We know that $\widetilde{B^H}(\tau)$ is fractional Brownian motion and has the same law of $B^H(\tau)$.

 \begin{equation*}
\begin{array}{rl}
P_3(t)=&\Ex\big\|\displaystyle\int_{-\infty}^{t} U(t,s) F_{\psi}(s)\,dB^H(s) - \displaystyle\int_{-\infty}^{t_1} U(t_1,s)F_{\psi}(s)\,dB^H(s)\big\|^2\\\\
=&\Ex\big\|\displaystyle\int_{-\infty}^{t_1} \big[U(t,t-t_1+s)(\,F_{\theta}(t-t_1+s)-F_{\theta}(s))\\\\
&\qquad\qquad+\quad U(t,t-t_1+s)F_{\theta}(s)-U(t_1,s)\, F_{\theta}(s)\,\big]d\widetilde{B^H}(s)\big\|^2\\\\
\le &H(2H-1)\displaystyle\int_{-\infty}^{t_1}\int_{-\infty}^{t_1} \big\|U(t,t-t_1+s)(\,F_{\theta}(t-t_1+s)-F_{\theta}(s))\\\\
&\qquad\qquad+\quad U(t,t-t_1+s)F_{\theta}(s)-U(t_1,s)\, F_{\theta}(s)\,\big\|_{\mathbb{L}_2}\\\\
&\times \big\|U(t,t-t_1+r)(\,F_{\theta}(t-t_1+r)-F_{\theta}(r))\\\\
&\qquad\qquad+\quad U(t,t-t_1+r)F_{\theta}(r)-U(t_1,r)\, F_{\theta}(r)\,\big\|_{\mathbb{L}_2}\,|r-s|^{2H-2}\,ds\,dr\\\\
\le&H(2H-1)\displaystyle\int_{0}^{+\infty}\int_{0}^{+\infty} \big\|U(t,t-v)(\,F_{\theta}(t-v)-F_{\theta}(v-t_1))\\\\
&\qquad\qquad+\quad U(t,t-v)F_{\theta}(t_1-v)-U(t_1,t_1-v)\, F_{\theta}(t_1-v)\,\big\|_{\mathbb{L}_2}\\\\
&\times \big\|U(t,t-u)(\,F_{\theta}(t-u)-F_{\theta}(u-t_1))\\\\
&\qquad+\quad U(t,t-u)F_{\theta}(u-t_1)-U(t_1,u-t_1)\, F_{\theta}(u-t_1)\,\big\|_{\mathbb{L}_2}\,|v-u|^{2H-2}\,dv\,du
\end{array}
\end{equation*}

\begin{equation*}
\begin{array}{rl}

\le&H(2H-1)\displaystyle\Big(\int_{0}^{+\infty} \big\|U(t,t-v)(\,F_{\theta}(t-v)-F_{\theta}(v-t_1))\\\\
&\qquad\qquad+\quad U(t,t-v)F_{\theta}(t_1-v)-U(t_1,t_1-v)\, F_{\theta}(t_1-v)\,\big\|_{\mathbb{L}_2}\Big)\\\\
&\times \displaystyle\int_{0}^{+\infty}\Big(\big\|U(t,t-u)(\,F_{\theta}(t-u)-F_{\theta}(u-t_1))\\\\
&\qquad+\quad U(t,t-u)F_{\theta}(u-t_1)-U(t_1,u-t_1)\, F_{\theta}(u-t_1)\,\big\|^2_{\mathbb{L}_2}\Big)^{1/2}\,|v-u|^{2H-2}\,dv\,du\\\\

\le&H(2H-1)\displaystyle\Big(\int_{0}^{+\infty} \big\|U(t,t-v)(\,F_{\theta}(t-v)-F_{\theta}(v-t_1))\\\\
&\qquad\qquad+\quad U(t,t-v)F_{\theta}(t_1-v)-U(t_1,t_1-v)\, F_{\theta}(t_1-v)\,\big\|_{\mathbb{L}_2}\Big)\\\\
&\times \displaystyle\int_{0}^{+\infty}\Big(\big\|U(t,t-u)(\,F_{\theta}(t-u)-F_{\theta}(u-t_1))\\\\
&\qquad\qquad+\quad U(t,t-u)F_{\theta}(u-t_1)-U(t_1,u-t_1)\, F_{\theta}(u-t_1)\,\big\|^2_{\mathbb{L}_2}\, du\Big)^{1/2}\\\\
&\qquad\qquad \times\Big(\displaystyle\int_{0}^{+\infty}\,|v-u|^{4H-4}\,du \Big)^{1/2}\,dv\\\\
\le&H(2H-1)\displaystyle\Big(\int_{0}^{+\infty} \big\|U(t,t-v)(\,F_{\theta}(t-v)-F_{\theta}(v-t_1))\\\\
&\qquad\qquad+\quad U(t,t-v)F_{\theta}(t_1-v)-U(t_1,t_1-v)\, F_{\theta}(t_1-v)\,\big\|^2_{\mathbb{L}_2}\, dv\Big)^{1/2}\\\\
&\times \displaystyle\int_{0}^{+\infty}\Big(\big\|U(t,t-u)(\,F_{\theta}(t-u)-F_{\theta}(u-t_1))\\\\
&\qquad\qquad+\quad U(t,t-u)F_{\theta}(u-t_1)-U(t_1,u-t_1)\, F_{\theta}(u-t_1)\,\big\|^2_{\mathbb{L}_2}\, du\Big)^{1/2}\\\\
&\qquad\qquad \times\Big(\displaystyle\int_{0}^{+\infty}\int_{0}^{+\infty}\,|v-u|^{4H-4}\,du\,dv \Big)^{1/2}\\\\
 \end{array}
 \end{equation*}

 \begin{equation*}
\begin{array}{rl}
\le&H(2H-1)\displaystyle\Big(\int_{0}^{+\infty} \big\|U(t,t-v)(\,F_{\theta}(t-v)-F_{\theta}(v-t_1))\\\\
&\qquad\qquad+\quad U(t,t-v)F_{\theta}(t_1-v)-U(t_1,t_1-v)\, F_{\theta}(t_1-v)\,\big\|^2_{\mathbb{L}_2}\, dv\Big)\\\\
&\qquad\qquad \times\Big(\displaystyle\int_{0}^{+\infty}\int_{0}^{+\infty}\,|v-u|^{4H-4}\,du\,dv \Big)^{1/2}\\\\

\end{array}
\end{equation*}
 \begin{equation*}
 \begin{array}{rl}
P_3(t)\le&H(2H-1)\displaystyle\Big(\int_{0}^{+\infty} \big\|U(t,t-v)(\,F_{\psi}(t-v)-F_{\psi}(v-t_1))\\\\
 &\qquad\qquad+\quad U(t,t-v)F_{\psi}(t_1-v)-U(t_1,t_1-v)\, F_{\psi}(t_1-v)\,\big\|^2_{\mathbb{L}_2}\, dv\Big)\\\\
 &\qquad\qquad \times\Big(\displaystyle\int_{0}^{+\infty}\int_{0}^{+\infty}\,|v-u|^{4H-4}\,du\,dv \Big)^{1/2}\\\\
 \le&H(2H-1)\displaystyle\Big(\int_{-\infty}^{t_1} \big\|U(t,t-t_1+s)(\,F_{\psi}(t-t_1+s)-F_{\psi}(s))\\\\
 &\qquad\qquad+\quad U(t,t-t_1+s)F_{\psi}(s)-U(t_1,s)\, F_{\psi}(s)\,\big\|^2_{\mathbb{L}_2}\Big)\\\\
 &\qquad\qquad \times\Big(\displaystyle\int_{0}^{+\infty}\int_{0}^{+\infty}\,|v-u|^{4H-4}\,du\,dv \Big)^{1/2}\\\\
 \le&H(2H-1)\Big(\displaystyle\int_{0}^{+\infty}\int_{0}^{+\infty}\,|v-u|^{4H-4}\,du\,dv \Big)^{1/2}\\\\
 &\qquad \times \displaystyle\Big(\int_{-\infty}^{t_1} \big\|U(t,t-t_1+s)(\,F_{\psi}(t-t_1+s)-F_{\psi}(s))\\\\
 &\hspace{3cm}+\quad U(t,t-t_1+s)F_{\psi}(s)-U(t_1,s)\, F_{\psi}(s)\,\big\|^2_{\mathbb{L}_2} ds\Big)
 \end{array}
 \end{equation*}
Arguing as in \eqref{LB1} and \eqref{LB2}, using  the strong continuity of $U(t,s)$, the exponential dissipation property of $U(t,s)$, the  boundedness of $F_{\psi}\in SBC_0(\mathbb{R},\rho)$ and Lebesgue dominated convergence theorem, we conclude that
 \begin{equation*}
 \displaystyle\int_{-\infty}^{t_1} \big\|U(t,t-t_1+s)(\,F_{\psi}(t-t_1+s)-F_{\psi}(s))+ U(t,t-t_1+s)F_{\psi}(s)-U(t_1,s)\, F_{\psi}(s)\,\big\|^2_{\mathbb{L}_2} ds\to 0,
 \end{equation*}
 as $t\to t_1$. Hence
 \begin{equation}\label{EP3}
 	P_3(t)\to 0 \mbox{ as } t\to t_1.
 \end{equation}

Hence from \eqref{EP1},\eqref{EP2} and \eqref{EP3}, it follows that
\begin{equation*}
\Ex\|(\Upsilon\vartheta)(t)-(\Upsilon\vartheta)(t_1)\|^2 \to 0 \mbox{ as } t\to t_1.
\end{equation*}
 which implies that $\Upsilon\vartheta$ is $\mathcal{L}^2$-continuous.\\
From
$f\in SBC_0(\mathbb{R}\times \mathbb{H}\times \Borel(\Hilbert),\mathcal{L}^2(\Proba,\Kilbert),\rho )$, $\theta\in SBC_0(\mathbb{R}\times \mathcal {L}^2(\Proba,\Kilbert)\times \Borel(\Hilbert),\mathcal{L}^2(\Proba,\mathbb{L}_2^0),\rho )$, $\psi \in SBC_0(\mathbb{R}\times \Borel(\Hilbert),\mathbb{L}_2,\rho )$  and the exponential dissipation property of $U(t,s)$, we  know that $\Upsilon\vartheta$ is $\mathcal{L}^2$-bounded. \\

Next, we shall prove that the operator $\Upsilon$ is a contraction mapping on $\mathcal{C}_b(\mathbb{R},\mathcal{L}^2(\Proba,\Hilbert))$ in square mean.
Let $\vartheta_1,\vartheta_2\in \mathcal{C}_b(\mathbb{R},\mathcal{L}^2(\Proba,\Hilbert))$, $\mu \in \Borel(\Hilbert)$ and $t\in \mathbb{R}$. Observe that
\begin{equation}
\begin{array}{rl}
&\Ex\|(\Upsilon\vartheta_1)(t)-(\Upsilon\vartheta_2)(t)\|^2\\\\
&\le2\Ex\left\|\displaystyle\int_{-\infty}^{t} U(t,s)[f(s,\vartheta_1(s),\mu(s))-f(s,\vartheta_2(s),\mu(s))]ds\right\|^2\\\\
&\quad + 2\Ex\left\|\displaystyle\int_{-\infty}^{t} U(t,s)[\theta(s,\vartheta_1(s),\mu(s))-\theta(s,\vartheta_2(s),\mu(s))]dW(s)\right\|^2\\\\\
&\le  2P_1 + 2P_2,
\end{array}
\end{equation}
where
\begin{equation}
\begin{array}{rl}
P_1 =&\Ex\left\|\displaystyle\int_{-\infty}^{t} U(t,s)[f(s,\vartheta_1(s),\mu(s))-f(s,\vartheta_2(s),\mu(s))]ds\right\|^2\\\\
P_2 =&\Ex\left\|\displaystyle\int_{-\infty}^{t} U(t,s)[\theta(s,\vartheta_1(s),\mu(s))-\theta(s,\vartheta_2(s),\mu(s))]dW(s)\right\|^2.
\end{array}
\end{equation}
From the hypotheses $\mathbf{(H_0)}$ and $\mathbf{(H_2)}$ and using the Cauchy-Schwarz inequality, we get
\begin{equation}\label{Estimation of P1}
\begin{array}{rl}
P_1 \le & \Ex\left\|\displaystyle\int_{-\infty}^{t} U(t,s)[f(s,\vartheta_1(s),\mu(s))-f(s,\vartheta_2(s),\mu(s))]ds\right\|^2\\\\
\le& M^{2}\Ex\left( \displaystyle\int_{-\infty}^{t} e^{-\delta(t-s)}\|f(s,\vartheta_1(s),\mu(s))-f(s,\vartheta_2(s),\mu(s))\|\,ds \right)^2\\\\
\le& M^2\left( \displaystyle\int_{-\infty}^{t}  e^{-\delta(t-s)}ds\right).\; \left(\displaystyle\int_{-\infty}^{t}e^{-\delta(t-s)}\Ex\|f(s,\vartheta_1(s),\mu(s))-f(s,\vartheta_2(s),\mu(s))\|^{2} \right)\\\\
\le& M^2\, \mathbf{K}\left( \displaystyle\int_{-\infty}^{t}e^{-\delta(t-s)}ds\right)^2.\; \displaystyle\sup_{t\in \mathbb{R}} \left(\Ex\|\vartheta_1(t)-\vartheta_2(t)\|^2\right)\\\\
\le& \dfrac{M^2\, \mathbf{K}}{\delta^2}\;\displaystyle\sup_{t\in \mathbb{R}} \left(\Ex\|\vartheta_1(t)-\vartheta_2(t)\|^2\right)\\\\
\end{array}
\end{equation}
From the hypotheses $\mathbf{(H_0)}$ and $\mathbf{(H_2)}$ and using Lemma \ref{Bridge},  we have
\begin{equation}\label{Estimation of P2}
\begin{array}{rl}
P_2=&\Ex\left\|\displaystyle\int_{-\infty}^{t} U(t,s)[\theta(s,\vartheta_1(s),\mu(s))-\theta(s,\vartheta_2(s),\mu(s))]dW(s)\right\|^2\\\\
\le& \widetilde{C_2} \left(\displaystyle\int_{-\infty}^{t} \|U(t,s)\|^2\, \Ex\|\theta(s,\vartheta_1(s),\mu(s))-\theta(s,\vartheta_2(s),\mu(s))\|^2_{\mathbb{L}_2^0}\;ds\right)\\\\
\le& \widetilde{C_2}{\mathbf{K}}M^2  \left(\displaystyle\int_{-\infty}^{t} e^{-2\delta(t-s)}  \Ex\|\vartheta_1(s)-\vartheta_2(s)\|^2ds\right)\\\\
\le& \widetilde{C_2} {\mathbf{K}} M^2 \left(\displaystyle\int_{-\infty}^{t} e^{-2\delta(t-s)} ds\right) \displaystyle \sup_{t\in \mathbb{R}}\left(\Ex\|\vartheta_1(t)-\vartheta_2(t)\|^2\right)\\\\
\le& \dfrac{\widetilde{C_2} {\mathbf{K}} M^2}{2\delta}\displaystyle \sup_{t\in \mathbb{R}}\left(\Ex\|\vartheta_1(t)-\vartheta_2(t)\|^2\right).
\end{array}
\end{equation}

Since $2\mathbf{K}M^2\left(\dfrac{1}{\delta^2}+\dfrac{\widetilde{C_2}}{2\delta}\right)<1$, we conclude that $\Upsilon$ is a contraction mapping in $\mathcal{C}_b(\mathbb{R},\mathcal{L}^2(\Proba,\Hilbert))$. Thus, $\Upsilon$ has a unique fixed point $\vartheta_\mu$.\\

\textbf{Step 2}. Next, we show that $\mu$ is the probability law of $\vartheta_\mu$.\\
Let $\mathcal{L}(\vartheta_\mu)=\{\mathcal{L}(\vartheta_\mu(t)),\, t\in \mathbb{R}\}$ represent the probability law of $\vartheta_\mu$ and define $\mathbf{Q}: \mathcal{C}_b(\mathbb{R},\Borel(\Hilbert))\to \mathcal{C}_b(\mathbb{R},\Borel(\Hilbert))$ by $\mathbf{Q}(\mu)=\mathcal{L}(\vartheta_\mu)$. We claim that  $\mathbf{Q}$ has a unique fixed-point. We use the Banach fixed-point-theorem and divide the proof in to two steps.\\
\textbf{Claim 1.} For arbitrary $\mu\in \mathcal{C}_b(\mathbb{R},\Borel(\Hilbert))$, the map $t\to \mathcal{L}(\vartheta_\mu(t))$ is continuous on $\mathbb{R}$.\\
To see this, let $ t\in \mathbb{R}$ and $|e|$ be sufficiently small.
We observe that
\begin{equation*}
\begin{array}{rl}
\mathcal{W}^2\,(\mathbf{Q}(\mu)(t+e),\mathbf{Q}(\mu)(t))=& \mathcal{W}^2\,(\mathbb{P}_{\vartheta_\mu(t+e)},\mathbb{P}_{\vartheta_\mu(t)})\\\\
\le & \Ex \| \vartheta_\mu(t+e)-\vartheta_\mu(t)\|^2\longrightarrow 0\quad\mbox{ as }\quad e\longrightarrow 0.
\end{array}
\end{equation*}
It follows that $t\to \mathbf{Q}(\mu)(t)$ is continuous on $\mathbb{R}$.\\
\textbf{Claim 2.} $\mathbf{Q}$ is a contraction mapping.\\
Let $\mu, \widetilde{\mu}\in \mathcal{C}_b(\mathbb{R},\Borel(\Hilbert))$ and $t\in \mathbb{R}$.
\begin{equation*}
\begin{array}{rl}
&\Ex\| \vartheta_{\mu}(t)-\vartheta_{\widetilde{\mu}}(t)\|^2 \\\\

\le& 3\Ex\big\|\displaystyle\int_{-\infty}^{t} U(t,s)\left[f(s,\vartheta_\mu(s),\mu(s))-f(s,\vartheta_{\widetilde{\mu}}(s),\widetilde{\mu}(s))\right]ds\big\|^2 \\\\
& 3\Ex\big\|\displaystyle\int_{-\infty}^{t} U(t,s)\left[\theta(s,\vartheta_\mu(s),\mu(s))-\theta(s,\vartheta_{\widetilde{\mu}}(s),\widetilde{\mu}(s))\right]dW(s)\big\|^2\\\\
& 3\Ex\big\|\displaystyle\int_{-\infty}^{t} U(t,s)\left[\psi(s,\mu(s))-\psi(s,\widetilde{\mu}(s))\right]dB^H(s)\big\|^2 \\\\
:=& \overline{J_1}+\overline{J_2}+\overline{J_3}
\end{array}
\end{equation*}

For $\overline{J_1}$, observe that
\begin{equation}\label{Mean1}
\begin{array}{rl}
\overline{J_1}=&3\Ex\big\|\displaystyle\int_{-\infty}^{t} U(t,s)\left[f(s,\vartheta_\mu(s),\mu(s))-f(s,\vartheta_{\widetilde{\mu}}(s),\widetilde{\mu}(s))\right]ds\big\|^2 \\\\
\le&3\Ex\left(\displaystyle\int_{-\infty}^{t} Me^{-\delta(t-s)}\|f(s,\vartheta_\mu(s),\mu(s))-f(s,\vartheta_{\widetilde{\mu}}(s),\widetilde{\mu}(s))\|ds\right)^2\\\\
\le&3\,M^2\,\left(\displaystyle\int_{-\infty}^{t} e^{-\delta(t-s)}ds\right)\,\Ex\left(\displaystyle\int_{-\infty}^{t} e^{-\delta(t-s)}\|f(s,\vartheta_\mu(s),\mu(s))-f(s,\vartheta_{\widetilde{\mu}}(s),\widetilde{\mu}(s))\|^2ds\right)\\\\
\le&3\,M^2\,\left(\displaystyle\int_{-\infty}^{t} e^{-\delta(t-s)}ds\right)\,\left(\displaystyle\int_{-\infty}^{t} e^{-\delta(t-s)}\mathbf{K}\, \left(\Ex\|\vartheta_\mu(s)-\vartheta_{\widetilde{\mu}}(s)\|^2+\mathcal{W}^2\left(\mu(s);\widetilde{\mu}(s)\right) ds\right)\right)\\\\

\le&\dfrac{3\,M^2\,\mathbf{K}}{\delta}\,\left(\displaystyle\int_{-\infty}^{t} e^{-\delta(t-s)}\, \left(\Ex\|\vartheta_\mu(s)-\vartheta_{\widetilde{\mu}}(s)\|^2+\mathcal{W}^2\left(\mu(s);\widetilde{\mu}(s)\right) ds\right)\right)\\\\
\le&
\dfrac{3\,M^2\,\mathbf{K}}{\delta}\,\displaystyle\int_{-\infty}^{t} e^{-\delta(t-s)}\, \Ex\|\vartheta_\mu(s)-\vartheta_{\widetilde{\mu}}(s)\|^2ds

+\dfrac{3\,M^2\,\mathbf{K}}{\delta}\,\left(\displaystyle\int_{-\infty}^{t} e^{-\delta(t-s)}\,\mathcal{W}^2\left(\mu(s);\widetilde{\mu}(s)\right) ds\right)\\\\
\end{array}
\end{equation}

For $\overline{J_2}$, we have

\begin{equation}\label{Mean2}
\begin{array}{rl}
\overline{J_2}=&3\Ex\big\|\displaystyle\int_{-\infty}^{t} U(t,s)\left[\theta(s,\vartheta_\mu(s),\mu(s))-\theta(s,\vartheta_{\widetilde{\mu}}(s),\widetilde{\mu}(s))\right]dW(s)\big\|^2\\\\

\le& \widetilde{C_2} \Ex\left(\displaystyle\int_{-\infty}^{t} \|U(t,s)\|^2\|\theta(s,\vartheta_\mu(s),{\mu}(s))-\theta(s,\vartheta_{\widetilde{\mu}}(s),{\widetilde{\mu}(s)})\|^2_{\mathbb{L}_2^0}\;ds\right)\\\\

\le& \widetilde{C_2}  \left(\displaystyle\int_{-\infty}^{t} M^2 e^{-\delta(t-s)} ds\right)  \left(\displaystyle\int_{-\infty}^{t} e^{-\delta(t-s)} {\mathbf{K}} \left(\Ex\|\vartheta_\mu(s)-\vartheta_{\widetilde{\mu}}(s)\|^2+\mathcal{W}^2({\mu}(s),{\widetilde{\mu}(s)}\right)ds\right)\\\\

\le& \dfrac{\widetilde{C_2}\, M^2\,{\mathbf{K}}}{\delta}  \left(\displaystyle\int_{-\infty}^{t} e^{-\delta(t-s)}  \left(\Ex\|\vartheta_\mu(s)-\vartheta_{\widetilde{\mu}}(s)\|^2+\mathcal{W}^2({\mu}(s),{\widetilde{\mu}(s)}\right)ds\right)\\\\

\le& \dfrac{\widetilde{C_2}\, M^2\,{\mathbf{K}}}{\delta}  \displaystyle\int_{-\infty}^{t} e^{-\delta(t-s)}  \Ex\|\vartheta_\mu(s)-\vartheta_{\widetilde{\mu}}(s)\|^2\,ds+\dfrac{\widetilde{C_2}\, M^2\,{\mathbf{K}}}{\delta} \displaystyle\int_{-\infty}^{t} e^{-\delta(t-s)} \mathcal{W}^2({\mu}(s),{\widetilde{\mu}(s)})\,ds\\\\
\end{array}
\end{equation}

For $\overline{J_3}$, we have
\begin{equation*}
\begin{array}{rl}
\overline{J_3}=&3\Ex\big\|\displaystyle\int_{-\infty}^{t} U(t,s)\left[\psi(s,\mu(s))-\psi(s,\widetilde{\mu}(s))\right]dB^H(s)\big\|^2 \\

\le& 3H(2H-1)\displaystyle\int_{-\infty}^{t}\int_{-\infty}^{t}\|U(t,s)[\psi(s,\mu(s))-\psi(s,{\widetilde{\mu}(s)})]\|_{\mathbb{L}_2} \\\\
&\qquad\qquad\qquad\times
\|U(t,r)[\psi(r,{\mu}(r))-\psi(r,{\widetilde{\mu}(r)})]\|_{\mathbb{L}_2}|r-s|^{2H-2}dr\,ds\\\\
\le& 3H(2H-1)M^2 (\mathbf{K})^2\displaystyle\int_{-\infty}^{t}\int_{-\infty}^{t}e^{-\delta(t-s)}\mathcal{W}(\mu(s),{\widetilde{\mu}(s)}) \\\\
&\qquad\qquad\qquad\times
e^{-\delta(t-r)}\mathcal{W}({\mu}(r),{\widetilde{\mu}(r)})|r-s|^{2H-2}dr\,ds\\\\
\le&3 H(2H-1)M^2 (\mathbf{K})^2\displaystyle\int_{0}^{+\infty}\int_{0}^{+\infty}e^{-\delta\,y}\,\mathcal{W}({\mu}(t-y),{\widetilde{\mu}(t-y)}) \\\\
&\qquad\qquad\qquad\times
e^{-\delta\,x}\,\mathcal{W}({\mu}(t-x),{\widetilde{\mu}(t-x)})|x-y|^{2H-2}dx\,dy\\\\

\end{array}
\end{equation*}

\begin{equation}\label{Mean3}
\begin{array}{rl}

\le&3 H(2H-1)M^2 (\mathbf{K})^2\displaystyle\int_{0}^{+\infty}e^{-\delta\,y}\,\mathcal{W}({\mu}(t-y),{\widetilde{\mu}(t-y)}) \\\\
&\qquad\times
\displaystyle\left(\int_{0}^{+\infty}e^{-\delta\,x}\,\mathcal{W}^2({\mu}(t-x),{\widetilde{\mu}(t-x)})dx\right)^{1/2}\left(\int_{0}^{+\infty}e^{-\delta\,x}|x-y|^{4H-4}dx\right)^{1/2}\,dy\\\\
\le& 3H(2H-1)M^2 (\mathbf{K})^2\left(\displaystyle\int_{0}^{+\infty}e^{-\delta\,y}\,\mathcal{W}^2({\mu}(t-y),{\widetilde{\mu}(t-y)}) dy\right)^{1/2}\\\\
&\times
\displaystyle\left(\int_{0}^{+\infty}e^{-\delta\,x}\,\mathcal{W}^2({\mu}(t-x),{\widetilde{\mu}(t-x)})dx\right)^{1/2}\left(\displaystyle\int_{0}^{+\infty}e^{-\delta\,y}\,\int_{0}^{+\infty}e^{-\delta\,x}|x-y|^{4H-4}dx\,dy\right)^{1/2}\\\\

\le& 3H(2H-1)M^2 (\mathbf{K})^2
\left(\dfrac{\Gamma(4H-2)}{(4H-2)\delta^{4H-2}}+\dfrac{\Gamma(4H-3)}{2}\right)^{1/2}\left(\displaystyle\int_{-\infty}^{t}e^{-\delta\,(t-s)}\,\mathcal{W}^2({\mu}(s),{\widetilde{\mu}(s)}) ds\right)\\\\
\le&3\dfrac{H(2H-1)M^2 (\mathbf{K})^2}{\delta^{2H-1}}\left(\dfrac{\Gamma(4H-2)}{4H-2}+\dfrac{\Gamma(4H-3)}{2}\right)^{1/2}\left(\displaystyle\int_{-\infty}^{t}e^{-\delta\,(t-s)}\,\mathcal{W}^2({\mu}(s),{\widetilde{\mu}(s)}) ds\right)\\\\
\le&3{H(2H-1)M^2 (\mathbf{K})^2}{\delta}\,C(\delta,H)\left(\displaystyle\int_{-\infty}^{t}e^{-\delta\,(t-s)}\,\mathcal{W}^2({\mu}(s),{\widetilde{\mu}(s)}) ds\right)\\\\
\end{array}
\end{equation}

Hence, from \eqref{Mean1}, \eqref{Mean2} and \eqref{Mean3}, we obtain
\begin{equation*}
\begin{array}{rl}
&\Ex\| \vartheta_{\mu}(t)-\vartheta_{\widetilde{\mu}}(t)\|^2 \\\\
\le&
\dfrac{3\,M^2\,\mathbf{K}}{\delta}\,\displaystyle\int_{-\infty}^{t} e^{-\delta(t-s)}\, \Ex\|\vartheta_\mu(s)-\vartheta_{\widetilde{\mu}}(s)\|^2ds

+\dfrac{3\,M^2\,\mathbf{K}}{\delta}\,\left(\displaystyle\int_{-\infty}^{t} e^{-\delta(t-s)}\,\mathcal{W}^2\left(\mu(s);\widetilde{\mu}(s)\right) ds\right)\\\\
&+ \dfrac{\widetilde{C_2}\, M^2\,{\mathbf{K}}}{\delta}  \displaystyle\int_{-\infty}^{t} e^{-\delta(t-s)}  \Ex\|\vartheta_\mu(s)-\vartheta_{\widetilde{\mu}}(s)\|^2\,ds+\dfrac{\widetilde{C_2}\, M^2\,{\mathbf{K}}}{\delta} \displaystyle\int_{-\infty}^{t} e^{-\delta(t-s)} \mathcal{W}^2({\mu}(s),{\widetilde{\mu}(s)})\,ds\\\\
&+{3H(2H-1)M^2 (\mathbf{K})^2}{\delta}\,C(\delta,H)\left(\displaystyle\int_{-\infty}^{t}e^{-\delta\,(t-s)}\,\mathcal{W}^2({\mu}(s),{\widetilde{\mu}(s)}) ds\right)\\\\

\end{array}
\end{equation*}

\begin{equation*}
\begin{array}{rl}
\le& \dfrac{3\,M^2\,\mathbf{K}+\widetilde{C_2}\, M^2\,{\mathbf{K}}}{\delta}\,\displaystyle\int_{-\infty}^{t} e^{-\delta(t-s)}\, \Ex\|\vartheta_\mu(s)-\vartheta_{\widetilde{\mu}}(s)\|^2ds\\\\
&\,+\left( \dfrac{3\,M^2\,\mathbf{K}+\widetilde{C_2}\, M^2\,{\mathbf{K}}}{\delta}+{3H(2H-1)M^2 (\mathbf{K})^2}{\delta}\,C(\delta,H)\right)\displaystyle\int_{-\infty}^{t}e^{-\delta\,(t-s)}\,\mathcal{W}^2({\mu}(s),{\widetilde{\mu}(s)}) ds\\\\
\le& \beta_1\,\displaystyle\int_{-\infty}^{t} e^{-\delta(t-s)}\, \Ex\|\vartheta_\mu(s)-\vartheta_{\widetilde{\mu}}(s)\|^2ds+\beta_2\displaystyle\int_{-\infty}^{t}e^{-\delta\,(t-s)}\,\mathcal{W}^2({\mu}(s),{\widetilde{\mu}(s)}) ds\\\\
\le& \beta_1\,\displaystyle\int_{-\infty}^{t} e^{-\delta(t-s)}\, \Ex\|\vartheta_\mu(s)-\vartheta_{\widetilde{\mu}}(s)\|^2ds+\left(\dfrac{\beta_2}{\delta}\right)\displaystyle\sup_{t\in \mathbb{R}}\mathcal{W}^2({\mu}(t),{\widetilde{\mu}(t)}),\\\\
\end{array}
\end{equation*}
where \begin{equation}\label{beta12}
\beta_1=\dfrac{3\,M^2\,\mathbf{K}+\widetilde{C_2}\, M^2\,{\mathbf{K}}}{\delta} \;\mbox{ and }
\beta_2= \beta_1+{3H(2H-1)M^2 (\mathbf{K})^2}{\delta}\,C(\delta,H).
\end{equation}

An application of Lemma \ref{Gronw}, yields
\begin{equation*}
\begin{array}{rl}
&\Ex\| \vartheta_{\mu}(t)-\vartheta_{\widetilde{\mu}}(t)\|^2 \\\\
\le&\left(\dfrac{\beta_2}{\delta}\right)\displaystyle\sup_{t\in \mathbb{R}}\mathcal{W}^2({\mu}(t),{\widetilde{\mu}(t)})+\beta_1\,\displaystyle\int_{-\infty}^{t} e^{-\delta(t-s)}\,\left(\dfrac{\beta_2}{\delta}\right)\displaystyle\sup_{t\in \mathbb{R}}\mathcal{W}^2({\mu}(t),{\widetilde{\mu}(t)})ds\\\\
\le&\left(\dfrac{\beta_2}{\delta}\right)\displaystyle\sup_{t\in \mathbb{R}}\mathcal{W}^2({\mu}(t),{\widetilde{\mu}(t)})+\beta_2\,\left(\dfrac{\beta_2}{\delta^2}\right)\displaystyle\sup_{t\in \mathbb{R}}\mathcal{W}^2({\mu}(t),{\widetilde{\mu}(t)})\\\\
\end{array}
\end{equation*}
\begin{equation*}
\begin{array}{rl}
\le&\left[\dfrac{\beta_2}{\delta}+\,\dfrac{\beta_2\beta_2}{\delta^2}\right]\displaystyle\sup_{t\in \mathbb{R}}\mathcal{W}^2({\mu}(t),{\widetilde{\mu}(t)})\\\\
\le&\dfrac{\beta_2}{\delta}\left[1+\,\dfrac{\beta_2}{\delta}\right]\displaystyle\sup_{t\in \mathbb{R}}\mathcal{W}^2({\mu}(t),{\widetilde{\mu}(t)})

\end{array}
\end{equation*}
Therefore,

\begin{equation*}
\displaystyle\sup_{t\in \mathbb{R}}\mathcal{W}^2(\mathbf{Q}(\mu)(t),\mathbf{Q}(\widetilde{\mu})(t))\le\dfrac{\beta_2}{\delta}\left[1+\,\dfrac{\beta_2}{\delta}\right]\displaystyle\sup_{t\in \mathbb{R}}\mathcal{W}^2({\mu}(t),{\widetilde{\mu}(t)}).
\end{equation*}
Since $\dfrac{\beta_2}{\delta}\left[1+\,\dfrac{\beta_2}{\delta}\right]<1$, it follows that $\mathbf{Q}$ is a contraction mapping on $\mathcal{C}_b(\mathbb{R},\Borel(\Hilbert))$. Therefore, by the Banach fixed-point theorem, we deduce that $\mathbf{Q}$ has a unique fixed-point $\mu$ and $\vartheta_{\mu}$ is a mild solution of Eq.\eqref{C1} on $\mathbb{R}$.\\

\noindent\textbf{Step 3 :} We show the almost automorphic in distribution of $\mathcal{L}^2$-bounded solution for Equ.\eqref{C1}.\\
Let $\{e^\prime_n\}$ be an arbitrary sequence of real numbers. Since $f,\psi$ and $\theta$ are square-mean almost automorphic, there exists a subsequence $\{e_n\}$ of $\{e^\prime_n\}$ and  functions $\widehat{f},\widehat{\psi}$ and $\widehat{\theta}$ such that
\begin{equation*}
\begin{array}{rl}
\displaystyle\lim\limits_{n\to \infty}\Ex\|f(t+e_n,\vartheta,\Proba_{\vartheta})-\widehat{f}(t,\vartheta,\Proba_{\vartheta})\|^2&=0 \mbox{ and }
\displaystyle\lim\limits_{n\to \infty}\Ex\|\widehat{f}(t-e_n,\vartheta,\Proba_{\vartheta})-f(t,\vartheta,\Proba_{\vartheta})\|^2=0,\\\\
\displaystyle\lim\limits_{n\to
\infty}\Ex\|\theta(t+e_n,\vartheta,\Proba_{\vartheta})-\widehat{\theta}(t,\vartheta,\Proba_{\vartheta})\|^2_{\mathbb{L}_2^0}&=0\mbox{ and }
\displaystyle\lim\limits_{n\to \infty}\Ex\|\widehat{\theta}(t-e_n,\vartheta,\Proba_{\vartheta})-\theta(t,\vartheta,\Proba_{\vartheta})\|^2_{\mathbb{L}_2^0}=0\\\\

\displaystyle\lim\limits_{n\to
	\infty}\Ex\|\psi(t+e_n,\Proba_{\vartheta})-\widehat{\psi}(t,\Proba_{\vartheta})\|^2_{\mathbb{L}_2}&=0\mbox{ and }
\displaystyle\lim\limits_{n\to \infty}\Ex\|\widehat{\psi}(t-e_n,\Proba_{\vartheta})-\psi(t,\Proba_{\vartheta})\|^2_{\mathbb{L}_2}=0\\\\
\end{array}
\end{equation*}
for each $t\in \mathbb{R}$, $\vartheta\in \mathcal{L}^2(\Proba,\Hilbert),$ and $\Proba_\vartheta\in \Borel(\Hilbert)$. By $\mathbf{(H_{0})}$, there exist an evolution family $V(t,s)$ and a bounded subset $B$ of $\mathcal{L}^2(\Proba, \Hilbert)$ such that
\begin{equation}\label{Evo1}
\lim\limits_{n\to \infty} \Ex\| U(t+e_n,s+e_n)\vartheta -V(t,s)\vartheta\|^2=0
\end{equation}
and
\begin{equation}
\lim\limits_{n\to \infty} \Ex\| V(t-e_n,s-e_n)\vartheta -U(t,s)\vartheta\|^2=0,
\end{equation} for each $\vartheta\in B$. By \eqref{Evo1} and the exponential dissipation property of $U(t,s)$, we have
\begin{equation}\label{EVF}
	\Ex\|V(t,s)\vartheta\|^2\le 2M^2\, e^{-2\delta(t-s)}\Ex\|\vartheta\|^2 \mbox{ for all } t\ge s \mbox{ and } \vartheta\in B.
\end{equation}
   Let
$\widehat{\vartheta}(\cdot)$ be such that
\begin{equation}
\begin{array}{rl}
\widehat{\vartheta}(t)=&\displaystyle\int_{-\infty}^{t} V(t,s)\widehat{f}(s,\widehat{\vartheta}(s),\Proba_{\widehat{\vartheta}(s)})ds + \displaystyle\int_{-\infty}^{t} V(t,s)\widehat{\theta}(s,\widehat{\vartheta}(s),\Proba_{\widehat{\vartheta}(s)})dW(s)\\\\
& + \displaystyle\int_{-\infty}^{t} V(t,s)\widehat{\psi}(s,\Proba_{\widehat{\vartheta}(s)})dB^H(s),
\end{array}
\end{equation} and for each $s\in \mathbb{R}$ let
 $\widehat{W}_n(s)=W(s+e_n)-W(e_n)$ and $\widehat{B^H_n}(s)=B^H(s+e_n)-B^H(e_n)$. We know that $\widehat{W}_n$ is a Brownian motion with same law as $W$ and $\widehat{B^H}$ is a fractional Brownian motion with the same law as $B^H$.
 The process
\begin{equation}
\begin{array}{ll}
\vartheta(t+e_n)=&\displaystyle\int_{-\infty}^{t+e_n} U(t+e_n,s)f(s,\vartheta(s),\Proba_{\vartheta(s)})ds + \displaystyle\int_{-\infty}^{t+e_n} U(t+e_n,s)\theta(s,\vartheta(s),\Proba_{\vartheta(s)})dW(s)\\\\
& + \displaystyle\int_{-\infty}^{t+e_n} U(t+e_n,s)\psi(s,\Proba_{\vartheta(s)})dB^H(s)
\end{array}
\end{equation}

becomes
\begin{equation}
\begin{array}{ll}
\vartheta(t+e_n)=&\displaystyle\int_{-\infty}^{t} U(t+e_n,s+e_n)f(s+e_n,\vartheta(s+e_n),\Proba_{\vartheta(s+e_n)})ds \\\\
&\qquad+ \displaystyle\int_{-\infty}^{t} U(t+e_n,s+e_n)\theta(s+e_n,\vartheta(s+e_n),\Proba_{\vartheta(s+e_n)})d\widehat{\mathbb{W_n}}(s)\\\\
& \qquad\qquad+ \displaystyle\int_{-\infty}^{t } U(t+e_n ,s+e_n)\psi(s+e_n,\Proba_{\vartheta(s+e_n)})d\widehat{B^H_n}(s).
\end{array}
\end{equation}
We consider the process
\begin{equation}
\begin{array}{ll}
\vartheta_n(t)=&\displaystyle\int_{-\infty}^{t} U(t+e_n,s+e_n)f(s+e_n,\vartheta_n(s),\Proba_{\vartheta_n(s)})ds \\\\
&\qquad+ \displaystyle\int_{-\infty}^{t} U(t,s)\theta(s+e_n,\vartheta_n(s),\Proba_{\vartheta_n(s)})d{W}(s)\\\\
& \qquad\qquad+ \displaystyle\int_{-\infty}^{t } U(t ,s)\psi(s+e_n,\Proba_{\vartheta_n(s)})d{B^H}(s).
\end{array}
\end{equation}
Note that $\vartheta(t+e_n)$  has the same distribution as $\vartheta_n(t)$ for each $t\in \mathbb{R}$.
We claim that $\vartheta_n(t)$ converges in quadratic mean to $\widehat{\vartheta}(t)$ for each fixed $t\in \mathbb{R}$. To see this, observe that
\begin{equation}
\begin{array}{rl}
&\Ex\| \vartheta_n(t)-\widehat{\vartheta}(t)\|^2\\\\
\le &3\Ex\| \displaystyle\int_{-\infty}^{t} \left[U(t+e_n,s+e_n)f\left(s+e_n,\vartheta_n(s),\Proba_{\vartheta_n(s)}\right)- V(t,s)\widehat{f}\left(s,\widehat{\vartheta}(s),\Proba_{\widehat{\vartheta}(s)}\right)\right]ds\|^2 \\\\
&+3\Ex\|\displaystyle\int_{-\infty}^{t} \left[U(t+e_n,s+e_n)\theta\left(s+e_n,\vartheta_n(s),\Proba_{\vartheta_n(s)}\right) -V(t,s)\widehat{\theta}\left(s,\widehat{\vartheta}(s),\Proba_{\widehat{\vartheta}(s)}\right)\right]dW(s)\|^2\\\\
&+3\Ex\| \displaystyle\int_{-\infty}^{t } \left[U(t+e_n,s+e_n)\psi\left(s+e_n,\Proba_{\vartheta_n(s)}\right)-V(t ,s)\widehat{\psi}\left(s,\Proba_{\widehat{\vartheta}(s)}\right)\right]dB^H(s)\|^2\\\\
&:=J_1+J_2+J_3

\end{array}
\end{equation}

For $J_1$, we have

\begin{equation*}
\begin{array}{rl}
J_1\le& 3\Ex\| \displaystyle\int_{-\infty}^{t} \left[U(t+e_n,s+e_n)f\left(s+e_n,\vartheta_n(s),\Proba_{\vartheta_n(s)}\right)- V(t,s)\widehat{f}\left(s,\widehat{\vartheta}(s),\Proba_{\widehat{\vartheta}(s)}\right)\right]ds\|^2 \\\\
\le& 9\Ex\| \displaystyle\int_{-\infty}^{t}U(t+e_n,s+e_n)\left[f\left(s+e_n,\vartheta_n(s),\Proba_{\vartheta_n(s)}\right)-f\left(s+e_n,\widehat{\vartheta}(s),\Proba_{\widehat{\vartheta}(s)}\right) \right]ds\|^2\\\\
&+9\Ex\| \displaystyle\int_{-\infty}^{t} U(t+e_n,s+e_n)\left[f\left(s+e_n,\widehat{\vartheta}(s),\Proba_{\widehat{\vartheta}(s)}\right)-\widehat{f}\left(s,\widehat{\vartheta}(s),\Proba_{\widehat{\vartheta}(s)}\right) \right]ds\|^2\\\\
&+9\Ex\| \displaystyle\int_{-\infty}^{t}\left[ U(t+e_n,s+e_n)\widehat{f}\left(s,\widehat{\vartheta}(s),\Proba_{\widehat{\vartheta}(s)}\right) - V(t,s)\widehat{f}\left(s,\widehat{\vartheta}(s),\Proba_{\widehat{\vartheta}(s)}\right)\right]ds\|^2\\\\
\le& 9M^2\Ex\bigg(\displaystyle\int_{-\infty}^{t}e^{-\delta(t-s)}\; \|f\left(s+e_n,\vartheta_n(s),\Proba_{\vartheta_n(s)}\right)-f\left(s+e_n,\widehat{\vartheta}(s),\Proba_{\widehat{\vartheta}(s)}\right) \|\,ds\bigg)^2\\\\
&+9M^2\Ex\bigg( \displaystyle\int_{-\infty}^{t} e^{-\delta(t-s)}\;\|f\left(s+e_n,\widehat{\vartheta}(s),\Proba_{\widehat{\vartheta}(s)}\right)-\widehat{f}\left(s,\widehat{\vartheta}(s),\Proba_{\widehat{\vartheta}(s)}\right) \|\,ds\bigg)^2\\\\
&+9\Ex\bigg(\displaystyle\int_{-\infty}^{t}\|  U(t+e_n,s+e_n)\widehat{f}\left(s,\widehat{\vartheta}(s),\Proba_{\widehat{\vartheta}(s)}\right) - V(t,s)\widehat{f}\left(s,\widehat{\vartheta}(s),\Proba_{\widehat{\vartheta}(s)}\right)\|\,ds\bigg)^2\\\\
\le& 9\displaystyle\frac{M^2}{\delta}\displaystyle\int_{-\infty}^{t}e^{-\delta(t-s)}\; \Ex\|f\left(s+e_n,\vartheta_n(s),\Proba_{\vartheta_n(s)}\right)-f\left(s+e_n,\widehat{\vartheta}(s),\Proba_{\widehat{\vartheta}(s)}\right) \|^2\,ds\\\\
&+9\displaystyle\frac{M^2}{\delta}\displaystyle\int_{-\infty}^{t} e^{-\delta(t-s)}\;\Ex\|f\left(s+e_n,\widehat{\vartheta}(s),\Proba_{\widehat{\vartheta}(s)}\right)-\widehat{f}\left(s,\widehat{\vartheta}(s),\Proba_{\widehat{\vartheta}(s)}\right) \|^2\,ds\\\\
&+9\Ex\bigg(\displaystyle\int_{-\infty}^{t}\|  U(t+e_n,s+e_n)\widehat{f}\left(s,\widehat{\vartheta}(s),\Proba_{\widehat{\vartheta}(s)}\right) - V(t,s)\widehat{f}\left(s,\widehat{\vartheta}(s),\Proba_{\widehat{\vartheta}(s)}\right)\|\,ds\bigg)^2\\\\
\le& 9\displaystyle\frac{M^2}{\delta}\displaystyle\int_{-\infty}^{t}e^{-\delta(t-s)}\; \left(\Ex\left\| \vartheta_n(s)-\widehat{\vartheta}(s)\right\|^2+\mathcal{W}^2(\Proba_{\vartheta_n(s)},\Proba_{\widehat{\vartheta}(s)}\right)\,ds\\\\
&+9\displaystyle\frac{M^2}{\delta}\displaystyle\int_{-\infty}^{t} e^{-\delta(t-s)}\;\Ex\|f\left(s+e_n,\widehat{\vartheta}(s),\Proba_{\widehat{\vartheta}(s)}\right)-\widehat{f}\left(s,\widehat{\vartheta}(s),\Proba_{\widehat{\vartheta}(s)}\right) \|^2\,ds\\\\
&+9\bigg(\displaystyle\int_{-\infty}^{t}e^{-p(t-s)}ds\bigg) \bigg(\displaystyle\int_{-\infty}^{t}e^{p(t-s)}\Ex\| [U(t+e_n,s+e_n) - V(t,s)]\widehat{f}\left(s,\widehat{\vartheta}(s),\Proba_{\widehat{\vartheta}(s)}\right)\|^2\,ds\bigg)\\\\
\end{array}
\end{equation*}

\begin{equation}
\begin{array}{rl}
\le& 18\displaystyle\frac{M^2}{\delta}\displaystyle\int_{-\infty}^{t}e^{-\delta(t-s)}\; \Ex\left\|\vartheta_n(s)-\widehat{\vartheta}(s)\right\|^2 ds\\\\
&+9\displaystyle\frac{M^2}{\delta}\displaystyle\int_{-\infty}^{t} e^{-\delta(t-s)}\;\Ex\|f\left(s+e_n,\widehat{\vartheta}(s),\Proba_{\widehat{\vartheta}(s)}\right)-\widehat{f}\left(s,\widehat{\vartheta}(s),\Proba_{\widehat{\vartheta}(s)}\right) \|^2\,ds\\\\
&+\displaystyle\frac{9}{p}  \bigg(\displaystyle\int_{-\infty}^{t}e^{p(t-s)}\Ex\| [U(t+e_n,s+e_n) - V(t,s)]\widehat{f}\left(s,\widehat{\vartheta}(s),\Proba_{\widehat{\vartheta}(s)}\right)\|^2\,ds\bigg)\\\\
\le& \dfrac{18\mathbf{K}\,M^2}{\delta}\, \displaystyle\int_{-\infty}^{t}\,e^{-\delta(t-s)}\; \Ex\|\vartheta_n(s)-\widehat{\vartheta}(s)\|^2\,ds + X_{1}({n})
\end{array}
\end{equation}

where $p\in (0,2\delta)$ is some constant and
\begin{equation*}
\begin{array}{rl}
X_{1}({n})=&9\displaystyle\frac{M^2}{\delta}\displaystyle\int_{-\infty}^{t} e^{-\delta(t-s)}\;\Ex\|f\left(s+e_n,\widehat{\vartheta}(s),\Proba_{\widehat{\vartheta}(s)}\right)-\widehat{f}\left(s,\widehat{\vartheta}(s),\Proba_{\widehat{\vartheta}(s)}\right) \|^2\,ds\\\\
&+\displaystyle\frac{9}{p}  \bigg(\displaystyle\int_{-\infty}^{t}e^{p(t-s)}\Ex\| [U(t+e_n,s+e_n) - V(t,s)]\widehat{f}\left(s,\widehat{\vartheta}(s),\Proba_{\widehat{\vartheta}(s)}\right)\|^2\,ds\bigg).\\\\
\end{array}
\end{equation*}

Since  $f$ is square-mean almost automorphic in $t$ and $\widehat{\vartheta}(\cdot)$ is bounded in $\mathcal{L}^2(\Proba, \Hilbert)$, we have $\sup_{s\in \mathbb{R}}\|f\left(s+e_n,\widehat{\vartheta}(s),\Proba_{\widehat{\vartheta}(s)}\right)\|^2<\infty$, so that $\sup_{s\in \mathbb{R}}\|\widehat{f}\left(s,\widehat{\vartheta}(s),\Proba_{\widehat{\vartheta}(s)}\right)\|^2<\infty$. Noting \eqref{Evo1}, from Lebesgue dominated convergence theorem, it follows that

\begin{equation*}
	\begin{array}{rr}
	\lim\limits_{n\to \infty} \displaystyle \int_{-\infty}^{t} e^{-\delta(t-s)} \Ex\|f\left(s+e_n,\widehat{\vartheta}(s),\Proba_{\widehat{\vartheta}(s)}\right)-\widehat{f}\left(s,\widehat{\vartheta}(s),\Proba_{\widehat{\vartheta}(s)}\right)\|^2\,ds=&0\\\\
\lim\limits_{n\to \infty}\displaystyle\int_{-\infty}^{t}e^{p(t-s)}\Ex\| [U(t+e_n,s+e_n) - V(t,s)]\widehat{f}\left(s,\widehat{\vartheta}(s),\Proba_{\widehat{\vartheta}(s)}\right)\|^2\,ds=0\\
	\end{array}
\end{equation*}
Therefore, $X_1(n)\to 0$.\\
For $J_2$, by Lemma \ref{Bridge}, we have
\begin{equation*}
\begin{array}{rl}
J_2
\le& 9\Ex\| \displaystyle\int_{-\infty}^{t}U(t+e_n,s+e_n)\left[\theta\left(s+e_n,\vartheta_n(s),\Proba_{\vartheta_n(s)}\right)-\theta\left(s+e_n,\widehat{\vartheta}(s),\Proba_{\widehat{\vartheta}(s)}\right) \right]dW(s)\|^2\\\\
&+9\Ex\| \displaystyle\int_{-\infty}^{t} U(t+e_n,s+e_n)\left[\theta\left(s+e_n,\widehat{\vartheta}(s),\Proba_{\widehat{\vartheta}(s)}\right)-\widehat{\theta}\left(s,\widehat{\vartheta}(s),\Proba_{\widehat{\vartheta}(s)}\right) \right]dW(s)\|^2\\\\
&+9\Ex\| \displaystyle\int_{-\infty}^{t}\left[ U(t+e_n,s+e_n)\widehat{\theta}\left(s,\widehat{\vartheta}(s),\Proba_{\widehat{\vartheta}(s)}\right) - V(t,s)\widehat{\theta}\left(s,\widehat{\vartheta}(s),\Proba_{\widehat{\vartheta}(s)}\right)\right]dW(s)\|^2\\\\
	\end{array}
\end{equation*}
\begin{equation}\label{J2}
\begin{array}{rl}

\le& 9\widetilde{C_2} \displaystyle\int_{-\infty}^{t}\Ex\left\| U(t+e_n,s+e_n)\left[\theta\left(s+e_n,\vartheta_n(s),\Proba_{\vartheta_n(s)}\right)-\theta\left(s+e_n,\widehat{\vartheta}(s),\Proba_{\widehat{\vartheta}(s)}\right)\right]\right\|^2_{L_2^0}ds\\\\
&+9\widetilde{C_2}  \displaystyle\int_{-\infty}^{t}\Ex\left\| U(t+e_n,s+e_n)\left[\theta\left(s+e_n,\widehat{\vartheta}(s),\Proba_{\widehat{\vartheta}(s)}\right)-\widehat{\theta}\left(s,\widehat{\vartheta}(s),\Proba_{\widehat{\vartheta}(s)}\right) \right]\right\|^2_{L_2^0}ds\\\\
&+9\widetilde{C_2} \displaystyle\int_{-\infty}^{t}\Ex\left\| U(t+e_n,s+e_n)\widehat{\theta}\left(s,\widehat{\vartheta}(s),\Proba_{\widehat{\vartheta}(s)}\right) - V(t,s)\widehat{\theta}\left(s,\widehat{\vartheta}(s),\Proba_{\widehat{\vartheta}(s)}\right)\right\|^2_{L_2^0}ds\\\\
\le& 9\,\widetilde{C_2}\,M^2\, \mathbf{K} \displaystyle\int_{-\infty}^{t}  e^{-2\delta(t-s)}\left(\Ex\left\| \vartheta_n(s)-\widehat{\vartheta}(s)\right\|^2-\mathcal{W}^2(\Proba_{\vartheta_n(s)},\Proba_{\widehat{\vartheta}(s)}\right)\,ds\\\\
&+9\widetilde{C_2}\,M^2\,\displaystyle\int_{-\infty}^{t}e^{-2\delta(t-s)}\Ex\left\| \theta\left(s+e_n,\widehat{\vartheta}(s),\Proba_{\widehat{\vartheta}(s)}\right)-\widehat{\theta}\left(s,\widehat{\vartheta}(s),\Proba_{\widehat{\vartheta}(s)}\right) \right\|^2_{L_2^0}ds\\\\
&+9\widetilde{C_2} \displaystyle\int_{-\infty}^{t}\Ex\left\| U(t+e_n,s+e_n)\widehat{\theta}\left(s,\widehat{\vartheta}(s),\Proba_{\widehat{\vartheta}(s)}\right) - V(t,s)\widehat{\theta}\left(s,\widehat{\vartheta}(s),\Proba_{\widehat{\vartheta}(s)}\right)\right\|^2_{L_2^0}ds\\\\
\le& 18\,\widetilde{C_2}\,M^2\, \mathbf{K} \displaystyle\int_{-\infty}^{t}  e^{-2\delta(t-s)}\Ex\left\| \vartheta_n(s)-\widehat{\vartheta}(s)\right\|^2\,ds + X_2(n),
\end{array}
\end{equation}
where
\begin{equation*}
\begin{array}{ll}
X_2(n)=&9\widetilde{C_2}\,M^2\,\displaystyle\int_{-\infty}^{t}e^{-2\delta(t-s)}\Ex\left\| \theta\left(s+e_n,\widehat{\vartheta}(s),\Proba_{\widehat{\vartheta}(s)}\right)-\widehat{\theta}\left(s,\widehat{\vartheta}(s),\Proba_{\widehat{\vartheta}(s)}\right) \right\|^2_{L_2^0}ds\\\\
&+9\widetilde{C_2} \displaystyle\int_{-\infty}^{t}\Ex\left\| U(t+e_n,s+e_n)\widehat{\theta}\left(s,\widehat{\vartheta}(s),\Proba_{\widehat{\vartheta}(s)}\right) - V(t,s)\widehat{\theta}\left(s,\widehat{\vartheta}(s),\Proba_{\widehat{\vartheta}(s)}\right)\right\|^2_{L_2^0}ds.
\end{array}
\end{equation*}
Similarly, we have $\lim\limits_{n\to \infty}X_2(n)=0$.

%

\begin{equation*}
\begin{array}{rl}
J_3\le &3\Ex\| \displaystyle\int_{-\infty}^{t } \left[U(t+e_n,s+e_n)\psi\left(s+e_n,\Proba_{\vartheta_n(s)}\right)-V(t ,s)\widehat{\psi}\left(s,\Proba_{\widehat{\vartheta}(s)}\right)\right]dB^H(s)\|^2\\\\
\le& 9\Ex\| \displaystyle\int_{-\infty}^{t}U(t+e_n,s+e_n)\left[\psi\left(s+e_n,\Proba_{\vartheta_n(s)}\right)-\psi\left(s+e_n,\Proba_{\widehat{\vartheta}(s)}\right) \right]dB^H(s)\|^2\\\\
&+9\Ex\| \displaystyle\int_{-\infty}^{t} U(t+e_n,s+e_n)\left[\psi\left(s+e_n,\Proba_{\widehat{\vartheta}(s)}\right)-\widehat{\psi}\left(s,\Proba_{\widehat{\vartheta}(s)}\right) \right]dB^H(s)\|^2\\\\
&+9\Ex\| \displaystyle\int_{-\infty}^{t}\left[ U(t+e_n,s+e_n)\widehat{\psi}\left(s,\Proba_{\widehat{\vartheta}(s)}\right) - V(t,s)\widehat{\psi}\left(s,\Proba_{\widehat{\vartheta}(s)}\right)\right]dB^H(s)\|^2\\\\
:=& P_1(n)+P_2(n)+P_3(n)
\end{array}
\end{equation*}

For $P_1(n)$, we have
\begin{equation}
\begin{array}{rl}
P_1(n)=&9\,\Ex\| \displaystyle\int_{-\infty}^{t } U(t+e_n ,s+e_n)\left[\psi\left(s+e_n,\Proba_{\vartheta_n(s)}\right)-{\psi}\left(s+e_n,\Proba_{\widehat{\vartheta}(s)}\right)\right]dB^H(s)\|^2\\\\


\le& 9H(2H-1)\displaystyle\int_{-\infty}^{t}\int_{-\infty}^{t}\|U(t+e_n,s+e_n)[\psi(s+e_n,\Proba_{\vartheta_n(s)})-\psi(s+e_n,\Proba_{\widehat{\vartheta}(s)})]\|_{\mathbb{L}_2} \\\\
&\qquad\times
\|U(t+e_n,r+e_n)[\psi(r+e_n,\Proba_{\vartheta_n(r)})-\psi(r+e_n,\Proba_{\widehat{\vartheta}(r)})]\|_{\mathbb{L}_2}|r-s|^{2H-2}dr\,ds\\\\
\le& 9H(2H-1)M^2 (\mathbf{K})^2\displaystyle\int_{-\infty}^{t}\int_{-\infty}^{t}e^{-\delta(t-s)}\mathcal{W}(\Proba_{\vartheta_n(s)},\Proba_{\widehat{\vartheta}(s)}) \\\\
&\qquad\qquad\qquad\times
e^{-\delta(t-r)}\mathcal{W}(\Proba_{\vartheta_n(r)},\Proba_{\widehat{\vartheta}(r)})|r-s|^{2H-2}dr\,ds\\\\
\le& 9H(2H-1)M^2 (\mathbf{K})^2\displaystyle\int_{0}^{+\infty}\int_{0}^{+\infty}e^{-\delta\,y}\,\mathcal{W}(\Proba_{\vartheta_n(t-y)},\Proba_{\widehat{\vartheta}(t-y)}) \\\\
&\qquad\qquad\qquad\times
e^{-\delta\,x}\,\mathcal{W}(\Proba_{\vartheta_n(t-x)},\Proba_{\widehat{\vartheta}(t-x)})|x-y|^{2H-2}dx\,dy\\\\
\le& 9H(2H-1)M^2 (\mathbf{K})^2\displaystyle\int_{0}^{+\infty}\bigg[e^{-\delta\,y}\,\mathcal{W}(\Proba_{\vartheta_n(t-y)},\Proba_{\widehat{\vartheta}(t-y)}) \\\\
&\qquad\times
\displaystyle\left(\int_{0}^{+\infty}e^{-\delta\,x}\,\mathcal{W}^2(\Proba_{\vartheta_n(t-x)},\Proba_{\widehat{\vartheta}(t-x)})dx\right)^{1/2}\left(\int_{0}^{+\infty}e^{-\delta\,x}|x-y|^{4H-4}dx\right)^{1/2}\,\bigg]dy\\\\
\le& 9H(2H-1)M^2 (\mathbf{K})^2\bigg[\displaystyle\int_{0}^{+\infty}e^{-\delta\,y}\,\mathcal{W}^2(\Proba_{\vartheta_n(t-y)},\Proba_{\widehat{\vartheta}(t-y)}) dy\bigg]^{1/2}\\\\
&\times
\displaystyle\left(\int_{0}^{+\infty}e^{-\delta\,x}\,\mathcal{W}^2(\Proba_{\vartheta_n(t-x)},\Proba_{\widehat{\vartheta}(t-x)})dx\right)^{1/2}\bigg[\displaystyle\int_{0}^{+\infty}e^{-\delta\,y}\,\int_{0}^{+\infty}e^{-\delta\,x}|x-y|^{4H-4}dx\,dy\bigg]^{1/2}\\\\
\le& 9H(2H-1)M^2 (\mathbf{K})^2\displaystyle\int_{0}^{+\infty}e^{-\delta\,y}\,\Ex\|\vartheta_n(t-y)-\widehat{\vartheta}(t-y)\|^2 dy\\\\
&\qquad\qquad\times
\left(\displaystyle\int_{0}^{+\infty}e^{-\delta\,y}\,\int_{0}^{+\infty}e^{-\delta\,x}|x-y|^{4H-4}dx\,dy\right)^{1/2}\\\\
\le& 9H(2H-1)M^2 (\mathbf{K})^2\displaystyle\int_{0}^{+\infty}e^{-\delta\,y}\,\Ex\|\vartheta_n(t-y)-\widehat{\vartheta}(t-y)\|^2 dy\\\\
&\times
\left(\displaystyle\int_{0}^{+\infty}e^{-\delta\,y}\,\int_{0}^{y}e^{-\delta\,x}|x-y|^{4H-4}dx\,dy+\displaystyle\int_{0}^{+\infty}e^{-\delta\,y}\,\int_{y}^{+\infty}e^{-\delta\,x}|x-y|^{4H-4}dx\,dy\right)^{1/2}\\\\
\end{array}
\end{equation}
\begin{equation}\label{JP1}
\begin{array}{rl}
\le& 9H(2H-1)M^2 (\mathbf{K})^2\displaystyle\int_{-\infty}^{0}e^{-\delta\,(t-s)}\,\Ex\|\vartheta_n(s)-\widehat{\vartheta}(s)\|^2 ds\,
\left(L_1+L_2\right)^{1/2}
\end{array}
\end{equation}
where
\begin{equation}
\begin{array}{rl}

L_1+L_2=&
\displaystyle\int_{0}^{+\infty}e^{-\delta\,y}\,\int_{0}^{y}e^{-\delta\,x}|x-y|^{4H-4}dx\,dy+\displaystyle\int_{0}^{+\infty}e^{-\delta\,y}\,\int_{y}^{+\infty}e^{-\delta\,x}|x-y|^{4H-4}dx\,dy\\\\

=&
\displaystyle\int_{0}^{+\infty}e^{-\delta\,y}\,\int_{0}^{y}|x-y|^{4H-4}dx\,dy+\displaystyle\int_{0}^{+\infty}e^{-\delta\,y}\,\int_{0}^{+\infty}e^{-\delta\,(y+\tau)}\tau^{4H-4}d\tau\,dy\\\\

=&
\displaystyle\int_{0}^{+\infty}e^{-\delta\,y}\,\dfrac{x^{4H-3}}{4H-3}\,dy+\displaystyle\int_{0}^{+\infty}e^{-\delta\,y}\,\int_{0}^{+\infty}e^{-\delta\,(y+\tau)}\tau^{4H-4}d\tau\,dy\\\\
=&
\dfrac{\Gamma(4H-2)}{(4H-2)\delta^{4H-2}}+\dfrac{\Gamma(4H-3)}{2\delta^{4H-2}}\\\\
\end{array}
\end{equation}
For $P_2(n)$, using similar calculation as in \eqref{JP1}, we obtain

\begin{equation}\label{JP2}
\begin{array}{rl}
P_2(n)=&9\,\Ex\| \displaystyle\int_{-\infty}^{t } U(t+e_n ,s+e_n)\left[{\psi}\left(s+e_n,\Proba_{\widehat{\vartheta}(s)}\right)-\widehat{\psi}\left(s,\Proba_{\widehat{\vartheta}(s)}\right)\right]dB^H(s)\|^2\\\\
\le& 9H(2H-1)\displaystyle\int_{-\infty}^{t}\int_{-\infty}^{t}\|U(t+e_n,s+e_n)[{\psi}\left(s+e_n,\Proba_{\widehat{\vartheta}(s)}\right)-\widehat{\psi}\left(s,\Proba_{\widehat{\vartheta}(s)}\right)]\|_{\mathbb{L}_2} \\\\
&\qquad\qquad\times \,
\|U(t+e_n,r+e_n)[{\psi}\left(r+e_n,\Proba_{\widehat{\vartheta}(r)}\right)-\widehat{\psi}\left(r,\Proba_{\widehat{\vartheta}(r)}\right)]\|_{\mathbb{L}_2}|r-s|^{2H-2}dr\,ds\\\\
\le& 9H(2H-1)\,M^2\displaystyle\int_{-\infty}^{t}\int_{-\infty}^{t}e^{-\delta(t-s)}\|{\psi}\left(s+e_n,\Proba_{\widehat{\vartheta}(s)}\right)-\widehat{\psi}\left(s,\Proba_{\widehat{\vartheta}(s)}\right)\|_{\mathbb{L}_2} \\\\
&\qquad\qquad\times\, e^{-\delta(t-r)}
\|{\psi}\left(r+e_n,\Proba_{\widehat{\vartheta}(r)}\right)-\widehat{\psi}\left(r,\Proba_{\widehat{\vartheta}(r)}\right)\|_{\mathbb{L}_2}|r-s|^{2H-2}dr\,ds\\\\
\le& 9H(2H-1)\,M^2\,(L_1+L_2)^{1/2}\Big(\displaystyle\int_{-\infty}^{t}e^{-\delta(t-s)}\|{\psi}\left(s+e_n,\Proba_{\widehat{\vartheta}(s)}\right)-\widehat{\psi}\left(s,\Proba_{\widehat{\vartheta}(s)}\right)\|^2_{\mathbb{L}_2}ds\Big) \\\\

\end{array}
\end{equation}
Since $\psi$ is square-mean almost automorphic,
$\displaystyle\sup_{t\in \mathbb{R}}\|{\psi}\left(t+e_n,\Proba_{\widehat{\vartheta}(t)}\right)\|_{\mathbb{L}_2}<\infty$ and  $\sup_{t\in \mathbb{R}}\|\widehat{\psi}\left(t,\Proba_{\widehat{\vartheta}(t)}\right)\|^2_{\mathbb{L}_2}<\infty$
then by the Lebesgue dominated convergence theorem, it follows that $$\lim\limits_{n\to \infty}P_2(n)=0.$$
For $P_3(n)$, using similar calculation as in the estimation of $P_1(n)$, we have
\begin{equation*}
\begin{array}{rl}
P_3(n)=&9\Ex\| \displaystyle\int_{-\infty}^{t}\left[ U(t+e_n,s+e_n)\widehat{\psi}\left(s,\Proba_{\widehat{\vartheta}(s)}\right) - V(t,s)\widehat{\psi}\left(s,\Proba_{\widehat{\vartheta}(s)}\right)\right]dB^H(s)\|^2\\\\
\le& 9H(2H-1)\displaystyle\int_{-\infty}^{t}\|U(t+e_n,s+e_n)\widehat{\psi}\left(s,\Proba_{\widehat{\vartheta}(s)}\right) - V(t,s)\widehat{\psi}\left(s,\Proba_{\widehat{\vartheta}(s)}\right)\|_{\mathbb{L}_2} \\\\
&\qquad\qquad\times \,
\bigg[\displaystyle\int_{-\infty}^{t}\left\|U(t+e_n,r+e_n)\widehat{\psi}\left(r,\Proba_{\widehat{\vartheta}(r)}\right) - V(t,r)\widehat{\psi}\left(r,\Proba_{\widehat{\vartheta}(r)}\right)\right\|_{\mathbb{L}_2}|r-s|^{2H-2}dr\,\bigg]ds.
\end{array}
\end{equation*}
Since
\begin{equation*}
\begin{array}{rl}
\displaystyle\int_{-\infty}^{t}&\Bigg(\|U(t+e_n,s+e_n)\widehat{\psi}\left(s,\Proba_{\widehat{\vartheta}(s)}\right) - V(t,s)\widehat{\psi}\left(s,\Proba_{\widehat{\vartheta}(s)}\right)\|_{\mathbb{L}_2} \\\\
&\qquad\qquad\times \,
\bigg[\displaystyle\int_{-\infty}^{t}\left\|U(t+e_n,r+e_n)\widehat{\psi}\left(r,\Proba_{\widehat{\vartheta}(r)}\right) - V(t,r)\widehat{\psi}\left(r,\Proba_{\widehat{\vartheta}(r)}\right)\right\|_{\mathbb{L}_2}|r-s|^{2H-2}dr\,\bigg]\Bigg)ds\\\\
\le& 4M^{2}\displaystyle\int_{-\infty}^{t}\Bigg(e^{-\delta(t-s)}\|\widehat{\psi}\left(s,\Proba_{\widehat{\vartheta}(s)}\right)\|_{\mathbb{L}_2}\int_{-\infty}^{t}e^{-\delta(t-s)} e^{-\delta(t-r)}
\|\widehat{\psi}\left(r,\Proba_{\widehat{\vartheta}(r)}\right)\|_{\mathbb{L}_2}|r-s|^{2H-2}dr\,\Bigg)ds.\\\\
\end{array}
\end{equation*}
Using similar calculation as permormed in $P_1(n)$ we get
\begin{equation*}
\begin{array}{rl}
\displaystyle\int_{-\infty}^{t}\Bigg(e^{-\delta(t-s)}\|\widehat{\psi}\left(s,\Proba_{\widehat{\vartheta}(s)}\right)\|_{\mathbb{L}_2}\int_{-\infty}^{t}&e^{-\delta(t-s)} e^{-\delta(t-r)}
\|\widehat{\psi}\left(r,\Proba_{\widehat{\vartheta}(r)}\right)\|_{\mathbb{L}_2}|r-s|^{2H-2}dr\,\Bigg)ds\\
\leq &\left(L_1+L_2\right)^{1/2}\displaystyle\int_{-\infty}^{0}e^{-\delta\,(t-s)}\,\|\widehat{\psi}\left(s,\Proba_{\widehat{\vartheta}(s)}\right)\|^2_{\mathbb{L}_2} ds\,\\
\le& \displaystyle\frac{\left(L_1+L_2\right)^{1/2}}{\delta}\sup_{s\in \mathbb{R}}\|\widehat{\psi}\left(s,\Proba_{\widehat{\vartheta}(s)}\right)\|^2_{\mathbb{L}_2}<\infty
\end{array}
\end{equation*}
Noting \eqref{EVF}, we have
\begin{equation*}
\Ex\|U(t+e_n,s+e_n)\widehat{\psi}\left(s,\Proba_{\widehat{\vartheta}(s)}\right) - V(t,s)\widehat{\psi}\left(r,\Proba_{\widehat{\vartheta}(s)}\right)\|^2_{\mathbb{L}_2} \to 0 \mbox{ as } n\to \infty.
\end{equation*}
Therefore, due to the Lebesgue dominated convergence theorem, we obtain
 \begin{equation}\label{JP3}
\lim\limits_{n\to \infty} P_3(n)=0.
\end{equation}
From \eqref{J2} and \eqref{JP1}, we deduce that
\begin{equation}
\begin{array}{rl}
&\Ex\| \vartheta_n(t)-\widehat{\vartheta}(t)\|^2\\\\
\le& \dfrac{18\mathbf{K}\,M^2}{\delta}\, \displaystyle\int_{-\infty}^{t}\,e^{-\delta(t-s)}\; \Ex\|\vartheta_n(s)-\widehat{\vartheta}(s)\|^2\,ds \\\\
&+ 18\,\widetilde{C_2}\,M^2\, \mathbf{K} \displaystyle\int_{-\infty}^{t}  e^{-2\delta(t-s)}\Ex\left\| \vartheta_n(s)-\widehat{\vartheta}(s)\right\|^2\,ds \\\\
&+9H(2H-1)M^2 (\mathbf{K})^2\displaystyle\int_{-\infty}^{0}e^{-\delta\,(t-s)}\,\Ex\|\vartheta_n(s)-\widehat{\vartheta}(s)\|^2 ds\,
\left(L_1+L_2\right)^{1/2}+X(n)\\\\
\end{array}
\end{equation}
where $X(n)= X_{1}({n})+ X_2(n)+P_2(n)+P_3(n)$ such that $\lim\limits_{n\to\infty}X(n)=0$.
By Lemma \ref{Gronw}, and the fact that $\left(\dfrac{18\mathbf{K}\,M^2}{\delta}\,+18\,\widetilde{C_2}\,M^2\, \mathbf{K}+9H(2H-1)M^2 (\mathbf{K})^2\right)<1$, it follows that
\begin{equation*}
\Ex\| \vartheta_n(t)-\widehat{\vartheta}(t)\|^2\to 0,\mbox{ as }n\to \infty \mbox{ for each }t\in\mathbb{R}
\end{equation*}
Since $\vartheta(t+e_n)$ has the same distribution as $\vartheta_n(t)$, we derive that $\vartheta(t+e_n)\to \widehat{\vartheta}(t)$ in distribution as $n$ goes to $\infty$. Similarly, we have $\widehat{\vartheta}(t-e_n)\to \vartheta(t)$ in distribution as $n\to\infty$ for each $t\in \mathbb{R}$.
 The proof is complete.

\end{proof}

\section{Weighted pseudo automorphic mild solutions in distribution for Equ.\eqref{C1}}
In this section, we prove the existence and uniqueness of weighted pseudo almost automorphic solutions in distribution for Equ.\eqref{C1}.
 Assume that the following hypotheses hold:
 \begin{description}
 \item[$\mathbf{(H_3)}$] The functions $f: \mathbb{R}\times \Hilbert\times \Borel(\Hilbert)\to \Hilbert$, $\theta : \mathbb{R}\times \Hilbert\times \Borel(\Hilbert)\to \mathbb{L}_2^0$ and $\Psi : \mathbb{R}\times \Borel(\Hilbert)\to \mathbb{L}_2$ are square-mean weighted pseudo almost automorphic in $t\in \mathbb{R}$ with respect to $\rho \in \mathcal{M}^{inv}$ with $f=f_1+\widetilde{f}$, $\theta=\theta_1+\widetilde{\theta}$, $\psi=\psi_1+\widetilde{\psi}$ such that
 $f_1:\mathbb{R}\times \Hilbert\times \Borel(\Hilbert)\to\Hilbert$,
 $\theta_1\;:\; \mathbb{R}\times \Hilbert\times \Borel(\Hilbert) \to \mathbb{L}_2^0$,
 $\psi_1\, :\,\mathbb{R}\times \Borel(\Hilbert)\to\mathbb{L}_2$ are square almost automorphic process in $t\in\mathbb{R}$ for each $\vartheta\in \mathcal{L}^2(\Proba, \Hilbert)$ and $\widetilde{f}(\cdot,\vartheta(\cdot),\Proba_{\vartheta(\cdot)})\in SBC_0(\mathbb{R},\rho)$, $\widetilde{\theta}(\cdot,\vartheta(\cdot),\Proba_{\vartheta(\cdot)})\in SBC_0(\mathbb{R},\rho)$  and $\widetilde{\psi}(\cdot,\Proba_{\vartheta(\cdot)})\in SBC_0(\mathbb{R}\times \Borel(\Hilbert,\rho)$ for each $\vartheta\in \mathcal{L}^2(\Proba, \Hilbert)$  .
 \item[$\mathbf{(H_4)}$]For all $\vartheta,\widetilde{\vartheta}\in \mathcal{L}^2(\Proba,\Kilbert)$, $\nu_1,\nu_2\in \Borel(\Hilbert)$ and $t\in \mathbb{R}$, there exists a constant $\mathbf{K}>0$ such that
 $$
 \begin{array}{rl}
\|f(t,\vartheta(t),\nu_1)-f(t,\widetilde{\vartheta}(t),\nu_2)\|^2\le& \mathbf{K} \left(\|\vartheta(t)-\widetilde{\vartheta}(t)\|^2 + \mathcal{W}^2(\nu_1,\nu_2) \right),\\\\
\|f_1(t,\vartheta(t),\nu_1)-f_1(t,\widetilde{\vartheta}(t),\nu_2)\|^2\le& \mathbf{K} \left(\|\vartheta(t)-\widetilde{\vartheta}(t)\|^2 + \mathcal{W}^2(\nu_1,\nu_2) \right),\\\\
 \|\theta(t,\vartheta(t),\nu_1)-\theta(t,\widetilde{\vartheta}(t),\nu_2)\|^2_{\mathbb{L}_2^0}\le& \mathbf{K} \left(\|\vartheta(t)-\widetilde{\vartheta}(t)\|^2 + \mathcal{W}^2(\nu_1,\nu_2) \right),\\\\
  \|\theta_1(t,\vartheta(t),\nu_1)-\theta_1(t,\widetilde{\vartheta}(t),\nu_2)\|^2_{\mathbb{L}_2^0}\le& \mathbf{K} \left(\|\vartheta(t)-\widetilde{\vartheta}(t)\|^2 + \mathcal{W}^2(\nu_1,\nu_2) \right),\\\\
\|\psi(t,\nu_1)-\psi(t,\nu_2)\|_{\mathbb{L}_2}\le& \mathbf{K}\;  \mathcal{W}(\nu_1,\nu_2),\\\\
\|\psi_1(t,\nu_1)-\psi_1(t,\nu_2)\|_{\mathbb{L}_2}\le& \mathbf{K}\;  \mathcal{W}(\nu_1,\nu_2) .
\end{array}
$$
 \end{description}

\begin{theorem}\label{Th2} Suppose that conditions $\mathbf{(H_{0})}$, $\mathbf{(H_3)}$ and $\mathbf{(H_4)}$  hold. Then Equ.\eqref{C1} has a unique $\mathcal{L}^2$-bounded solution provided that
	\begin{equation}\label{Cond21}
	2\mathbf{K}M^2\left(\dfrac{1}{\delta^2}+\dfrac{\widetilde{C_2}}{2\delta}\right)<1
	\end{equation} and
	\begin{equation}\label{Cond21+}
	\dfrac{\beta_2}{\delta}\left[1+\,\dfrac{\beta_2}{\delta}\right]<1,
	\end{equation}
	where $\beta_2$ is a positive constant (see \eqref{beta12}).\\	Furthermore, this unique $\mathcal{L}^2$-bounded solution is weighted pseudo almost automorphic in distribution.%
\end{theorem}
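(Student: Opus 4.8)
The plan is to obtain the existence and uniqueness exactly as in Theorem \ref{Th1}. Indeed, the smallness conditions \eqref{Cond21} and \eqref{Cond21+} coincide with \eqref{Cond1} and \eqref{Cond1+}, and the Lipschitz bounds in $\mathbf{(H_4)}$ reproduce those of $\mathbf{(H_2)}$; hence the contraction argument for the operator $\Upsilon$ (yielding the fixed point $\vartheta_\mu$ under \eqref{Cond21}) and the contraction argument for the law-map $\mathbf{Q}$ on $\mathcal{C}_b(\mathbb{R},\Borel(\Hilbert))$ (under \eqref{Cond21+}, with $\beta_2$ as in \eqref{beta12}) carry over verbatim. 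This produces a unique $\mathcal{L}^2$-bounded mild solution $\vartheta$ whose law is the fixed point of $\mathbf{Q}$. The substantive new content is to show that this $\vartheta$ is weighted pseudo almost automorphic in distribution, i.e. admits a decomposition $\vartheta=\vartheta_1+\phi$ with $\vartheta_1$ almost automorphic in distribution and $\phi\in SBC_0(\mathbb{R},\mathcal{L}^2(\Proba,\Hilbert),\rho)$.

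The strategy is to prove that $\Upsilon$ leaves the space $SWPAA(\mathbb{R},\mathcal{L}^2(\Proba,\Hilbert),\rho)$ (taken in the distribution sense) invariant, after which the contraction already established forces its unique fixed point into this space. First I would record a composition step: using the splitting $f=f_1+\widetilde f$, $\theta=\theta_1+\widetilde\theta$, $\psi=\psi_1+\widetilde\psi$ from $\mathbf{(H_3)}$ together with the Lipschitz estimates of $\mathbf{(H_4)}$, if $\vartheta=\vartheta_1+\phi\in SWPAA$ then each coefficient process $f(\cdot,\vartheta(\cdot),\Proba_{\vartheta(\cdot)})$, $\theta(\cdot,\vartheta(\cdot),\Proba_{\vartheta(\cdot)})$ and $\psi(\cdot,\Proba_{\vartheta(\cdot)})$ again decomposes into an $SAA$ part (coming from $f_1,\theta_1,\psi_1$ evaluated at the almost automorphic component) plus an $SBC_0$ remainder; the nonlinear cross terms are absorbed into the $SBC_0$ part by the Lipschitz bounds and the boundedness of $\vartheta$.

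Next I would show each of the three convolution operators preserves the decomposition. For the almost automorphic in distribution component $\vartheta_1$, the estimates of Step 3 of Theorem \ref{Th1} apply unchanged (replacing $f,\theta,\psi$ by their $SAA$ parts), giving the almost automorphic in distribution limit via Lemma \ref{Gronw} and condition \eqref{Cond2}-type bounds. For the $SBC_0$ component $\phi$, I would verify directly that $\frac{1}{m(q,\rho)}\int_{-q}^{q}\Ex\|(\text{convolution})(t)\|^2\rho(t)\,dt\to 0$ as $q\to+\infty$: for the Lebesgue and Wiener integrals this follows from Fubini, the exponential decay $\|U(t,s)\|\le Me^{-\delta(t-s)}$, Lemma \ref{Bridge}, and the translation invariance $\rho\in\mathcal{M}^{inv}$ of $SBC_0$; for the fractional term one bounds the second moment by the double integral against the kernel $|r-s|^{2H-2}$ and reduces it, exactly as in the $\overline{J_3}$ computation, to a single exponential convolution controlled by the constant $L_1+L_2$ from \eqref{beta12}, after which the weighted average again vanishes by dominated convergence and translation invariance.

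The main obstacle will be precisely this last point: controlling the weighted ergodic mean of the fractional-Brownian stochastic convolution, whose second moment is governed by a nonlocal, singularly weighted double integral rather than a pointwise bound, so that the vanishing of the $SBC_0$ mean does not transfer termwise. The resolution is to dominate the double integral by an $e^{-\delta|\cdot|}$-convolution of $\Ex\|\phi\|^2$ (using the Cauchy--Schwarz splitting and the finiteness of $\int_0^\infty\int_0^\infty|v-u|^{4H-4}e^{-\delta(u+v)}\,du\,dv$ already appearing in the proof of Theorem \ref{Th1}), and then to apply the translation invariance of $\rho$ to commute the weighted mean with the convolution. Once invariance of $SWPAA$ is secured, the closed-subspace contraction property established in the existence step localizes the fixed point inside $SWPAA$, so the unique $\mathcal{L}^2$-bounded solution is weighted pseudo almost automorphic in distribution, which completes the proof.
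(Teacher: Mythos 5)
Your existence-and-uniqueness step is fine and coincides with the paper's (the paper runs the contraction for the coupled operator $\mathcal{S}$, i.e.\ with $\Proba_{\vartheta(s)}$ inside, invoking Step 1 of Theorem \ref{Th1} and the law-map argument). The problem is the second half. Your architecture — show the solution operator leaves $SWPAA$-in-distribution invariant, then conclude that ``the closed-subspace contraction property localizes the fixed point inside $SWPAA$'' — requires the class of weighted pseudo almost automorphic in distribution processes to be a \emph{closed} (hence complete) subset of $\mathcal{C}_b(\mathbb{R},\mathcal{L}^2(\Proba,\Hilbert))$. This is established nowhere in the paper, and it is not routine: the class is defined by an existential decomposition $\vartheta=\vartheta_1+\phi$ whose first component is only almost automorphic \emph{in distribution}; the standard tools used to prove closedness of pseudo-almost automorphic classes (uniqueness of the decomposition and the bound $\|\vartheta_1\|_\infty\le\|\vartheta\|_\infty$ for the AA part) are not available in the distribution setting, and the class is not even obviously a linear space. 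Without completeness of this set, Banach's fixed point theorem restricted to it yields nothing, so the final localization step fails as stated. A second genuine gap is the claim that ``the estimates of Step 3 of Theorem \ref{Th1} apply unchanged'' to the convolution of the $SAA$ parts with an \emph{arbitrary} $SWPAA$ input. Step 3 is not a statement about convolutions of a given AA-in-distribution integrand: it compares solutions $\vartheta_n$ of shifted \emph{equations} with $\widehat{\vartheta}$, and it converts distributional hypotheses into square-mean convergence precisely because the process being estimated reappears inside the integrals (this is where Lemma \ref{Gronw} and condition \eqref{Cond2} enter). For a free input $\vartheta_1$ that is only AA in distribution, the shifted integrand $f_1(s+e_n,\vartheta_1(s+e_n),\Proba_{\vartheta_1(s+e_n)})$ converges only in law, and convergence in law of an integrand does not pass to the stochastic convolution (a functional of the whole path jointly with $W$ and $B^H$) without joint-law/tightness or Skorokhod-representation arguments. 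Relatedly, your invariance would have to be run for the coupled mean-field operator $\mathcal{S}$, not for $\Upsilon$ with a frozen measure flow $\mu$; otherwise the WPAA property of the measure argument is itself an unproved hypothesis.

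For contrast, the paper avoids both issues by proceeding a posteriori: it first produces the unique fixed point $\vartheta^{\star}$ of $\mathcal{S}$, then uses the splitting of $\mathbf{(H_3)}$ to write $\vartheta^{\star}=\vartheta^{\star}_1+\vartheta^{\star}_2$, where $\vartheta^{\star}_1$ collects the convolutions of $f_1,\theta_1,\psi_1$ evaluated along $\vartheta^{\star}$ (claimed AA in distribution via Theorem \ref{Th1}) and $\vartheta^{\star}_2$ collects the convolutions of $\widetilde{f},\widetilde{\theta},\widetilde{\psi}$, which is then shown to lie in $SBC_0(\mathbb{R},\rho)$ by weighted ergodic estimates — the drift and Wiener terms following \cite{Chen}, the fractional term via the bound of \cite{Mbaye} plus Fubini, translation invariance of $\rho$, and dominated convergence. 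Thus no closedness of $SWPAA$ and no invariance of the solution map are needed. Your handling of the fractional ergodic mean (domination by exponential convolutions and the finiteness of the kernel integral, then Fubini and $\rho\in\mathcal{M}^{inv}$) is essentially the paper's and is the sound part of your plan; it is the surrounding fixed-point architecture that has the gaps.
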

\begin{proof}
	Consider the operator  $\mathbf{\mathcal{S}} : \mathcal{C}_b(\mathbb{R},\mathcal{L}^2(\Proba,\Hilbert)) \to \mathcal{C}_b(\mathbb{R},\mathcal{L}^2(\Proba,\Hilbert))$ defined by
	\begin{equation}
	\begin{array}{ll}
	(\mathbf{\mathcal{S}}\vartheta)(t)=&\displaystyle\int_{-\infty}^{t} U(t,s)f(s,\vartheta(s),\Proba_{\vartheta(s)})ds + \displaystyle\int_{-\infty}^{t} U(t,s)\theta(s,\vartheta(s),\Proba_{\vartheta(s)})dW(s)\\\\
	& + \displaystyle\int_{-\infty}^{t} U(t,s)\psi(s,\Proba_{\vartheta(s)})dB^H(s).
	\end{array}
	\end{equation}
From the \textbf{Step 1} in the proof of Theorem \ref{Th1}, we derive that $\mathbf{\mathcal{S}}$ is a contraction mapping in $\mathcal{C}_b(\mathbb{R},\mathcal{L}^2(\Proba,\Hilbert))$. Thus, $\mathbf{\mathcal{S}}$ has a unique fixe point $\vartheta^{\star}(t)$.


	By condition $\mathbf{(H_3)}$,  there exist
	$f_1\in SAA(\mathbb{R}\times \Hilbert\times \Borel(\Hilbert),\Hilbert)$,
	 $\theta_1\in SAA(\mathbb{R}\times \Hilbert\times \Borel(\Hilbert),\mathbb{L}_2^0)$,
	$\psi_1\in SAA(\mathbb{R}\times \Borel(\Hilbert),\mathbb{L}_2)$, $\widetilde{f}\in SBC_0(\mathbb{R}\times \Hilbert\times \Borel(\Hilbert,\rho)$, $\widetilde{\theta}\in SBC_0(\mathbb{R}\times \Hilbert\times \Borel(\Hilbert,\rho)$  and $\widetilde{\psi}\in SBC_0(\mathbb{R}\times \Borel(\Hilbert,\rho)$  such that
	\begin{equation*}
	\begin{array}{rl}
f(t,\vartheta(t),\mu(t))&=f_1(t,\vartheta(t),\mu(t))+\widetilde{f}(t,\vartheta(t),\mu(t)),\\ \theta(t,\vartheta(t),\mu(t))&=\theta_1(t,\vartheta(t),\mu(t)+\widetilde{\theta}(t,\vartheta(t),\mu(t)),\\
\psi(t,\mu(t))&=\psi_1(t,\vartheta(t),\mu(t)+\widetilde{\psi}(t,\vartheta(t),\mu(t))
	\end{array}
	\end{equation*}
for all $\vartheta\in \mathcal{C}_b(\mathbb{R},\mathcal{L}^2(\Proba,\Hilbert)) $ and $\mu \in \mathcal{C}_b(\mathbb{R},\Borel(\Hilbert))$.
We have
\begin{equation*}
\begin{array}{rl}
(\mathbf{\mathcal{S}}\vartheta^{\star})(t)=&\displaystyle\int_{-\infty}^{t} U(t,s)f(s,\vartheta^{\star}(s),\Proba_{\vartheta^{\star}(s)})ds + \displaystyle\int_{-\infty}^{t} U(t,s)\theta(s,\vartheta^{\star}(s),\Proba_{\vartheta^{\star}(s)})dW(s)\\\\
& + \displaystyle\int_{-\infty}^{t} U(t,s)\psi(s,\Proba_{\vartheta^{\star}(s)})dB^H(s)\\\\
=&\Big[\displaystyle\int_{-\infty}^{t} U(t,s)f_1(s,\vartheta^{\star}(s),\Proba_{\vartheta^{\star}(s)})ds + \displaystyle\int_{-\infty}^{t} U(t,s)\theta_1(s,\vartheta^{\star}(s),\Proba_{\vartheta^{\star}(s)})dW(s)\\\\
& + \displaystyle\int_{-\infty}^{t} U(t,s)\psi_1(s,\Proba_{\vartheta^{\star}(s)})dB^H(s)\Big]
+\Big[\displaystyle\int_{-\infty}^{t} U(t,s)\widetilde{f}(s,\vartheta^{\star}(s),\Proba_{\vartheta^{\star}(s)})ds\\\\
& + \displaystyle\int_{-\infty}^{t} U(t,s)\widetilde{\theta}(s,\vartheta^{\star}(s),\Proba_{\vartheta^{\star}(s)})dW(s) + \displaystyle\int_{-\infty}^{t} U(t,s)\widetilde{\psi}(s,\Proba_{\vartheta^{\star}(s)})dB^H(s)\Big]\\\\
\vartheta^{\star}(t)=&\vartheta^{\star}_{1}(t) +\vartheta^{\star}_{2}(t),
\end{array}
\end{equation*}
where \begin{equation*}
\begin{array}{rl}
\vartheta^{\star}_{1}(t)=&\displaystyle\int_{-\infty}^{t} U(t,s)f_1(s,\vartheta^{\star}(s),\Proba_{\vartheta^{\star}(s)})ds + \displaystyle\int_{-\infty}^{t} U(t,s)\theta_1(s,\vartheta^{\star}(s),\Proba_{\vartheta^{\star}(s)})dW(s)\\\\
&+ \displaystyle\int_{-\infty}^{t} U(t,s)\psi_1(s,\Proba_{\vartheta^{\star}(s)})dB^H(s)
\end{array}
\end{equation*} and
\begin{equation*}
\begin{array}{rl}
\vartheta^{\star}_{2}(t)=&\displaystyle\int_{-\infty}^{t} U(t,s)\widetilde{f}(s,\vartheta^{\star}(s),\Proba_{\vartheta^{\star}(s)})ds
+ \displaystyle\int_{-\infty}^{t} U(t,s)\widetilde{\theta}(s,\vartheta^{\star}(s),\Proba_{\vartheta^{\star}(s)})dW(s)\\\\
& + \displaystyle\int_{-\infty}^{t} U(t,s)\widetilde{\psi}(s,\Proba_{\vartheta^{\star}(s)})dB^H(s).
\end{array}
\end{equation*}
Using $\mathbf{(H_{0})}$, $\mathbf{(H_3)}$ and $\mathbf{(H_4)}$, it follows from Theorem \ref{Th1} that $\vartheta^{\star}_{1}$ is almost automorph in distribution.
In order to show that $\vartheta^{\star}(t)$ is a square-mean weighted pseudo almost automorphic process, it is sufficient to  prove  that $\vartheta^{\star}_{2}\in SBC_0(\mathbb{R}, \rho)$.\\
Similar to the \textbf{Step 3} in the proof of Theorem \ref{Th1}, $\vartheta^\star_2(t)$ is $\mathcal{L}^2$-continuous and $\mathcal{L}^2$-bounded.\\
To conclude the proof, we must check that \begin{equation*}
\lim\limits_{q\to +\infty} \dfrac{1}{m(q,\rho)}\displaystyle\int_{-q}^{q}\Ex\|\vartheta^\star_2(t)\|^2\rho(t)\,dt=0.
\end{equation*}
Observe that
\begin{equation}
\begin{array}{rl}
&  \dfrac{1}{m(q,\rho)}\displaystyle\int_{-q}^{q}\Ex\|\vartheta^\star_2(t)\|^2\rho(t)\,dt\\\\
\le&   \dfrac{3}{m(q,\rho)}\displaystyle\int_{-q}^{q}\Ex\left\|\displaystyle\int_{-\infty}^{t} U(t,s)\widetilde{f}(s,\vartheta^{\star}(s),\Proba_{\vartheta^{\star}(s)})ds\,\right\|^2\,\rho(t)\,dt\\\\
&+  \dfrac{3}{m(q,\rho)}\displaystyle\int_{-q}^{q}\Ex\left\| \displaystyle\int_{-\infty}^{t} U(t,s)\widetilde{\theta}(s,\vartheta^{\star}(s),\Proba_{\vartheta^{\star}(s)})dW(s)\right\|^2\,\rho(t)\,dt\\\\
& +   \dfrac{3}{m(q,\rho)}\displaystyle\int_{-q}^{q}\Ex\left\|\displaystyle\int_{-\infty}^{t} U(t,s)\widetilde{\psi}(s,\Proba_{\vartheta^{\star}(s)})dB^H(s)\right\|^2\,\rho(t)\,dt\\\\
\end{array}
\end{equation}
By an argument  similar the one used in the the proof of \cite[Theorem 4.1]{Chen} with minor modifications, we get
\begin{equation}\label{LIM1}
\begin{array}{rl}
&      \lim\limits_{q\to +\infty} \dfrac{3}{m(q,\rho)}\displaystyle\int_{-q}^{q}\Ex\left\|\displaystyle\int_{-\infty}^{t} U(t,s)\widetilde{f}(s,\vartheta^{\star}(s),\Proba_{\vartheta^{\star}(s)})ds\,\right\|^2\,\rho(t)\,dt\\\\
&+ \lim\limits_{q\to +\infty}  \dfrac{3}{m(q,\rho)}\displaystyle\int_{-q}^{q}\Ex\left\| \displaystyle\int_{-\infty}^{t} U(t,s)\widetilde{\theta}(s,\vartheta^{\star}(s),\Proba_{\vartheta^{\star}(s)})dW(s)\right\|^2\,\rho(t)\,dt\\\\
&=0 .
\end{array}
\end{equation}
On the other hand, an argument similar  to the proof of Theorem 6.1 in Ref \cite{Mbaye} with minor modifications, enables us to conclude that
\begin{equation*}
\begin{array}{rl}
&\Ex\big\|\displaystyle\int_{-\infty}^t U(t,s) \widetilde{\psi}(s,\Proba_{\vartheta^{\star}(s)})\,d B^H(s)\big\|^2\\\\
\leq &C(H,M)\big\|\widetilde{\psi}\big\|_{\infty}\big\{ \displaystyle\int_{0}^{\infty}e^{-\delta v}\big\|\widetilde{\psi}(t-v,\Proba_{\vartheta^{\star}(t-v)})\big\|_{{\mathbb L}_2}\frac{v^{2H-1}}{2H-1}\,dv\\\\
&\quad \quad~~~~~~~~~~~~~~~~~~~~ + \dfrac{ \Gamma(2H-1)}{\delta^{2H-1}}\displaystyle\int_{0}^{\infty}e^{-2\delta v}\big\| \widetilde{\psi}(t-v,\Proba_{\vartheta^{\star}(t-v)})\big\|_{{\mathbb L}_2}\,dv\big\}\\\\
&\leq B_{1}\displaystyle\int_{0}^{\infty}e^{-\delta v}\big\| \widetilde{\psi}(t-v,\Proba_{\vartheta^{\star}(t-v)})\big\|_{{\mathbb L}_2}v^{2H-1}\,dv
+ B_{2}\displaystyle\int_{0}^{\infty}e^{-2\delta v}\big\| \widetilde{\psi}(t-v,\Proba_{\vartheta^{\star}(t-v)})\big\|_{{\mathbb L}_2}\,dv,
\end{array}
\end{equation*}
where
\begin{align*}
&C(H,M)=H(2H-1)M^2,\\ &B_{1}:=\frac{C(H,M)\big\|\widetilde{\psi}\big\|_{\infty}}{2H-1} \\ &B_{2}:=\frac{C(H,M)\big\|\widetilde{\psi}\big\|_{\infty} \Gamma(2H-1)}{\delta^{2H-1}}.
\end{align*}
We have
\begin{equation*}
\begin{array}{rl}
&\dfrac{1}{m(q,\rho)}\displaystyle\int_{-q}^{q}\Ex\left\|\displaystyle\int_{-\infty}^{t} U(t,s)\widetilde{\psi}(s,\Proba_{\vartheta^{\star}(s)})dB^H(s)\right\|^2\,\rho(t)\,dt\\\\
=&\dfrac{1}{m(q,\rho)}\displaystyle\int_{-q}^{q}B_{1}\displaystyle\int_{0}^{\infty}e^{-\delta v}\big\| \widetilde{\psi}(t-v,\Proba_{\vartheta^{\star}(t-v)})\big\|_{{\mathbb L}_2}v^{2H-1}\,dv\,\rho(t)\,dt\\\\
&+\dfrac{1}{m(q,\rho)}\displaystyle\int_{-q}^{q}B_{2}\displaystyle\int_{0}^{\infty}e^{-2\delta v}\big\| \widetilde{\psi}(t-v,\Proba_{\vartheta^{\star}(t-v)})\big\|_{{\mathbb L}_2}\,dv\,\rho(t)\,dt.
\end{array}
\end{equation*}
By the Fubini theorem, we get
\begin{equation*}
\begin{array}{rl}
&\dfrac{1}{m(q,\rho)}\displaystyle\int_{-q}^{q}\Ex\left\|\displaystyle\int_{-\infty}^{t} U(t,s)\widetilde{\psi}(s,\Proba_{\vartheta^{\star}(s)})dB^H(s)\right\|^2\,\rho(t)\,dt\\\\
=&B_{1}\displaystyle\int_{0}^{\infty}e^{-\delta v}v^{2H-1}\,dv\dfrac{1}{m(q,\rho)}\displaystyle\int_{-q}^{q}\big\| \widetilde{\psi}(t-v,\Proba_{\vartheta^{\star}(t-v)})\big\|_{{\mathbb L}_2}\,\rho(t)\,dt\\\\
&+B_{2}\displaystyle\int_{0}^{\infty}e^{-2\delta v}\,dv\dfrac{1}{m(q,\rho)}\displaystyle\int_{-q}^{q}\big\| \widetilde{\psi}(t-v,\Proba_{\vartheta^{\star}(t-v)})\big\|_{{\mathbb L}_2}\,\rho(t)\,dt.
\end{array}
\end{equation*}
Since $\rho \in \mathcal{M}^{inv}$ and $\widetilde{\psi}\in SBC_0(\mathbb{R},\rho)$, we obtain that for any $v\in \mathbb{R}$,
\begin{equation*}
\dfrac{1}{m(q,\rho)}\displaystyle\int_{-q}^{q}\big\| \widetilde{\psi}(t-v,\Proba_{\vartheta^{\star}(t-v)})\big\|_{{\mathbb L}_2}\,\rho(t)\,dt \to 0 \mbox{ as } q\to +\infty.
\end{equation*}

Then, by the Lebesgue dominated convergence theorem, we have

\begin{equation*}
\begin{array}{rl}
&B_{1}\displaystyle\int_{0}^{\infty}e^{-\delta v}v^{2H-1}\,dv\dfrac{1}{m(q,\rho)}\displaystyle\int_{-q}^{q}\big\| \widetilde{\psi}(t-v,\Proba_{\vartheta^{\star}(t-v)})\big\|_{{\mathbb L}_2}\,\rho(t)\,dt \to 0
\end{array}
\end{equation*}
and
\begin{equation*}
\begin{array}{rl}
&B_{2}\displaystyle\int_{0}^{\infty}e^{-2\delta v}\,dv\dfrac{1}{m(q,\rho)}\displaystyle\int_{-q}^{q}\big\| \widetilde{\psi}(t-v,\Proba_{\vartheta^{\star}(t-v)})\big\|_{{\mathbb L}_2}\,\rho(t)\,dt \to 0 \\\\
\end{array}
\end{equation*}
as $q\to +\infty$. Hence,

\begin{equation}\label{LIM2}
\lim\limits_{q\to +\infty}\dfrac{1}{m(q,\rho)}\displaystyle\int_{-q}^{q}\Ex\left\|\displaystyle\int_{-\infty}^{t} U(t,s)\widetilde{\psi}(s,\Proba_{\vartheta^{\star}(s)})dB^H(s)\right\|^2\,\rho(t)\,dt=0
\end{equation}

By  \eqref{LIM1} and \eqref{LIM2}, we deduce that \begin{equation*}
\lim\limits_{q\to +\infty} \dfrac{1}{m(q,\rho)}\displaystyle\int_{-q}^{q}\Ex\|\vartheta^\star_2(t)\|^2\rho(t)\,dt=0.
\end{equation*}
Therefore, $\vartheta^\star_2\in SBC_0(\mathbb{R},\rho)$. The proof is complete.

\end{proof}

\section{Examples}
\subsection*{Example 1}
 Let $(\Omega,\F,\mathbb{P},\mathcal{G}_{t})$ be a filtered probability space. $B^H(t)$  a cylindrical fractional Brownian motion with Hurst parameter $H\in (1/2, 1),$ and $\mathbb{W}(t)$ a two-sided standard one-dimensional Brownian motion independent of $B^H(t)$ on $L^2[0,1]$. Consider the following one-dimensional stochastic heat equation
\begin{equation}\label{Exp}
\left\{\begin{array}{rl}
du(t,x)&= \left[\dfrac{\partial^{2}}{\partial x^{2}} + \sin\left(\dfrac{1}{2+\sin(t)+\sin(\pi\,t)}\right) \right]u(t,x)dt \\\\

&+c_1\left[\dfrac{1}{2}\sin\left( \dfrac{1}{2+\cos(t)+\cos(\sqrt{2}t)}\right)\left( \dfrac{u(t,x)}{u^2(t,x)+1}+\mathcal{W}(\mathbb{P}_0,\Proba_{u(t,x)})\right)+ b(t)\cos(u)\right] dt \\\\
&+c_2 \left[\dfrac{1}{2}\sin\left( \dfrac{1}{2+\cos(t)+\cos(t\sqrt{3})}\right)\left(u(t,x)+\mathcal{W}(\mathbb{P}_0,\Proba_{u(t,x)})\right)+b(t)\sin(u(t,x))\right] d\mathbb{W}(t)\\\\
& +c_3\left[\dfrac{1}{2}\sin\left( \dfrac{1}{2+\cos(t)+\cos(t\sqrt{3})}\right)\mathcal{W}(\mathbb{P}_0,\Proba_{u(t,x)})\right]dB^{H}(t),\\\\
&\hspace{2cm} \mbox{ for all }(t,x) \in \mathbb{R}\times(0,1),\\
\\
u(t,0)&=u(t,1)=0 ~~\mbox{for}\quad t \in \mathbb{R},\\
\end{array}
\right.
\end{equation}

where $b(t)=t.1_{[0,1]}(t)+t.1_{[1,\infty)}(t)$, $1_{J}(\cdot)$ is a characteristic function on the interval $J$ and $c_i \;(i=1,2,3)$ are positive constants, $\mathbb{P}_0$ is a regular probability distribution on $L^{2}(0,1)$, and $\mathcal{W}$ is the Wasserstein distance.
\par In order to write the system \eqref{Exp} in the abstract form \eqref{C1}, we set $\Hilbert:=L^{2}(0,1)$ and consider the linear operator $A: D(A)\subset \Hilbert\rightarrow \Hilbert$, defined by
\begin{align*}
D(A)&=H^{2}(0,1)\cap H^{1}_{0}(0,1),\\
Az(\xi)&=z"(\xi) \quad\mbox{for}\quad \xi \in (0,1)\quad\mbox{and}\quad z \in D(A).
\end{align*}
It is well-known that A generates a $C_0$-semigroup $(R(t))_{t\geq0}$ on $\Hilbert$ that satisfies  $\|R(t)\|\leq e^{-\pi^{2}t}$ for all $t\geq0$.\\
Define a family of linear operator $A(t)$ as follows:
\begin{equation*}
\left\{\begin{array}{ll}
D(A(t))&=D(A),\\\\
A(t)z&=\left[A + \sin\left(\dfrac{1}{2+\sin(t)+\sin(\pi\,t)}\right)\right]z \quad \mbox{for} \quad z \in D(A).
\end{array}
\right.
\end{equation*}
Hence, $\{A(t), t \in \mathbb{R}\}$  generates an evolution family $\{U(t, s), t\ge s\}$ such that
\begin{align*}
U(t, s)z=R(t-s)\exp\Big[\int_{s}^{t}\sin\left(\dfrac{1}{2+\sin(r)+\sin(\pi\,r)}\right) dr \Big]z.
\end{align*}
Since $\|U(t, s)\|\leq e^{-(\pi^{2}- 1)(t-s)}$ for $t\geq s$ and $s,t \in \mathbb{R}$.\\
Choose $M=1$ and $\delta=\pi^2-1$. By the almost automorphic property of $\sin\left(\dfrac{1}{2+\sin(r)+\sin(\pi\,r)}\right)$ and
\begin{equation*}
\begin{array}{rl}
U(t+s_n, s+s_n)z=&R(t-s)\exp\Big[\displaystyle\int_{s+s_n}^{t+s_n}\sin\left(\dfrac{1}{2+\sin(r)+\sin(\pi\,r)}\right) dr \Big]z\\\\
=&R(t-s)\exp\Big[\displaystyle\int_{s}^{t}\sin\left(\dfrac{1}{2+\sin(r+s_n)+
	\sin(\pi\,(r+s_n))}\right)dr \Big]z,\\\\
\end{array}
\end{equation*}
we obtain that $U(t, s)z\in SBAA\left( \mathbb{R}\times \mathbb{R}, \mathcal{L}^2(\Proba,\Hilbert)\right)$ uniformly for all $z$ in any bounded subset of $\mathcal{L}^2(\Proba,\Hilbert)$. Define for all $\ell\in \Hilbert$, $\nu\in \Borel(\Hilbert),x\in(0,1)$ and $t\in \mathbb{R}$

\begin{equation*}
\begin{array}{rl}
f(t,\ell, \nu)(x)=&
\dfrac{c_1}{2}\sin\left( \dfrac{1}{2+\cos(t)+\cos(\sqrt{2}t)}\right)\left( \dfrac{\ell(x)}{\ell^2(x)+1}+\mathcal{W}(\mathbb{P}_0,\nu)\right)+ c_1b(t)\cos(\ell(x))\\\\
\psi(t,\nu)(x)=&\dfrac{c_3}{2}\sin\left( \dfrac{1}{2+\cos(t)+\cos(\sqrt{3}t)}\right)\mathcal{W}(\mathbb{P}_0,\Proba_{\nu(x)})\\ \\
\theta(t,\ell,\nu)(x)=&\dfrac{c_2}{2}\sin\left( \dfrac{1}{2+\cos(t)+\cos(t\sqrt{3})}\right)\left(\ell(x)+\mathcal{W}(\mathbb{P}_0,\Proba_{\nu(x)})\right)+c_2b(t)\sin(\ell(x))\\
\end{array}
\end{equation*}
Setting $\vartheta(t)(x)=u(t,x)
$, the system \eqref{Exp} can be rewritten in the abstract form
\begin{equation*}
d\vartheta(t)=A(t)\vartheta(t)dt + f(t, \vartheta(t),\Proba_{\vartheta(t)})\,dt + \theta(t, \vartheta(t),\Proba_{\vartheta(t)})\,d\mathbb{W}(t) +\psi(t,\Proba_{\vartheta(t)})\,dB^H(t),\quad t \in \mathbb{R}.
\end{equation*}
Choose $\rho(t)=e^{-t}$, then $\rho\in \mathcal{M}^{inv}$. It is easy to show that
$f$, $\psi$ and $\theta$ are square-mean weighted pseudo almost automorphic processes about $\rho(t)=e^{-t}$.
The functions $f$, $\psi$ and $\theta$ satisfies the global Lipschitz condition, with Lipschitz constants $\mathbf{K}=2\max\left\{ \dfrac{c_1^2}{4}+c_1^2,\;\dfrac{c_2^2}{4}+c_2^2,\; \dfrac{c_3}{2} \right\}$. By calculation and appropriate condition on $c_i\,( i=1,2,3)$ (say for $c_1,c_2,c_3$ are small enough) and the Hurst parameter $H$, conditions \eqref{Cond21} and \eqref{Cond21+} of Theorem \ref{Th2} hold and so \eqref{Exp} has a unique weighted pseudo almost automorphic solution in distribution.

\subsection*{Example 2}

Consider a
McKean-Vlasov autonomous stochastic evolution equation of the form

\begin{align}
dr(t)x&=\left(\dfrac{\partial}{\partial x}r(t)(x) +\left<\zeta_1(t, r(t),\Proba_{r(t)})(x),
\displaystyle\int_0^{|x|}\zeta_1(t, r(t),\Proba_{r(t)})(y)\right>_{\mathsf{V}}\right)
dt\notag\\
&\quad + <\zeta_2(t, r(t),\Proba_{r(t)})(x), dW(t)>_{\mathsf{V}},\quad t,x \in\Real,\label{MeanHJMM}
\end{align}
where $(\mathsf{V}, <\cdot, \cdot>_{\mathsf{V}})$ is a Hilbert separable space and $W$ is a two-side
$\mathsf{V}$-valued Wiener process on $(\Omega,\F,\mathbb{P},\mathcal{G}_{t})$ be a filtered probability space.

For some functions, $\zeta_1$ and $\zeta_2$ are specified below, and some certain spaces of functions, we shall prove the existence and the uniqueness of almost automorphic solution to problem \eqref{MeanHJMM}. Notice that a mean field Heath-Jarrow-Morton-Musiela (HJMM) equation fits perfectly in the framework of problem \eqref{MeanHJMM}. For more details, see \cite{Jarrow,Musiela}. We will analyse this equation, for certain functions $\zeta_1$ and $\zeta_2$.\\
For each $\nu>0$, let $L^2_{\nu}$ be the space of all (equivalence classes of) Lebesgue measurable functions
$u : \Real \rightarrow\Real$ such that
$$\int_{\Real}| u (x)|^2 e^{\nu x} dx < \infty.$$
It is well-known that for each $\nu \in\Real$, $L^2_{\nu}$ is a Hilberrt space endowed with the norm
$$ \| u\| _{\nu,2} =\left(
\int_{\Real}| u (x)|^2 e^{\nu x} dx\right)^{1/2}.$$
Set $\mathsf{U}=L^2_{\nu}$ and define the shift group $S$ as in the following lemma.
\begin{lemma}
 Let $S = \{S (t)\}_{t\in\Real}$ be a family of operators on $L^2_{\nu}$ defined by
$$S (t) u (x) = u (t + x),\quad u \in \mathsf{U} ,\quad t,x\in\Real.$$
Then, $S$ is a strongly continuous group on $\mathsf{U}$
such that
$$\|S (t)\|_{\Linear(L^2_{\nu}) } \leq e^{-(\nu t)/2},\quad t\in\Real.$$
Moreover, the infinitesimal generator $A$ of $S$ on
$\mathsf{U} \geq 0$ is given by
$$\mathsf{Dom}(A) =\left\{ u \in L^2_{\nu} : D u \in L^2_{\nu}\right\},\quad A u = D u, u \in \mathsf{Dom}(A),$$
where $D u$ is the first weak derivative of $u$ .
\end{lemma}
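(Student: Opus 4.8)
The plan is to establish the three assertions in turn, reducing the determination of the generator to the well-understood case of the unshifted translation group on $L^2(\Real)$ by means of an isometric change of variables. Throughout, write $\|\cdot\|_{\nu,2}$ for the norm of $L^2_{\nu}$ and let $\widetilde S(t)w(x)=w(t+x)$ denote the standard left-translation group on $L^2(\Real)$, whose generator is known to be the weak derivative $D$ with domain $H^1(\Real)=\{w\in L^2(\Real): Dw\in L^2(\Real)\}$.

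First I would verify the algebraic and metric properties by direct computation. The identity $S(0)=I$ and the group law $S(t)S(s)=S(t+s)$ follow at once from $\big(S(t)S(s)u\big)(x)=u(t+s+x)$. For the norm bound, the substitution $y=t+x$ gives
$$\|S(t)u\|_{\nu,2}^2=\int_\Real |u(t+x)|^2 e^{\nu x}\,dx=e^{-\nu t}\int_\Real |u(y)|^2 e^{\nu y}\,dy=e^{-\nu t}\|u\|_{\nu,2}^2,$$
so in fact $\|S(t)\|_{\Linear(L^2_{\nu})}=e^{-\nu t/2}$, which yields the stated inequality.

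The key device for the remaining two points is the multiplication operator $T:L^2_{\nu}\to L^2(\Real)$, $(Tu)(x)=e^{\nu x/2}u(x)$, which is an isometric isomorphism with inverse $(T^{-1}w)(x)=e^{-\nu x/2}w(x)$. A short computation gives the conjugation relation $TS(t)T^{-1}=e^{-\nu t/2}\widetilde S(t)$. Strong continuity of $S$ then follows from that of $\widetilde S$ together with the continuity of $t\mapsto e^{-\nu t/2}$, via $\|S(t)u-u\|_{\nu,2}\le e^{-\nu t/2}\|\widetilde S(t)Tu-Tu\|_{L^2(\Real)}+|e^{-\nu t/2}-1|\,\|Tu\|_{L^2(\Real)}\to 0$ as $t\to0$; invertibility ($S(t)^{-1}=S(-t)$) then makes $S$ a $C_0$-group. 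For the generator, multiplying the $C_0$-group $\widetilde S$ by the scalar exponential $e^{-\nu t/2}$ shifts its generator by $-\nu/2$, so $e^{-\nu t/2}\widetilde S(t)$ has generator $D-\tfrac{\nu}{2}I$ on $H^1(\Real)$; conjugating by $T$ gives $A=T^{-1}\big(D-\tfrac{\nu}{2}I\big)T$ with $\mathsf{Dom}(A)=T^{-1}\big(H^1(\Real)\big)$. Evaluating, $\big(D-\tfrac{\nu}{2}I\big)(Tu)=e^{\nu x/2}u'=T(Du)$, whence $Au=Du$; and $Tu\in H^1(\Real)$ is equivalent to $u\in L^2_{\nu}$ with $Du\in L^2_{\nu}$, which is exactly the claimed domain.

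The main obstacle is the generator identification, and specifically the two transformation facts I would need to invoke with care: that conjugation by an isometric isomorphism transports a generator by the same conjugation, so that $\mathsf{Dom}(A)=T^{-1}(\mathsf{Dom}(D-\tfrac{\nu}{2}I))$, and that rescaling a $C_0$-group by a scalar exponential translates the generator by the corresponding constant. Both are standard results for $C_0$-semigroups, but they must be applied with attention to the precise domains; the manipulation $\big(e^{\nu x/2}u\big)'=e^{\nu x/2}(u'+\tfrac{\nu}{2}u)$ is where the weight interacts with differentiation and where the equivalence of the two descriptions of $\mathsf{Dom}(A)$ is genuinely proved. Alternatively, one can dispense with $T$ and argue directly that $u\in\mathsf{Dom}(A)$ iff the difference quotients $t^{-1}(S(t)u-u)$ converge in $L^2_{\nu}$, identifying the limit with the weak derivative through the integrated relation $S(t)u-u=\int_0^t S(s)Au\,ds$ tested against functions in $C_c^\infty(\Real)$.
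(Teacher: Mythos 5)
Your proof is correct, but it is substantially more complete than, and structurally different from, the paper's own argument. The paper proves only the exponential bound, by exactly the change of variables you use ($y=t+x$ gives $\|S(t)u\|_{\nu,2}^2=e^{-\nu t}\|u\|_{\nu,2}^2$), and then disposes of strong continuity with a citation ("following Ahmed, it is clear that $S$ is a strongly continuous group"); the identification of the generator $A=D$ with domain $\{u\in L^2_\nu : Du\in L^2_\nu\}$ is not proved in the paper at all. Your conjugation device — the isometric isomorphism $T:L^2_\nu\to L^2(\Real)$, $(Tu)(x)=e^{\nu x/2}u(x)$, with $TS(t)T^{-1}=e^{-\nu t/2}\widetilde S(t)$ — is the genuinely new content: it reduces both strong continuity and the generator computation to the standard translation group on $L^2(\Real)$, and the product-rule computation $\bigl(D-\tfrac{\nu}{2}I\bigr)(Tu)=T(Du)$ together with $\mathsf{Dom}(A)=T^{-1}\bigl(H^1(\Real)\bigr)$ is precisely the step that makes the stated domain rigorous, which is the part the paper leaves implicit. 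What the paper's approach buys is brevity at the cost of self-containedness; what yours buys is a complete, checkable proof in which the only imported facts are two standard semigroup lemmas (similarity transforms transport generators, and scalar rescaling $e^{\lambda t}S(t)$ shifts the generator by $\lambda I$), both of which can be found in the Engel--Nagel reference the paper already cites. One small point of care, which you flag yourself: the equivalence $Tu\in H^1(\Real)\iff u\in L^2_\nu,\ Du\in L^2_\nu$ uses that multiplication of a weakly differentiable function by the smooth function $e^{\nu x/2}$ obeys the product rule in the weak sense; this is standard but is exactly where the weight interacts with the derivative, so it deserves the explicit mention you give it.
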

\begin{proof}
Following \cite{Ahmad}, it is clear that $S$ is a strongly continuous group on $\mathsf{U}$. For $u\in\mathsf{U}$, we have
\begin{align*}
\|S (t)u\|_{L^2_{\nu}}^2&=\int_{\Real}|(S(t)u) (x)|^2 e^{\nu x} dx\\
&=\int_{\Real}|(u(t+x)|^2 e^{\nu x} dx\\
&=e^{-\nu t}\int_{\Real}|(u(x)|^2 e^{\nu x} dx\\
&= e^{-\nu t}\|u\|_{\mathsf{U}}^2
\end{align*}
From this, the exponential bound follows.
\end{proof}
Assume that function $\zeta$ in equation \eqref{MeanHJMM} is defined by
\begin{align*}
&\zeta_1(t, u, \mu )(x) = \,\int_{\mathsf{U}}\Phi_1(t)g(x, u(x),z)d\mu(z),\quad u \in L^2_{\nu},\quad
t, x \in\Real,\mu\in\Borel(\mathsf{U})\\
&\zeta_2(t, u, \mu )(x)=\Phi_2(t)\,g(x, u(x),0),\quad u \in L^2_{\nu},\quad
t, x \in\Real,\mu\in\Borel(\mathsf{U})
\end{align*}
where $\Phi_1(t)=\cos\left( \dfrac{1}{2+\sin(t)+\sin(\sqrt{3}t)}\right)$, $\Phi_2(t)=\cos\left( \dfrac{1}{2+\sin(t)+\sin(\sqrt{2}t)}\right)$ and $g : \Real^3 \rightarrow \mathsf{V}$ is a given function .
 For $\mu\in\Borel(\mathsf{U}),u \in L^2_{\nu} ,t, x \in \Real$ and $v\in\mathsf{V}$, define
\begin{align}
\label{Gs}
\theta(t, u,\mu )[v](x) = \left<\Phi_2(t)g(x, u(x),0), v\right>_{\mathsf{V}},\quad u \in L^2_{\nu} ,\quad
t, x \in\Real,
\end{align}
and
\begin{align}
\label{Fs}
f(t, u,\mu )(x) =\left< \int_{\mathsf{U}}\Phi_1(t)g(x, u(x),z)d\mu(z),\int_0^{|x|}\int_{\mathsf{U}}\Phi_1(t)g(y, u(y),z)d\mu(z)dy\right>_{\mathsf{V}},
\end{align}

Then, the abstract form of equation \eqref{MeanHJMM} be can written as follows

 \begin{equation}\label{Formulae}
 d\,r(t)= Ar(t)dt+f(t,r(t),\Proba_{r(t)})dt+\theta(t,r(t),\Proba_{r(t)})dW(t), \quad t\in\Real,
 \end{equation}
which is the equation \eqref{C1} with $\psi\equiv0$.
\begin{theorem} Assume that $\nu>0$.  Let $f$ and $\theta$ be as given in \eqref{Fs} and \eqref{Gs}, respectively. Assume that there exist functions
$g_1 \in L^2_{\nu}$ and $g_2 \in L^2_{\nu} \cap L_{\infty}$ such that
$$|g(x,y,z)|_{\mathsf{V}} \leq |g_1(x)|,\quad x,y,z \in\Real,$$
and
$$|g(x, y_1, z) - g(x, y_2, z)|_{\mathsf{V}} \leq |g_2(x)|\left( |y_1 - y_2|+|z_1 - z_2|\right),\quad x,y_1, y_2,z_1,z_2 \in \Real.$$
Then, there exists a unique $L^2_{\nu}$-valued bounded almost automorphic in distribution mild solution $r$ to equation \eqref{Formulae}.

\end{theorem}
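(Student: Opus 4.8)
The plan is to read \eqref{Formulae} as the abstract equation \eqref{C1} with $\psi\equiv0$ and to verify that the data $f,\theta$ meet the standing hypotheses $\mathbf{(H_0)}$, $\mathbf{(H_1)}$, $\mathbf{(H_2)}$; the conclusion then follows by a direct appeal to Theorem \ref{Th1}. For the structural hypothesis $\mathbf{(H_0)}$, the generator $A$ is autonomous and generates the shift group $S$, so $U(t,s)=S(t-s)$ with $\|U(t,s)\|\le e^{-(\nu/2)(t-s)}$ for $t\ge s$; hence I take $M=1$ and $\delta=\nu/2$, and the evolution family is exponentially stable, so hyperbolic with $P(t)=I$. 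Because the family is autonomous, $U(t+s_n,s+s_n)=S(t-s)=U(t,s)$ is invariant under shifts, whence $U(t,s)z\in SBAA$ trivially (with limit equal to $U(t,s)z$ itself), and $\mathbf{(H_0)}$ holds.

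Next I would verify $\mathbf{(H_1)}$. Since $\psi\equiv0$ its almost automorphy is vacuous, so only $f$ and $\theta$ remain. The scalar multipliers $\Phi_1(t)=\cos\!\big(1/(2+\sin t+\sin\sqrt3\,t)\big)$ and $\Phi_2(t)=\cos\!\big(1/(2+\sin t+\sin\sqrt2\,t)\big)$ are almost automorphic functions of $t$, and the growth bound $|g(x,y,z)|_{\mathsf V}\le|g_1(x)|$ with $g_1\in L^2_\nu$ guarantees that the $\mathsf V$-valued integrals defining $f$ and $\theta$ take values in $L^2_\nu$ and depend continuously on their arguments. A composition argument, exploiting the $g_1$-domination together with the $g_2$-Lipschitz control in the second and third entries of $g$, then transfers the almost automorphy of $\Phi_1,\Phi_2$ to square-mean almost automorphy of $f$ and $\theta$, establishing $\mathbf{(H_1)}$.

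The substantive step is $\mathbf{(H_2)}$, and this is where I expect the main obstacle. For $\theta$ the estimate is direct: since $\theta(t,u,\mu)[v](x)=\langle\Phi_2(t)g(x,u(x),0),v\rangle_{\mathsf V}$ carries no $\mu$-dependence, its Hilbert--Schmidt norm difference is controlled by $\|g_2\|_{\nu,2}^2\,\|u_1-u_2\|_{\nu,2}^2$, with zero Wasserstein contribution. The delicate term is $f$, because of its bilinear inner-product form and the nonlocal primitive $\int_0^{|x|}(\cdots)\,dy$ read against the weight $e^{\nu x}$. Writing $F_i(t,u,\mu)(x)=\int_{\mathsf U}\Phi_i(t)g(x,u(x),z)\,d\mu(z)$ and $G(t,u,\mu)(x)=\int_0^{|x|}F_1(t,u,\mu)(y)\,dy$, I would split $f(t,u_1,\nu_1)-f(t,u_2,\nu_2)$ through the cross term $\langle F_1^{(1)},G^{(2)}\rangle$, bounding one factor in $\sup_z$ by $|g_1|$ and the other by $|g_2|$; the $\mu$-dependence is then absorbed through the $z$-Lipschitz estimate $|g(x,y,z_1)-g(x,y,z_2)|_{\mathsf V}\le|g_2(x)|\,|z_1-z_2|$ and the coupling definition of $\mathcal W$. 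Controlling the primitive $G$ in the weighted norm requires a Hardy-type inequality adapted to $e^{\nu x}$ (using $g_1,g_2\in L^2_\nu$ and $g_2\in L_\infty$), which is the technical crux; it yields $\mathbf{(H_2)}$ with an explicit constant $\mathbf K=\mathbf K(\|g_1\|_{\nu,2},\|g_2\|_{\nu,2},\|g_2\|_\infty,\nu)$.

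Finally, because $\psi\equiv0$ the fractional Brownian contribution drops out of every estimate in Theorem \ref{Th1}: all terms carrying the factor $H(2H-1)$ vanish, so in particular $\beta_2$ reduces to $\beta_1=(3+\widetilde{C_2})M^2\mathbf K/\delta$ in \eqref{beta12}, and the last summand of \eqref{Cond2} disappears. With $M=1$ and $\delta=\nu/2$, the remaining requirements \eqref{Cond1}, \eqref{Cond1+}, \eqref{Cond2} collapse to a smallness condition on $\mathbf K$ relative to $\nu$, which is guaranteed by the smallness of $\|g_2\|$ (equivalently, by choosing the parameters so that $\mathbf K$ is small enough). Invoking Theorem \ref{Th1} then furnishes a unique $\mathcal L^2$-bounded mild solution $r$ that is almost automorphic in distribution, which completes the proof.
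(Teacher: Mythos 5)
Your proposal is correct and takes essentially the same route as the paper: both reduce \eqref{Formulae} to \eqref{C1} with $\psi\equiv 0$, take $U(t,s)=S(t-s)$ with $M=1$ and $\delta=\nu/2$, verify the Lipschitz hypothesis by splitting $f(t,u_1,\mu_1)-f(t,u_2,\mu_2)$ into cross terms controlled by the $g_1$-domination and the $g_2$-Lipschitz bound, and then invoke Theorem \ref{Th1} under smallness of the resulting constant $\mathbf{K}$ (the paper takes $\mathbf{K}=\max(\mathbf{K}_1,\mathbf{K}_2)$ with $\mathbf{K}_1=\tfrac{8}{\nu}\|g_2\|_{L_\infty}^2\|g_1\|_{L^2_\nu}^2$, $\mathbf{K}_2=\|g_2\|_{L_\infty}^2$). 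The only cosmetic differences are that the paper bounds the primitive $\int_0^{|x|}|g_1(y)|\,dy$ by a direct weighted Cauchy--Schwarz estimate ($\le \nu^{-1/2}\|g_1\|_{L^2_\nu}$) rather than a Hardy-type inequality, and that you are actually more careful than the paper about the $\mu$-dependence (the paper's decomposition silently suppresses it and the Wasserstein term is appended gratuitously at the end), while the paper, like you, leaves the verification of $\mathbf{(H_0)}$ and $\mathbf{(H_1)}$ implicit.
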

\begin{proof}
We have that
\begin{align*}
&f(t, u_1,\mu_1 )(x)-f(t, u_2,\mu_2 )(x)\\
 &=\left< \int_{\mathsf{U}}[\Phi_1(t)g(x, u_1(x),z)-\Phi_1(t)g(x, u_2(x),z)]d\mu_1(z),\int_0^{|x|}\int_{\mathsf{U}}\Phi_1(t)g(y, u_1(y),z)d\mu_2(z)dy\right>_{\mathsf{V}}\\
&\quad+\left< \int_{\mathsf{U}}\Phi_1(t)g(x, u_2(x),z)d\mu_1(z),\int_0^{|x|}\int_{\mathsf{U}}[\Phi_1(t)g(y, u_1(y),z)-\Phi_1(t)g(y, u_2(y),z)]d\mu_2(z)dy\right>_{\mathsf{V}}.
\end{align*}
It follows by the Cauchy-Schwarz and H\"older inequalities that
\begin{align*}
&|f(t, u_1,\mu_1 )(x)-f(t, u_2,\mu_2 )(x)|\\
&\leq|g_2(x)||u_1(x)-u_2(x)|\int_0^{|x|}|g_1(y)|dy+|g_1(x)|\int_0^{|x|}|g_2(y)||u_1(y)-u_2(y)|dy\\
&\leq\|g_2\|_{L_{\infty}}|u_1(x)-u_2(x)|\int_0^{\infty}|g_1(y)|dy+|g_1(x)|\int_0^{\infty}\|g_2\|_{L_{\infty}}|u_1(y)-u_2(y)|dy\\
&\leq\|g_2\|_{L_{\infty}}|u_1(x)-u_2(x)|\int_0^{\infty}|g_1(y)|e^{(\nu y)/2}e^{-(\nu y)/2}dy\\
&\quad+\|g_2\|_{L_{\infty}}|g_1(x)|\int_0^{\infty}|u_1(y)-u_2(y)|e^{(\nu y)/2}e^{-(\nu y)/2}dy\\
&\leq\|g_2\|_{L_{\infty}}\|g_1\|_{L^2_{\nu}}|u_1(x)-u_2(x)|\left(\int_0^{\infty}e^{-\nu y}dy\right)^{1/2}\\
&\quad+\|g_2\|_{L_{\infty}}|g_1(x)|\|u_1-u_2\|_{L^2_{\nu}} \left(\int_0^{\infty}e^{-\nu y}dy\right)^{1/2}\\
&\leq\frac{1}{\sqrt{\nu}}\|g_2\|_{L_{\infty}}\|g_1\|_{L^2_{\nu}}|u_1(x)-u_2(x)|+\frac{1}{\sqrt{\nu}}\|g_2\|_{L_{\infty}}\|u_1-u_2\|_{L^2_{\nu}}|g_1(x)|
\end{align*}
We deduce from this that
\begin{align*}
&\|f(t, u_1,\mu_1 )-f(t, u_2,\mu_2 )\|_{L^2_{\nu}}\leq2\sqrt{\frac{2}{\nu}}\|g_2\|_{L_{\infty}}\|g_1\|_{L^2_{\nu}}\|u_1 -u_2\|_{L^2_{\nu}}
\end{align*}
Thus,
\begin{align*}
\|f(t, u_1,\mu_1 )-f(t, u_2,\mu_2 )\|_{L^2_{\nu}}^2&\leq\frac{8}{\nu}\|g_2\|_{L_{\infty}}^2\|g_1\|_{L^2_{\nu}}^2\|u_1 -u_2\|_{L^2_{\nu}}^2\\
&\leq\frac{8}{\nu}\|g_2\|_{L_{\infty}}^2\|g_1\|_{L^2_{\nu}}^2 \left\{\|u_1 -u_2\|_{L^2_{\nu}}^2+\mathcal{W}^2(\mu_1,\mu_2)\right\}\\
&=:\mathbf{K}_1\left\{\|u_1 -u_2\|_{L^2_{\nu}}^2+\mathcal{W}^2(\mu_1,\mu_2)\right\}.
\end{align*}
where $\mathbf{K}_1=\frac{8}{\nu}\|g_2\|_{L_{\infty}}^2\|g_1\|_{L^2_{\nu}}^2.$\\
Note that  $v\mapsto\theta(t, u,\mu )[v]$ is a bounded linear map for $(t, u,\mu )\in\Real^2\times\Borel(\mathsf{U})$ and
\begin{align*}
&\theta(t, u_1,\mu_1 )[v](x)-\theta(t, u_2,\mu_2 )[v](x)\\
&=\left<\int_{\mathsf{U}}\Phi_2(t)g(x, u_1(x),z)d\mu_1(z), v\right>_{\mathsf{V}}-\left<\int_{\mathsf{U}}\Phi_2(t)g(x, u_2(x),z)d\mu_1(z), v\right>_{\mathsf{V}}\\
&+\left<\int_{\mathsf{U}}\Phi_2(t)g(x, u_2(x),z)d\mu_1(z), v\right>_{\mathsf{V}}-\left<\int_{\mathsf{U}}\Phi_2(t)g(x, u_2(x),z)d\mu_2(z), v\right>_{\mathsf{V}}
\end{align*}
Applying Cauchy-Schwarz inequality yields
\begin{align*}
&|\theta(t, u_1,\mu_1 )[v](x)-\theta(t, u_2,\mu_2 )[v](x)|\\
&\leq\left|\left< (\Phi_2(t)g(x, u_1(x),0)-\Phi_2(t)g(x, u_2(x),0)), v\right>_{\mathsf{V}}\right|\\
&\leq|g_2(x)|| u_1(x)- u_2(x)| \|v\|_{\mathsf{V}}\\
&\leq\|g_2\|_{L_{\infty}} \|v\|_{\mathsf{V}}| u_1(x)- u_2(x)|.
\end{align*}
Then it follows that
\begin{align*}
\|\theta(t, u_1,\mu_1 )-\theta(t, u_2,\mu_2 )\|_{L^2_{\nu}}^2&\leq\|g_2\|_{L_{\infty}}^2\| u_1- u_2\|_{L^2_{\nu}}^2\\
&\leq\|g_2\|_{L_{\infty}}^2 \left\{\|u_1 -u_2\|_{L^2_{\nu}}^2+\mathcal{W}^2(\mu_1,\mu_2)\right\}\\
&=:\mathbf{K}_2\left\{\|u_1 -u_2\|_{L^2_{\nu}}^2+\mathcal{W}^2(\mu_1,\mu_2)\right\}.
\end{align*}

Therefore, we set $\mathbf{K}=\max\left(\mathbf{K}_1,\mathbf{K}_2\right) M=1$ and $ \delta=\frac{\nu}{2}$.
For $\|g_2\|_{L_{\infty}}^2$ and $\|g_1\|_{L^2_{\nu}}^2$ small enough and $\nu$ big enough, the conditions \eqref{Cond1}-\eqref{Cond2} hold. So by Theorem \ref{Th1}, Then, Equ.\eqref{Formulae} has a unique $\mathcal{L}^2$-bounded solution  almost automorphic in distribution.
\end{proof}

\end{document}